\newtheorem{theorem}{Theorem}
\newtheorem{lemma}[theorem]{Lemma}
\newtheorem{corollary}[theorem]{Corollary}
\newtheorem{proposition}[theorem]{Proposition}
\newtheorem{lettertheorem}{Theorem}
\theoremstyle{definition}
\theoremstyle{remark}
\numberwithin{equation}{section}
\newcommand{\set}[1]{\left\{{#1}\right\}}
\newcommand{\measure}[1]{\mathrm{m} \left({#1}\right)}
\newcommand{\hausdorff}[2]{\mathcal{H}^{#1}\left({#2}\right)}
\newcommand{\dimension}[1]{\mathrm{dim}\left({#1}\right)}
\newcommand{\support}[1]{\mathrm{supp}\left({#1}\right)}
\newcommand{\mathdm}{\mathrm{dm}}
\newcommand{\cF}{\mathcal{F}}
\newcommand{\cG}{\mathcal{G}}
\newcommand{\e}{\varepsilon}
\newcommand{\f}{\varphi}
\newcommand{\disk}{\mathbb{D}}
\newcommand{\ddisk}{\partial\disk}
\newcommand{\complex}{\mathbb{C}}
\newcommand{\re}[1]{\mathrm{Re} \left({#1}\right)}
\newcommand{\Lp}[1]{\mathrm{L}^{#1}}
\newcommand{\wt}{\widetilde}
\newcommand{\ci}[1]{_{ {}_{\scriptstyle #1}}}
\newcommand{\Addresses}{{
  \bigskip
  \footnotesize

  \noindent Spyridon~Kakaroumpas\\
  \textsc{Julius-Maximilians-Universität Würzburg,\\
          Campus Hubland Nord,\\
          Emil-Fischer-Stra{\ss}e 41,\\
          97074 W\"{u}rzburg, Germany}\\
  \textit{E-mail address: } \texttt{spyridon.kakaroumpas@uni-wuerzburg.de}

  \bigskip

  \noindent Odí~Soler~i~Gibert\\
  \textsc{Universitat Politècnica de Catalunya - BarcelonaTech (UPC),\\
          Pavelló I,\\
          Diagonal 647,\\
          08028 Barcelona, Catalunya}\\
  \textit{E-mail address: }\texttt{odi.soler@upc.edu}

}}
\begin{document}

    \title{Preimages under linear combinations of iterates of finite Blaschke products}

    \author{Spyridon~Kakaroumpas\thanks{First author is supported by Alexander von Humboldt Stiftung.} \
            and Odí~Soler~i~Gibert\thanks{Second author is supported in part by the ERC project CHRiSHarMa no.~DLV-682402,
            in part by the Generalitat de Catalunya (grant 2021 SGR 00071),
            the Spanish Ministerio de Ciencia e Innovaci\'on (project  PID2021-123151NB-I00)
            and the Spanish Research Agency (Mar\'ia de Maeztu Program CEX2020-001084-M).}}

    \date{}

    \maketitle

    \begin{abstract}
        Consider a finite Blaschke product $f$ with $f(0) = 0$
        which is not a rotation and denote by $f^n$ its $n$-th iterate.
        Given a sequence $\{a_n\}$ of complex numbers, consider the series $F(z) = \sum_n a_n f^n(z).$
        We show that for any $w \in \complex,$ if $\{a_n\}$ tends to zero but $\sum_n |a_n| = \infty,$
        then the set of points $\xi$ in the unit circle for which the series $F(\xi)$ converges to $w$
        has Hausdorff dimension $1.$
        Moreover, we prove that this result is optimal in the sense that
        the conclusion does not hold in general if one considers Hausdorff measures
        given by any measure function more restrictive than the power functions $t^\delta,$ $0 < \delta < 1.$
    \end{abstract}

    \section{Introduction}
    Consider a lacunary power series with Hadamard gaps,
    that is a power series of the form
    \begin{equation*}
        F(z) = \sum_{n=1}^\infty a_n z^{k_n},
    \end{equation*}
    where $\{a_n\} \subseteq \complex$ and $\{k_n\}$ is a sequence of positive integers satisfying the property
    $\liminf_{n} k_{n+1}/k_{n} > 1.$
    Weiss proved \cite{ref:WeissTheoremOfPaley} a theorem of Paley which states that if $\{a_n\}$ is such that
    $\sum_n |a_n|$ is divergent but $a_n \to 0$ as $n \to \infty,$ then for any point $w \in \complex$
    there exists some point $\xi \in \ddisk$ for which the series $\sum a_n \xi^{k_n}$ is convergent with value $w.$
    In a similar direction, Salem and Zygmund \cite{ref:SalemZygmundLacunarySeriesPeanoCurves} proved that
    boundary values of certain lacunary series are Peano curves.
    This result was refined later on by Kahane, Weiss and Weiss \cite{ref:KahaneWeissWeissLacunarySeries}.
    For more recent results on this topic, see
    \cite{ref:BaranskiLacunarySeries,ref:Belov,ref:Murai1981,ref:Murai1982,ref:Murai1983,ref:YounsiPeanoCurves}.

    Consider now a finite Blaschke product $f$ such that $f(0) = 0,$ that is a function of the form
    \begin{equation*}
        f(z) = \alpha z^m \prod_{n=1}^N \frac{z-z_n}{1-\overline{z_n}z}, \qquad z \in \disk,
    \end{equation*}
    where $\alpha$ is a unimodular constant,
    $m \geq 1$ is the multiplicity of the zero at the origin
    and $\{z_n\}_{n=1}^N$ is a finite possibly empty set of points in $\disk$
    (if it is the empty set, we ignore the rightmost product).
    We will denote by $f^n$ the $n$-th iterate of $f$ on the unit disk,
    that is $f^n = f \circ \dots \circ f.$
    In this article, we will focus on series of the form
    \begin{equation}
     \label{eq:IteratesSeries}
     F(z) = \sum_{n=1}^\infty a_n f^n(z), \qquad z \in \disk,
    \end{equation}
    where $f$ is a finite Blaschke product fixing the origin which is not a rotation
    and $\{a_n\} \subseteq \complex.$
    Recent results show that linear combinations of the form \eqref{eq:IteratesSeries}
    behave similarly to lacunary series (see for instance \cite{ref:NicolauSolerCLT,ref:NicolauConvergenceIterates,ref:DonaireNicolau};
    some of these results hold more generally for iterates of arbitrary inner functions).
    In particular, Donaire and Nicolau showed \cite{ref:DonaireNicolau} the following version of Paley--Weiss Theorem for finite Blaschke products.
    \begin{lettertheorem}[Donaire--Nicolau, \cite{ref:DonaireNicolau}]
        \label{thm:DonaireNicolauPaleyWeissThm}
        Let $f$ be a finite Blaschke product with $f(0) = 0$ which is not a rotation.
        Consider a sequence $\{a_n\}$ of complex numbers tending to $0$ and such that $\sum_n |a_n| = \infty.$
        Then for any $w \in \complex$ there exists $\xi \in \ddisk$ such that $\sum_n a_n f^n(\xi)$ converges
        and $\sum_n a_n f^n(\xi) = w.$
    \end{lettertheorem}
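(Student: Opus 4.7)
The plan is to construct $\xi$ as the unique intersection of a nested sequence of closed arcs $I_0 \supset I_1 \supset I_2 \supset \cdots$ on $\ddisk$, arranged so that the partial sums $S_N(\xi) := \sum_{n=1}^N a_n f^n(\xi)$ converge to $w$. This Cantor-type construction parallels the classical proof of Paley's theorem for lacunary series, with the monomials $z^{k_n}$ replaced by the iterates $f^n$.

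The key technical input is an expansion and mixing property of iterates on the boundary. Since $f(0) = 0$ and $f$ is not a rotation, its degree $d$ is at least $2$, and the restriction $f|_{\ddisk}$ is a smooth $d$-to-$1$ self-covering of $\ddisk$ which is eventually expanding in an averaged sense (positive Lyapunov exponent, since $\int_{\ddisk} \log |f'| \, d\sigma > 0$). Consequently, given any arc $I \subseteq \ddisk$, the iterates $f^n(I)$ eventually wrap around $\ddisk$ arbitrarily many times; together with a bounded-distortion estimate for univalent branches of $f^{-n}$, this yields quantitative control on the length of preimages of prescribed sub-arcs.

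With this tool the inductive step reads as follows. Suppose we are given a closed arc $I_k \subseteq \ddisk$, an integer $N_k$, and a value $v_k$ such that $|S_{N_k}(\xi) - v_k| < \e_k$ for every $\xi \in I_k$. We select $N_{k+1} > N_k$ large and a sub-arc $I_{k+1} \subseteq I_k$ so that: (i) the length of $I_{k+1}$ is small enough to force $S_{N_{k+1}}$ to be nearly constant on $I_{k+1}$, equal to some value $v_{k+1}$; (ii) the correction $\sum_{n=N_k+1}^{N_{k+1}} a_n f^n(\xi)$ drives $v_{k+1}$ within $\e_{k+1}$ of $w$; and (iii) every intermediate partial sum $S_n(\xi)$ with $N_k < n \leq N_{k+1}$ and $\xi \in I_{k+1}$ differs from $v_k$ by at most a quantity tending to $0$. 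The hypothesis $\sum_n |a_n| = \infty$ supplies the ``power'' needed to realize the correction in (ii) on a sufficiently long block, while $a_n \to 0$ is what keeps the drift in (iii) under control.

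The main obstacle is executing step (ii) quantitatively: one must steer the vector $\sum_{n=N_k+1}^{N_{k+1}} a_n f^n(\xi)$ toward a prescribed target by choosing $\xi$ appropriately inside $I_k$. The natural strategy is to perform the correction one index at a time, at each stage replacing the current arc by a sub-arc on which $f^n(\xi)$ lies in a carefully chosen small window of $\ddisk$, then bookkeeping that the expansion and distortion estimates from the previous paragraph leave enough room so that the nested arcs still have non-empty intersection. A final verification using (iii) together with $a_n \to 0$ then shows that $S_N(\xi) \to w$ for the whole sequence of $N$, not only along the subsequence $(N_k)_k$.
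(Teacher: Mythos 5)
Your overall architecture — nested arcs, block corrections, $\sum|a_n|=\infty$ supplying the power and $a_n\to 0$ taming the drift — matches the Weiss/Donaire--Nicolau scheme that the paper refines. But the place you yourself flag as ``the main obstacle,'' step~(ii), is precisely where your proposal breaks down, and the resolution you sketch (steering ``one index at a time'' by confining $f^{n}(\xi)$ to a chosen small window of $\ddisk$) does not work for iterates of a finite Blaschke product. The problem is that the iterates are \emph{not} independent: once $f^{n}(\xi)$ is confined to an arc $W_{n}$ of length $\delta$, the next iterate $f^{n+1}(\xi)=f(f^{n}(\xi))$ is already forced to lie in $f(W_{n})$, an arc of length roughly $K\delta$ where $K=\min_{\ddisk}|f'|>1$. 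If $K\delta<1$ this arc does not wrap around $\ddisk$, so you have no freedom to center $W_{n+1}$ where you need it; you must wait $m\approx \log(1/\delta)/\log K$ iterates before the image covers the circle and you may again prescribe a window. During those $m$ waiting iterates you exert no directional control. Thus, within a block, you can only steer a subsequence of density $\sim 1/m$, and the controlled gain is at best $\sim m^{-1}\cos(\pi\delta)\sum_{\mathrm{block}}|a_n|$ while the uncontrolled terms can (in the worst case) drift by up to $(1-1/m)\sum_{\mathrm{block}}|a_n|$. No choice of $\delta$ and $m$ makes the gain beat the drift deterministically — you are in the Hadamard-gap regime, not the large-gap regime $k_{n+1}/k_n\to\infty$ where such independent-window steering actually works.

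What the paper (following Donaire--Nicolau, and ultimately Weiss) uses to close exactly this gap is a probabilistic concentration estimate rather than a deterministic window argument: Nicolau's harmonic-measure lemma (Lemma~\ref{lemma:ArturLemma} and its localized form Corollary~\ref{corl:LocalisedArturLemma}). It asserts that on a set of $z$-harmonic measure bounded below by a constant depending only on $f$, the entire block sum satisfies
$\re\bigl(\sum_{n=M}^{N}a_nf^{n}(\xi)\bigr)\geq c\bigl(\sum_{n=M}^{N}|a_n|^2\bigr)^{1/2}.$
This gives a guaranteed positive contribution exploiting \emph{all} indices in the block simultaneously (not a sparse subsequence), and the comparison between $\bigl(\sum|a_n|^2\bigr)^{1/2}$ and $\sum|a_n|$ is then handled by the block-selection and reverse Cauchy--Schwarz bookkeeping in Section~\ref{sec:Construction}. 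That estimate — or some equivalent small-ball/Salem--Zygmund type bound reflecting the near-orthogonality of $\{f^{n}\}$ on $\ddisk$ — is the indispensable ingredient that your expansion/mixing/bounded-distortion toolkit, correct as those observations are, cannot replace.
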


    The main goal of this study is to extend Theorem~\ref{thm:DonaireNicolauPaleyWeissThm} to a quantitative version.
    To this end, we recall some basic definitions and facts about
    Hausdorff measure and dimension.
    For a given arc $I \subseteq \ddisk,$ we denote its normalised length by $\measure{I}.$
    We will consider a measure function to be a function $\varphi\colon [0,+\infty) \longrightarrow [0,+\infty)$ with $\varphi(0) = 0$ which is strictly increasing and continuous from the right.
    Given a measure function $\varphi$ and a subset $E \subseteq \ddisk,$
    we define its $\varphi$-Hausdorff (outer) measure as
    \begin{equation*}
     \hausdorff{\varphi}{E}
     = \lim_{\delta \to 0^+} \inf \left\{\sum_{n=1}^\infty \varphi(\measure{I_n})\colon
     E\subseteq\bigcup_{n=1}^\infty I_n,\,
     \measure{I_n} < \delta \textrm{ for } n \geq 1\right\},
    \end{equation*}
    where the infimum is taken over all countable covers $\lbrace I_n\rbrace$ of $E$ by arcs of length less than $\delta.$
    If we consider two different measure functions $\varphi_1$ and $\varphi_2$
    that satisfy $\lim_{t \to 0^+} \varphi_1(t)/\varphi_2(t) = 0,$
    we will say that $\varphi_1$ is \emph{more restrictive} than $\varphi_2$
    to emphasize the idea that $\hausdorff{\varphi_2}{E} = 0$
    implies $\hausdorff{\varphi_1}{E} = 0$ given any subset $E \subseteq \ddisk.$
    It is a standard fact that the $\sigma$-algebra of $\varphi$-Hausdorff measurable sets
    includes the collection of all Borel sets
    (see for instance~\cite[Section~11.2]{ref:FollandRealAnalysis}).
    Observe that if one takes the measure function $\varphi(t) = t,$
    then the $\varphi$-Hausdorff measure coincides with the normalised Lebesgue measure.
    We will be particularly interested in measure functions of the form
    $\varphi(t) = t^s,$ where $s > 0.$
    In this case, for given $s$ and Borel set $E,$
    we will talk about the $s$-Hausdorff measure of $E$ and denote it by
    $\hausdorff{s}{E}.$
    We define the Hausdorff dimension of a Borel set $E,$
    which we will denote by $\dimension{E},$ as
    \begin{equation*}
     \dimension{E} \coloneq \sup \{s > 0\colon \hausdorff{s}{E} > 0\}.
    \end{equation*}
    We can now state our main result.

    \begin{theorem}
        \label{thm:PaleyWeissDimensionThm}
        Let $f$ be a finite Blaschke product with $f(0) = 0$ which is not a rotation.
        Consider a sequence $\{a_n\}$ of complex numbers tending to $0$ and such that $\sum_n |a_n| = \infty.$
        Then for any $w \in \complex,$ the set
        \begin{equation*}
            A(w) = \set{\xi \in \ddisk\colon \sum_{n=1}^\infty a_n f^n(\xi) \text{ converges and } \sum_{n=1}^\infty a_n f^n(\xi)=w}
        \end{equation*}
        has Hausdorff dimension $1.$
    \end{theorem}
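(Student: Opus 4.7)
The plan is to construct, for every $\delta < 1$, a Cantor-type set $E \subseteq A(w)$ of Hausdorff dimension at least $\delta$; since trivially $\dimension{A(w)} \leq 1$ this yields $\dimension{A(w)} = 1$.

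The construction is inductive. At each level $k \geq 0$ we produce a finite family $\cF_k$ of pairwise disjoint closed arcs in $\ddisk$, an index $N_k$, and an error $\e_k > 0$ with $\e_k \to 0$, arranged so that: (i) each $I \in \cF_{k-1}$ contains at least $M_k$ arcs from $\cF_k$, with $M_k \to \infty$; and (ii) for every $\xi \in I$ with $I \in \cF_k$ and every $N \geq N_{k-1}$, the partial sum $\sum_{n=1}^N a_n f^n(\xi)$ stays within $\e_k$ of $w$. Setting $E = \bigcap_k \bigcup_{I \in \cF_k} I$, one has $E \subseteq A(w)$, and a standard mass-distribution argument on the uniform probability measure on $E$ yields $\dimension{E} \geq \liminf_k \frac{\log(M_1 \cdots M_k)}{-\log \measure{I_k}}$ for $I_k$ a generic arc at level $k$; choosing $M_k$ large and packing the sub-arcs efficiently inside each parent arc makes this lower bound as close to $1$ as desired.

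The inductive step exploits the fact that $f$ is expanding on $\ddisk$: since $f(0)=0$ and $f$ is not a rotation, the Lyapunov exponent of $f$ with respect to normalized Lebesgue measure on $\ddisk$ is strictly positive (via Jensen's formula), and finite Blaschke products enjoy bounded distortion on small arcs. Hence for $N_k - N_{k-1}$ sufficiently large, each $I \in \cF_{k-1}$ is stretched by $f^{N_k-N_{k-1}}$ to cover $\ddisk$ with high multiplicity, allowing one to select $M_k$ disjoint sub-arcs of $I$ whose images under $f^{N_k}$ are prescribed small arcs on $\ddisk$. The Paley--Weiss mechanism underlying Theorem~\ref{thm:DonaireNicolauPaleyWeissThm} then lets us choose those prescriptions so that $\sum_{n=1}^{N_k} a_n f^n(\xi)$ lands within $\e_k$ of $w$ on each sub-arc; the required ingredients are $\sum_{N_{k-1} < n \leq N_k} |a_n|$ being large (by divergence of $\sum |a_n|$) and $\sup_{n > N_{k-1}} |a_n|$ being small (by $a_n \to 0$).

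The main obstacle is the control of the partial sums \emph{uniformly} over all intermediate $N \in (N_{k-1}, N_k)$, not merely at $N = N_k$. To address this, one partitions each block $(N_{k-1}, N_k]$ into a long \emph{correction} sub-block (on which the values $f^n(\xi)$ are still nearly constant on each $I \in \cF_{k-1}$, contributing a deterministic drift that can be pre-compensated) followed by a short \emph{steering} sub-block (on which $f^n(I)$ already covers $\ddisk$ and the $M_k$ descendants are actually carved out); bounded distortion together with the smallness of $|a_n|$ on the block guarantees that the intermediate partial sums never leave the $\e_k$-neighborhood of $w$. Combining this with the dimension estimate above produces $E \subseteq A(w)$ of Hausdorff dimension arbitrarily close to $1$, completing the proof.
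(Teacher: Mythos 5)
The high-level plan---nested arcs forming a Cantor set $E\subseteq A(w)$ plus a mass-distribution estimate for its dimension---matches the paper's (Lemma~\ref{lemma:HausdorffMeasure} and Lemma~\ref{lemma:AlternativeInductiveStep} play those roles there), but the crucial quantitative step of your sketch is asserted rather than established. You claim that the Paley--Weiss mechanism applied on one block $(N_{k-1},N_k]$ places the partial sum $\sum_{n\le N_k}a_nf^n$ within $\varepsilon_k$ of $w$ on each sub-arc, with $\varepsilon_k\to0$. But the underlying harmonic-measure estimate (Lemma~\ref{lemma:ArturLemma}) only gives, after a suitable rotation of the coefficients, a large set on which $\re{\sum a_nf^n}\ge c\bigl(\sum|a_n|^2\bigr)^{1/2}$; aimed at the current residual $w-F_{N_{k-1}}$, this yields a \emph{contraction} of the error, of the form $d\mapsto(1-r^2/2)d$, not arrival inside a prescribed small ball. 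The $\varepsilon_k\to0$ you need at the Cantor levels is the cumulative effect of iterating this contraction across many such blocks, an inner induction that your sketch never carries out and that is the heart of the paper's proof of Theorem~\ref{thm:GeneralPositiveExample} (where the Cantor generations $\cG_k$ do not correspond to single Paley--Weiss blocks but are built by running many consecutive blocks until the error has dropped a prescribed amount).

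Two subsidiary points follow. The lower bound from the mechanism is in $\ell^2$ while the error being reduced scales like $\ell^1$, so a uniform contraction rate requires $\bigl(\sum|a_n|^2\bigr)^{1/2}\gtrsim\sum|a_n|$ on each block; this reverse Cauchy--Schwarz inequality is hypothesis~\eqref{eq:LongBlockReverseCauchy} of Theorem~\ref{thm:GeneralPositiveExample}, and for power gauges $t^{1-\delta}$ it holds precisely because the block lengths can be taken constant---an issue the sketch ignores. Moreover, the ``long correction followed by short steering'' decomposition is backwards: once the arcs at level $k-1$ are normalized so that $\measure{f^{N_{k-1}}(I)}\asymp\eta$ (a normalization forced by the need to re-apply the harmonic-measure lemma, see~\eqref{eq:IndStepSizeControl}), the window during which $f^n$ remains nearly constant on $I$ has length $O(1)$, since $\min|f'|>1$ makes $f^n(I)$ fill $\ddisk$ in a bounded number of steps. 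The part of the block that must be long is the steering interval, because that is where $\sum|a_n|$ accumulates and supplies steering capacity; a short steering sub-block would have vanishing $\ell^1$-mass since $a_n\to0$. The short fixed-length piece (the $Q$ steps in the paper) is a settling interval placed between blocks so the iterate returns near the origin, not the interval where sub-arcs are carved out.
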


    Observe that the conditions on the sequence $\{a_n\}$ are necessary.
    Indeed, if the sequence does not tend to zero, since for any $n \geq 1$
    it holds that $|f^n(\xi)| = 1$ for every $\xi \in \ddisk,$
    one cannot expect to have any kind of convergence
    for the series \eqref{eq:IteratesSeries} on $\ddisk.$
    Moreover, if the sequence is absolutely summable, then the series
    \eqref{eq:IteratesSeries} is uniformly bounded and one cannot get
    convergence to all $w \in \complex.$
    Theorem~\ref{thm:PaleyWeissDimensionThm} will be a consequence of the following more general statement.
    We postpone the necessary definitions until Section~\ref{sec:Construction}.

    \begin{theorem}
     \label{thm:GeneralPositiveExample}
     Let $f$ be a finite Blaschke product with $f(0) = 0$ which is not a rotation
     and consider a sequence $\{a_n\}$ of complex numbers tending to $0$
     such that $\sum_n |a_n| =  \infty.$
     Consider as well a measure function $\varphi$
     such that $\lim_{t \to 0} \varphi(t)/t = \infty.$
     Then, there exist a constant $0 < c = c(f) < 1$
     and a positive integer $Q = Q(f)$ for which the following holds.

     Suppose that there exist a constant $0 < \beta < 1$
     and a large enough positive integer $N$ such that
     \begin{equation}
      \label{eq:FirstInequalityN}
      \frac{c\beta}{2\sqrt{3}} \left(1+\frac{2}{N-1}\right)^{-1} - \frac{2}{N-1} > 0,
     \end{equation}
     \begin{equation}
      \label{eq:SecondInequalityN}
      \frac{c}{2\sqrt{3(NQ+1)}} \left(1+\frac{2}{N-1}\right)^{-1} - \frac{2}{N-1} > 0
     \end{equation}
     and such that
     \begin{equation}
      \label{eq:LongBlockTendingToZero}
      \lim_{k\to\infty} \sum_{n=\overline{M}_k}^{\overline{N}_k-1} |a_n| = 0
     \end{equation}
     and
     \begin{equation}
      \label{eq:LongBlockReverseCauchy}
      \left(\sum_{n=\overline{M}_k}^{\overline{N}_k-1} |a_n|^2\right)^{1/2}
      \geq \beta \sum_{n=\overline{M}_k}^{\overline{N}_k-1} |a_n|, \qquad k \geq 1,
     \end{equation}
     where the sequences $\{(\overline{M}_k,\overline{N}_k)\}(\varphi,f,Q,N)$
     are defined by~\eqref{eq:SequencesGeneralCase}.
     Then, for any fixed $w \in \complex,$ the set
     \begin{equation*}
            A(w) = \set{\xi \in \ddisk\colon \sum_{n=1}^\infty a_n f^n(\xi) \text{ converges and } \sum_{n=1}^\infty a_n f^n(\xi)=w}
     \end{equation*}
     satisfies that $\hausdorff{\varphi}{A(w)} > 0.$
    \end{theorem}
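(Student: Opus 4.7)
The plan is to construct a Cantor-type compact set $E \subseteq A(w)$ by successive refinement of arcs in $\ddisk$, equip it with a natural probability measure $\mu$, and verify a Frostman-type bound $\mu(I) \lesssim \varphi(\measure{I})$ for every arc $I$; the mass distribution principle will then yield $\hausdorff{\varphi}{A(w)} > 0$. The entire scheme is driven by the block decomposition encoded in the sequences $\{(\overline{M}_k,\overline{N}_k)\}$, which alternates between \emph{long blocks} $[\overline{M}_k,\overline{N}_k)$ where \eqref{eq:LongBlockTendingToZero} and \eqref{eq:LongBlockReverseCauchy} hold, and complementary \emph{short blocks} in which the bulk of the divergent variation $\sum_n |a_n|$ is concentrated.

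Each refinement step exploits the expanding behaviour of $f$ on $\ddisk$: for any arc $I \subseteq \ddisk$ and any $n \geq 1$, the preimage under $f^n$ of a given arc $J \subseteq f^n(I)$ splits into disjoint subarcs of $I$ mapping surjectively onto $J$, and iterating $NQ$ times with the parameters $Q=Q(f)$ and $N$ of the hypothesis produces a controlled, large number of such children. I would use this combinatorial freedom on the short blocks to \emph{steer}: given a parent arc $I$ at level $k$, pick children subarcs on which the short-block partial sum pushes the cumulative value of $\sum_n a_n f^n(\xi)$ close to $w$. This is the Paley--Weiss mechanism behind Theorem~\ref{thm:DonaireNicolauPaleyWeissThm}, now performed relative to each parent arc; the hypotheses $a_n \to 0$ and $\sum_n|a_n| = \infty$ restricted to a short block supply enough flexibility to hit any prescribed target up to a small error.

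The long blocks must be crossed without disrupting the steering. Condition \eqref{eq:LongBlockTendingToZero} makes the total variation $\sum_{n=\overline{M}_k}^{\overline{N}_k-1}|a_n|$ tend to $0$, so the corresponding partial sum is uniformly small on surviving arcs. The reverse Cauchy inequality \eqref{eq:LongBlockReverseCauchy} prevents the coefficients from concentrating in a single term, which is essential: it allows me to run an $\Lp{2}$-type averaging argument over the children of a parent arc and conclude that a definite positive proportion of them avoid unwanted cancellations. The constraints \eqref{eq:FirstInequalityN} and \eqref{eq:SecondInequalityN} quantify exactly this survival ratio in terms of $c = c(f)$, $\beta$ and $N$: they ensure that at each generation the number of children retained after both steering and filtering is large enough for the construction to proceed indefinitely.

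The measure $\mu$ is defined as the weak limit of uniform probability measures on level-$k$ children. Since finite Blaschke products preserve the normalised Lebesgue measure under pullback, if $M_k$ denotes the number of level-$k$ children and $\lambda_k$ their typical length, then $M_k \lambda_k \approx 1$, hence $\mu(I) \lesssim 1/M_k \approx \lambda_k$ for any arc $I$ of length comparable to $\lambda_k$. The assumption $\lim_{t\to 0}\varphi(t)/t = \infty$ is precisely what converts this into $\mu(I) \leq C\,\varphi(\measure{I})$ for all small arcs $I$. The main technical obstacle, as I see it, is the simultaneous control of three quantities along the recursion: the shrinking scale $\lambda_k$, the cumulative error of the partial sums $F$ on surviving arcs, and the survival ratio dictated by \eqref{eq:FirstInequalityN}--\eqref{eq:SecondInequalityN}. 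The precise choice of $\{(\overline{M}_k,\overline{N}_k)\}$ in \eqref{eq:SequencesGeneralCase} has to be tuned so that all three remain in balance at every generation.
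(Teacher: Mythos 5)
The overall blueprint (Cantor set $E\subseteq A(w)$, natural probability measure, Frostman bound, mass distribution principle) does match the paper, but the mechanics you describe differ from the actual proof in ways that would break the argument.

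First, you have the roles of the long and short blocks reversed. In the paper the \emph{steering} happens on the long blocks $[\overline{M}_k,\overline{N}_k)$ (more precisely on the auxiliary intervals $[M_k,N_k]$), not on the short blocks. The engine is Lemma~\ref{lemma:AlternativeInductiveStep}, built on Corollary~\ref{corl:LocalisedArturLemma} and ultimately on the harmonic measure lower bound of Lemma~\ref{lemma:ArturLemma}: inside $dI(z)$ one finds a subset of relative length $\geq c_1/d$ on which $\re{\sum_{n=M}^{N} a_n f^n}$ exceeds $c\left(\sum_{n=M}^{N}|a_n|^2\right)^{1/2}$. The short blocks of length $NQ$ (and the sub-blocks of length $Q$) are a negligible remainder whose only purpose is to return the base point inside $\{|z|<\varepsilon\}$ after $Q$ iterates (Denjoy--Wolff), allowing the next application of Lemma~\ref{lemma:ArturLemma}. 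The claim that ``the bulk of the divergent variation is concentrated in the short blocks'' is not a hypothesis of the theorem and is not needed; what \eqref{eq:LongBlockTendingToZero} actually gives is that individual long-block $\ell^1$ masses vanish, which controls oscillation of partial sums on surviving arcs (via Corollary~\ref{corl:OscillationControlSequence}) and lets $d_k^{J_k}\to 0$.

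Second, your proposed mechanism for survival of a definite proportion of children --- ``an $\Lp{2}$-type averaging argument over children'' and ``combinatorial freedom in preimage arcs under $f^n$'' --- is not what drives the construction, and it is difficult to make it yield a uniform constant without something equivalent to Lemma~\ref{lemma:ArturLemma}. The reverse Cauchy condition \eqref{eq:LongBlockReverseCauchy} does not function as an anti-concentration estimate enabling averaging; rather it converts the $\ell^2$ gain of Lemma~\ref{lemma:ArturLemma} into the $\ell^1$ gain $c\beta\sum|a_n|$ needed to steer across many generations; the inequalities \eqref{eq:FirstInequalityN}--\eqref{eq:SecondInequalityN} guarantee this gain dominates the short-block losses so that the constant $R$ in \eqref{eq:LongBlocksRealPart} is positive.

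Third, the Frostman bound cannot follow from $\lim_{t\to 0}\varphi(t)/t=\infty$ alone. After $k$ generations the measure-to-length ratio of a child arc has grown by a factor $\approx (c/(2d))^{-k}$, so the naive estimate only gives $\mu(I)\lesssim c^{-k}\measure{I}$, which for a fixed $\varphi$ is useless. The paper forces the contraction ratio of each generation to be at most $\psi^{-1}(C^{-(k+1)})/\psi^{-1}(C^{-k})$ by choosing $G_k$ (hence $\overline{N}_k-\overline{M}_k$) large enough in \eqref{eq:EpsilonKs} and \eqref{eq:SequencesGeneralCase}; this is exactly condition~\eqref{item:ArcSizeBound} of Lemma~\ref{lemma:HausdorffMeasure}, without which the conclusion $\mu(I)\lesssim\varphi(\measure{I})$ fails. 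Your proposal acknowledges that the sequences are ``tuned'' but omits what the tuning must achieve, and this omission is precisely where the proof's difficulty lies.
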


    The proof of Theorem~\ref{thm:GeneralPositiveExample} will be based
    on a delicate construction originally due to Weiss \cite{ref:WeissTheoremOfPaley}
    that was also used by Donaire and Nicolau \cite{ref:DonaireNicolau}.
    In particular, we start by refining the latter to obtain a Cantor like set $E \subseteq A(w).$
    Then, we show that $\hausdorff{\varphi}{E} > 0$ using standard arguments.

    Observe that Theorem~\ref{thm:PaleyWeissDimensionThm} states that,
    for every $w \in \complex,$
    $\hausdorff{s}{A(w)} > 0$ for any $0 < s < 1,$
    given any finite Blaschke product $f$ fixing the origin which is not a rotation
    and given any complex sequence $\{a_n\}$ tending to zero which is not absolutely summable.
    On the other hand, Theorem~\ref{thm:GeneralPositiveExample} implies that,
    if one considers a measure function $\varphi$ which is more restrictive
    than any power function $\varphi_s(t) = t^s,$ $0 < s < 1$
    (but still less restrictive than $\varphi_1(t) = t$),
    then there are still plenty of examples of finite Blaschke products and sequences
    $\{a_n\}$ for which the set $A(w)$ has $\hausdorff{\varphi}{A(w)} > 0$
    for all $w \in \complex.$
    Indeed, given such a measure function $\varphi$
    and a fixed finite Blaschke product $f$ that fixes the origin
    and that is not a rotation,
    one can easily choose $\beta,$ $N$ and $\{a_n\}$ satisfying \eqref{eq:FirstInequalityN}--\eqref{eq:LongBlockReverseCauchy}.
    For instance, take $\beta = 1/2,$ $N$ large enough so that both \eqref{eq:FirstInequalityN} and \eqref{eq:SecondInequalityN} are satisfied
    and let $a_{n} = 1/k$ if $n = \overline{M}_k$ for some $k \geq 1,$ while $a_n = 0$ otherwise.

    Concerning the proof of Theorem~\ref{thm:PaleyWeissDimensionThm},
    in Section~\ref{sec:ProofMainResults} we will show that when one considers
    the measure function $\varphi_s$ for some $0 < s < 1,$
    any finite Blaschke pro-duct $f$ and any complex sequence $\{a_n\}$
    satisfying the hypotheses of Theorem~\ref{thm:PaleyWeissDimensionThm},
    then there exist automatically $\beta$ and $N$
    for which \eqref{eq:FirstInequalityN}--\eqref{eq:LongBlockReverseCauchy} hold.
    Hence, Theorem~\ref{thm:GeneralPositiveExample} implies that the set $A(w)$
    will have $\hausdorff{s}{A(w)} >0$ for any $w \in \complex$ and,
    since $s < 1$ is arbitrary, $A(w)$ has dimension $1.$

    Finally, it is important to mention that Theorem~\ref{thm:PaleyWeissDimensionThm}
    is optimal in terms of measure functions.
    In other words, if one substitutes the scale of measure functions $\varphi_s,$ $0 < s < 1,$
    by a measure function $\varphi$ more restrictive
    than any power function $\varphi_s$ (but still less restrictive than $\varphi_1$),
    then there are examples for which the set $A(w)$
    has $\hausdorff{\varphi}{A(w)} = 0$ for all $w \in \complex.$
    This is expressed in precise terms in the following theorem,
    which is proved using martingales.
    \begin{theorem}
        \label{thm:Optimality}
        Consider a measure function $\varphi$ such that
        \begin{equation*}
            \lim_{t \to 0^+} \frac{\varphi(t)}{t^s} = 0
        \end{equation*}
        for any $0 < s < 1.$
        Then, there exist a finite Blaschke product $f$ and
        a sequence $\{a_n\} = \{a_n\}(\varphi,f)$ of complex numbers tending to zero with
        $\sum_n |a_n| = \infty$ such that for any $w \in \complex,$ the set
        \begin{equation*}
            A(w) \coloneq \left\{\xi \in \ddisk\colon \sum_{n=1}^\infty a_n f^n(\xi)
            \text{ converges and } \sum_{n=1}^\infty a_n f^n(\xi) = w\right\}
        \end{equation*}
        has Hausdorff measure $\hausdorff{\varphi}{A(w)} = 0.$
    \end{theorem}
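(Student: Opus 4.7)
The plan is to construct an explicit example where $f$ is the simplest non-trivial Blaschke product and $\{a_n\}$ is sparsely supported. Take $f(z)=z^2$, so that $f^n(z)=z^{2^n}$ and $\{f^n\}_{n\geq 1}$ is an orthonormal system in $L^2(\ddisk)$. Identifying $\ddisk$ with $[0,1)$ via $e^{2\pi i\theta}\mapsto\theta$, the partial sums $S_N(\xi)=\sum_{n=1}^N a_n\xi^{2^n}$ form a lacunary trigonometric series, and for the tail $R_m:=\sum_{k>m}a_k f^k$ a direct calculation gives $E[R_m\mid\cF_{m+1}]=R_{m+1}$ with respect to the decreasing filtration $\cF_m:=\sigma(f^{m+1},f^{m+2},\ldots)$, so $(R_m)$ is a reverse martingale. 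I choose $a_n=1/k$ if $n=n_k$ for a strictly increasing sequence of positive integers $\{n_k\}$ (to be specified later in terms of $\varphi$), and $a_n=0$ otherwise. Then $a_n\to 0$, $\sum_n|a_n|=\sum_k 1/k=\infty$, and $\sum_n|a_n|^2<\infty$, so $S_N\to S_\infty$ in $L^2$ and a.e., and $A(w)=\{\xi:S_\infty(\xi)=w\}$.

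The core of the argument is a covering bound at each scale $2^{-n_k}$. By Doob's maximal inequality applied to the reverse martingale $(R_m)$, for any $\lambda_k>0$,
\[
 \bigl|\{\xi\in\ddisk:\,\textstyle\sup_{m\geq n_k}|R_m(\xi)|>\lambda_k\}\bigr|
 \leq \frac{\|R_{n_k}\|_2^2}{\lambda_k^2}
 \leq \frac{C}{k\,\lambda_k^2}.
\]
Outside this exceptional set, $\xi\in A(w)$ forces $|S_{n_k}(\xi)-w|\leq\lambda_k$. I invoke an anti-concentration estimate for the lacunary partial sum $S_{n_k}$: viewed as a $\complex$-valued random variable on $[0,1)$, its distribution has a density uniformly bounded by a constant (since $\sum_{j\leq k}1/j^2\geq 1$, so the variance is bounded from below). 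Consequently the number of dyadic arcs $I$ of length $2^{-n_k}$ with $|S_{n_k}(\xi)-w|\leq\lambda_k$ for some $\xi\in I$ is at most $C\cdot 2^{n_k}\lambda_k^2$. Adjoining a dyadic cover of the exceptional set by arcs of the same length produces a cover of $A(w)$ consisting of at most $C\cdot 2^{n_k}\bigl(\lambda_k^2+1/(k\lambda_k^2)\bigr)$ arcs. Optimizing at $\lambda_k^2=k^{-1/2}$ yields
\[
 \hausdorff{\varphi}{A(w)}\,\leq\,\frac{C\cdot 2^{n_k}\varphi(2^{-n_k})}{\sqrt{k}}.
\]

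Set $g(n):=2^n\varphi(2^{-n})$. The hypothesis $\varphi(t)/t^s\to 0$ as $t\to 0^+$ for every $0<s<1$ implies $g(n)=o(2^{\e n})$ for every $\e>0$, i.e., $g$ grows sub-exponentially. It is therefore possible to pick a strictly increasing sequence of positive integers $\{n_k\}$ with $n_k\to\infty$ and $g(n_k)=o(\sqrt{k})$; for this choice the right-hand side of the displayed bound tends to zero, giving $\hausdorff{\varphi}{A(w)}=0$ for every $w\in\complex$, as required.

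The main technical obstacle is establishing the density bound for $S_{n_k}$: although the random variables $\{e^{2\pi i\cdot 2^{n_k}\theta}\}_k$ on $[0,1)$ are not genuinely independent, their super-lacunary structure (which can be enforced by making the gaps $n_{k+1}-n_k$ large) makes them behave as such, and a bounded density for $S_{n_k}$ can be derived either by direct Fourier analysis of its characteristic function or by comparison with a Rademacher series of matching variance. A secondary technicality is that $S_{n_k}$ is not exactly constant on each dyadic arc of generation $n_k$ (the term $a_k f^{n_k}$ traverses the full circle on such an arc, producing an oscillation of order $1/k$); this must be absorbed into $\lambda_k$, but does not affect the leading-order estimates above since $1/k\ll\lambda_k=k^{-1/4}$.
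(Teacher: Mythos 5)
Your proposal sets out from a genuinely different premise than the paper's proof: you insist on $\sum_n|a_n|^2<\infty$ so that $F=\sum a_nf^n$ is a single $L^2$ function and $A(w)$ is a level set; the paper, by contrast, engineers $\sum_n|a_n|^2=\infty$ precisely so that the dyadic martingale $M_n=(F\mid\cF_n)$ is $L^2$-divergent, and the entire mechanism of Section~\ref{sec:Optimality} (Lemma~\ref{lemma:BoundaryProbabilityDecay}) rests on that divergence to produce \emph{exponential} decay of $\measure{A(R,\overline{N}(k))}$ in $k$. The paper even flags this explicitly (``This turns out to be absolutely necessary for our argument''). Your route therefore must be judged on its own terms, and it has two serious gaps.

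\textbf{The covering of the exceptional set is wrong.} The tail $R_m=\sum_{j>m}a_jf^j$ with $f(z)=z^2$ is measurable with respect to the rotation-invariant $\sigma$-algebra $\sigma(z^{2^{m+1}})$, not with respect to the dyadic $\sigma$-algebra. Consequently $\sup_{m\geq n_k}|R_m|$ is invariant under the rotation $\xi\mapsto e^{2\pi i/2^{n_k+1}}\xi$, so the exceptional set $E_k=\{\sup_{m\geq n_k}|R_m|>\lambda_k\}$ is a disjoint union of $2^{n_k+1}$ rotated copies of a fixed template. Since the tail coefficient sum $\sum_{j>k}|a_{n_j}|$ diverges, $\|R_{n_k}\|_\infty=\infty$ and $E_k\neq\emptyset$ for every finite $\lambda_k$; picking any $\xi_0\in E_k$, all $2^{n_k+1}$ rotates $e^{2\pi i\ell/2^{n_k+1}}\xi_0$ lie in $E_k$, and these points land in every dyadic arc of length $2^{-n_k}$. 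Thus covering $E_k$ by dyadic arcs of that length requires \emph{all} $2^{n_k}$ of them, no matter how small $\measure{E_k}$ is, and the resulting $\varphi$-sum $2^{n_k}\varphi(2^{-n_k})=g(n_k)\to\infty$ does not tend to zero. The count $C\cdot 2^{n_k}/(k\lambda_k^2)$ you wrote silently replaces ``measure'' by ``number of dyadic arcs needed''; that substitution is justified only for sets adapted to the dyadic filtration, which is exactly why the paper works with forward dyadic martingales (so that $A(R,N)=\{\max_{n\leq N}|M_n|\leq R\}$ is $\cF_N$-measurable and truly a union of dyadic arcs).

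\textbf{Even granting the coverings and the density bound, the decay rate cannot work.} Write $b_k=|a_{n_k}|$ and $T_k=\sum_{j>k}b_j^2=\|R_{n_k}\|_2^2$. The constraints $\sum_kb_k=\infty$, $\sum_kb_k^2<\infty$ force $T_k$ to decay essentially no faster than $k^{-1}$ (one checks that $T_k\asymp k^{-\alpha}$ with $\alpha>1$ forces $\sum b_k<\infty$). After optimizing $\lambda_k$, your bound is $\hausdorff{\varphi}{A(w)}\lesssim g(n_k)\sqrt{T_k}\gtrsim g(n_k)/\sqrt{k}$ up to logarithms, where $g(n)=2^n\varphi(2^{-n})$. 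Since $\{n_k\}$ is strictly increasing, $n_k\geq k$ and hence $g(n_k)\geq g(k)$ (for $g$ eventually increasing, which holds since $\varphi(t)/t\to\infty$). You then need $g(k)=o(\sqrt{k})$. But the hypothesis on $\varphi$ only gives $g(n)=o(2^{\e n})$ for every $\e>0$; the function $g(n)=2^{\sqrt{n}}$ is admissible (take $\varphi(2^{-n})=2^{\sqrt{n}-n}$), and for it $g(k)/\sqrt{k}\to\infty$. So the sequence $\{n_k\}$ you claim to be able to choose simply does not exist for general $\varphi$ as in the theorem. This is why the paper cannot get away with convergence of $\sum|a_n|^2$: a convergent tail yields only polynomial-in-$k$ bounds, whereas a divergent $\sigma(N)^2$ fed through iterated Doob stopping produces $\measure{A(R,\overline{N}(k))}\lesssim c^k$, and it is this exponential decay that absorbs the super-polynomial growth of $\psi(\nu^{-\overline{N}(k)})$.

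\textbf{A secondary point.} The anti-concentration claim is not a consequence of the variance being bounded below: a single term $\xi^{2^{n_1}}$ has variance $1$ but its law is uniform on the unit circle and has no density on $\complex$. For a finite sum $\sum_{j\leq k}b_j\,U_j$ of genuinely independent uniforms, a bounded density does hold for $k\geq 3$ by a Bessel-function computation, and super-lacunary gaps make the $\xi^{2^{n_j}}$ approximately independent, but a quantitative comparison is needed and is not supplied. Still, this issue is repairable in principle; the two gaps above are not, and either alone is fatal to the proposed proof.
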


    This article is structured as follows.
    In Section~\ref{sec:Preliminary} we collect mostly without proofs some previous results already used
    by Donaire and Nicolau for their Theorem~\ref{thm:DonaireNicolauPaleyWeissThm}.
    Section~\ref{sec:Construction} contains the details of the iterative construction
    used to show the main results.
    In Section~\ref{sec:ProofMainResults} we show both Theorems~\ref{thm:PaleyWeissDimensionThm}
    and~\ref{thm:GeneralPositiveExample}.
    Finally,  in Section~\ref{sec:Optimality} we show Theorem~\ref{thm:Optimality}.

    \subsection*{Acknowledgements}
    The authors would like to thank Artur~Nicolau for helpful discussions on the topic
    and for providing some of the references used in the article.
    They are also thankful for the valuable comments of the referee,
    which have greatly improved the quality of the paper and allowed for the correction of some errors.

    \subsection{Notation and conventions}
    Here we denote the normalised Lebesgue measure on the unit circle $\ddisk$ by $\mathrm{m},$
    so that given a measurable set $E \subseteq \ddisk,$ its normalised Lebesgue measure is $\measure{E}.$
    Sometimes we will also refer to the measure $\measure{E}$ of a set $E \subseteq \ddisk$ as its \emph{length}.
    For $z \in \disk,$ we denote the Poisson kernel by
    \begin{equation*}
        P_z(\xi) = \frac{1-|z|^2}{|\xi-z|^2}, \qquad \xi \in \ddisk
    \end{equation*}
    and, for a measurable set $E \subseteq \ddisk,$ its harmonic measure from point $z$ is defined as
    \begin{equation*}
        w(z,E) \coloneq \int_E P_z(\xi)\,\mathdm(\xi).
    \end{equation*}
    Given a point $z \in \disk,$ we denote by $I(z)$ the closed arc on $\ddisk$ centered at $z/|z| \in \ddisk$
    and of length $\measure{I(z)} = 1-|z|$ (with the convention that $I(0) = \ddisk$).
    Also, given an arc $I \subsetneq \ddisk$ with center $\xi,$ we define $z(I) = (1-\measure{I}) \xi,$
    and if $I = \ddisk,$ then we define $z(I) = 0.$
    Note that with this notation, for any $z \in \disk$ we have that $z(I(z)) = z,$
    while for any closed arc $I \subseteq \ddisk$ it holds that $I(z(I)) = I.$
    Moreover, given $c >0$ and a closed arc $I \subsetneq \ddisk,$ if $c \cdot \measure{I} \leq 1,$
    we define the closed arc $cI$ to be concentric to $I$ and of length $\measure{cI} = c \cdot \measure{I}.$
    In the case in which $c \cdot \measure{I} > 1,$ we just take $cI = \ddisk.$

    \section{Preliminary results}
    \label{sec:Preliminary}
    Here we collect some auxiliary results that will be needed
    in the rest of the paper.
    First, we state without a proof a lemma due to Nicolau (see \cite[Lemma~3.3]{ref:NicolauConvergenceIterates}).

    \begin{lemma}[\cite{ref:NicolauConvergenceIterates}]
        \label{lemma:ArturLemma}
        Consider an inner function $f$ (not necessarily a finite Blaschke product) such that
        it is not a rotation and $f(0) = 0.$
        Then, there exist constants $0 < \e = \e(f) < 1$ and $0 < c = c(f) < 1,$
        such that for all positive integers $M,N$ with $M < N,$
        for all $z \in \disk$ with $|f^M(z)| < \e,$
        and for every sequence $\{a_n\}_{n=M}^N$ of complex numbers, the Borel subset of $\ddisk$ defined by
        \begin{equation}
            \label{eq:ArturLemmaSet}
            C \coloneq \set{\xi \in \ddisk\colon \re{\sum_{n=M}^{N} a_nf^n(\xi)} \geq c \left(\sum_{n=M}^{N}|a_n|^2\right)^{1/2}}
        \end{equation}
        has harmonic measure $w(z,C) \geq c$ with respect to point $z$.
    \end{lemma}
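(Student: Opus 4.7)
The plan is a Paley--Zygmund second-moment argument in $\Lp{2}(w(z,\cdot))$, exploiting that the iterates $\{f^n\}_{n\ge M}$ form a near-orthogonal system once $|f^M(z)|$ is small.

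The first step is to compute the basic pairings. Because $|f^m|=1$ almost everywhere on $\ddisk$ and $f^n$ vanishes at the zeros of $f^m$ with at least the same multiplicity when $n\ge m$, the boundary product $f^n\overline{f^m}$ extends to the analytic function $f^n/f^m$ on $\disk$; integrating against harmonic measure (which, for bounded holomorphic functions, just evaluates at $z$) yields
\[
\int_{\ddisk} f^n\overline{f^m}\,dw(z,\cdot) = \frac{f^n(z)}{f^m(z)} = \frac{u_n}{u_m},\qquad n\ge m,\qquad u_k\coloneq f^k(z),
\]
with the complex conjugate value for $n<m$ and $1$ on the diagonal. Since $f(0)=0$ and $f$ is not a rotation, the Schwarz lemma gives $|f'(0)|<1$, so we can fix $\e=\e(f)$ and $r=r(f)\in(0,1)$ with $|f(w)/w|\le r$ whenever $|w|\le\e$. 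The hypothesis $|u_M|<\e$ then propagates to $|u_n|\le\e\, r^{n-M}$ and $|u_n/u_m|\le r^{n-m}$ for all $n\ge m\ge M$, so every cross-pairing decays geometrically.

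From here I would aim at the Khintchine-type moment inequalities
\[
c_1(f)\sum_n|a_n|^2\;\le\;\int_{\ddisk}|F|^2\,dw(z,\cdot),\qquad \int_{\ddisk}|F|^4\,dw(z,\cdot)\;\le\;c_2(f)\Big(\sum_n|a_n|^2\Big)^2,
\]
where $F=\sum_{n=M}^N a_n f^n$. The $\Lp{2}$ lower bound is the main technical obstacle: the Gram matrix $(u_n/u_m)$ is positive semidefinite by construction, and after conjugation by the diagonal unitary $\mathrm{diag}(u_n/|u_n|)$ its entries become $|u_n|/|u_m|=e^{-|t_n-t_m|}$ with $t_n=-\log|u_n|$ uniformly separated by at least $\log(1/r)$; this positive-definite kernel of Poisson type then admits a uniform spectral lower bound $c_1(f)>0$, comparable to $(1-r)/(1+r)$ (the minimum of the Poisson symbol at parameter $r$). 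The fourth-moment bound requires a more elaborate expansion of $|F|^4$ into $4$-fold products $f^{n_1}f^{n_2}\overline{f^{m_1}f^{m_2}}$, to each of which one applies the same harmonic-extension trick (on the analytic quotient factor) and uses the geometric decay of the resulting off-diagonals to absorb the combinatorics. Cauchy--Schwarz also yields the auxiliary bound $|F(z)|\le\e(1-r^2)^{-1/2}(\sum|a_n|^2)^{1/2}$, so the $\Lp{1}$ mean of $F$ against $w(z,\cdot)$ is negligible compared to its $\Lp{2}$ norm.

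Finally, I would run Paley--Zygmund on $G=F-F(z)$, which is analytic with $G(z)=0$. Since $\int G\,dw=0$ and $\int G^2\,dw=G(z)^2=0$, we obtain $\int\re{G}\,dw=0$ and $\int(\re{G})^2\,dw=\tfrac12\int|G|^2\,dw\gtrsim\sum|a_n|^2$, while $\int(\re{G})^4\,dw\lesssim(\sum|a_n|^2)^2$ is inherited from the fourth-moment bound. To isolate the positive side of $\re{G}$, H\"older's inequality gives
\[
\int(\re{G})^+\,dw = \tfrac12\int|\re{G}|\,dw \;\ge\; \frac{\bigl(\int(\re{G})^2\,dw\bigr)^{3/2}}{2\bigl(\int(\re{G})^4\,dw\bigr)^{1/2}} \;\gtrsim\;\Big(\sum|a_n|^2\Big)^{1/2},
\]
and applying Paley--Zygmund to the nonnegative random variable $(\re{G})^+$ (whose second moment is at most $\int(\re{G})^2\,dw\lesssim\sum|a_n|^2$) yields $w(z,\{\re{G}\ge c(\sum|a_n|^2)^{1/2}\})\ge c$. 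Since $\re F=\re G+\re F(z)$ and $|\re F(z)|\le|F(z)|=O(\e)(\sum|a_n|^2)^{1/2}$, choosing $\e$ small enough absorbs this error, and the desired bound $w(z,C)\ge c$ follows.
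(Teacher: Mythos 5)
The paper states this lemma without proof, crediting it to Nicolau's~\cite{ref:NicolauConvergenceIterates} (Lemma~3.3 there), so there is no internal proof to compare against; what follows is an assessment of your proposal on its own terms.

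The framework you choose---pairing the $f^n$ against harmonic measure, then running a second-moment/Paley--Zygmund argument---is a natural and reasonable line of attack, and the first block of computations is correct. In particular, the identity $\int_{\ddisk} f^n\overline{f^m}\,dw(z,\cdot) = f^n(z)/f^m(z)$ for $n\ge m$ is right: since $f^{n-m}(0)=0$, every zero of $f^m$ is a zero of $f^n$ with at least the same multiplicity, so $f^n/f^m$ extends to a bounded analytic (indeed inner) function, agreeing a.e.\ on $\ddisk$ with $f^n\overline{f^m}$; integrating an $H^\infty$ function against $dw(z,\cdot)$ evaluates at $z$. The Schwarz-lemma propagation $|u_n/u_m|\le r^{n-m}$ for $n\ge m\ge M$ is also correct, and the reduction of the correlation matrix by a diagonal unitary to the kernel $e^{-|t_n-t_m|}$ at points separated by at least $\log(1/r)$ is a nice observation.

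There are, however, two genuine gaps, the second of which is serious. First, the claimed uniform spectral lower bound $c_1(f)\gtrsim(1-r)/(1+r)$ for the Gram matrix of $e^{-|t-s|}$ at separated (but not equally spaced) points is plausible --- it is exact for equispaced points, where one gets a Kac--Murdock--Szeg\H{o} matrix --- but you assert it rather than prove it, and some argument (tridiagonal inverse of the exponential-kernel Gram matrix, or a Fourier-symbol comparison) would be needed to pin it down for arbitrary separated configurations. Second, and more fundamentally, the fourth-moment bound does not follow from ``the same harmonic-extension trick.'' When expanding $\int|F|^4\,dw$, the integrand $f^{n_1}f^{n_2}\overline{f^{m_1}f^{m_2}}$ agrees a.e.\ on $\ddisk$ with the quotient $f^{n_1}f^{n_2}/(f^{m_1}f^{m_2})$, and for evaluation at $z$ to apply one needs either this quotient or its reciprocal to be analytic in $\disk$. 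This is not always the case. Concretely, if the zero of $f$ at the origin is simple, then at any other zero $w$ of some $f^{j_0}$ the multiplicity of $w$ as a zero of $f^j$ stabilises for $j\ge j_0$; in orderings like $m_1 > n_1 \ge j_0 > n_2 > m_2$ the quotient then acquires a \emph{zero} at $w$ while in orderings like $m_1 \ge j_0 > n_1 > n_2 > m_2$ it acquires a \emph{pole} at $w$. If $f$ has several distinct zero orbits, a single four-tuple can therefore produce a quotient with both zeros and poles in $\disk$, so neither it nor its reciprocal is in $H^\infty$, and the evaluation-at-$z$ step collapses. Making the $\Lp{4}$ estimate rigorous for a general inner function requires a different device (conditioning/martingale arguments or a more careful decomposition of $|F|^2$), and this is precisely the part you do not supply. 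Until that is repaired, the Paley--Zygmund step, which hinges on the $\Lp{1}$ lower bound via $\|X\|_1\ge\|X\|_2^3/\|X\|_4^2$, has no input.
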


    Using the properties of the Poisson kernel, the previous lemma admits a localized version.

    \begin{corollary}
        \label{corl:LocalisedArturLemma}
        Consider an inner function $f$ (not necessarily a finite Blaschke product) such that
        it is not a rotation and $f(0) = 0.$
        Then, there exist constants $0 < \e = \e(f) < 1$, $0 < c = c(f) < 1$ and a positive integer $d=d(f)$,
        such that for all positive integers $M,N$ with $M < N,$
        for all $z \in \disk$ with $|f^M(z)| < \e,$
        and for every sequence $\{a_n\}_{n=M}^N$ of complex numbers, the Borel subset of $\ddisk$ defined by
        \begin{equation*}
            D \coloneq \set{\xi \in dI(z)\colon \re{\sum_{n=M}^{N} a_nf^n(\xi)} \geq c \left(\sum_{n=M}^{N}|a_n|^2\right)^{1/2}}
        \end{equation*}
        has Lebesgue measure
        \begin{equation*}
            \measure{D} \geq c (1-|z|).
        \end{equation*}
    \end{corollary}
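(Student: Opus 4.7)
The plan is to deduce the localized statement from Lemma~\ref{lemma:ArturLemma} by combining it with two standard Poisson kernel estimates. The first guarantees that most of the harmonic mass of the set produced by Lemma~\ref{lemma:ArturLemma} is concentrated on the arc $dI(z)$ once $d$ is taken large, while the second converts harmonic measure into Lebesgue measure on that arc.

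With $\e = \e(f)$ and $c_0 = c(f)$ the constants given by Lemma~\ref{lemma:ArturLemma}, and with $C \subseteq \ddisk$ the Borel set defined by~\eqref{eq:ArturLemmaSet}, we have $w(z, C) \geq c_0$. A routine computation using $|\xi - z|^2 \gtrsim (1-|z|)^2 + \alpha^2$, where $\alpha$ denotes the arclength distance from $\xi$ to $z/|z|$, yields
\begin{equation*}
    w(z, \ddisk \setminus dI(z)) = \int_{\ddisk \setminus dI(z)} P_z(\xi) \, \mathdm(\xi) \leq \frac{C_0}{d}
\end{equation*}
for some absolute constant $C_0$. Fixing $d = d(f)$ as the least positive integer with $C_0/d \leq c_0/2$, I obtain $w(z, D) \geq c_0/2$ for $D = C \cap dI(z)$.

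For the second step, the crude bound $P_z(\xi) \leq (1 - |z|^2)/(1 - |z|)^2 \leq 2/(1-|z|)$, valid on all of $\ddisk$ since $|\xi - z| \geq 1 - |z|$, gives
\begin{equation*}
    \frac{c_0}{2} \leq w(z, D) \leq \frac{2}{1-|z|} \measure{D},
\end{equation*}
whence $\measure{D} \geq (c_0/4)(1 - |z|)$. Renaming $c_0/4$ as the constant $c$ promised in the statement, and keeping the $\e$ supplied by Lemma~\ref{lemma:ArturLemma}, finishes the argument. The same computation covers the degenerate case $dI(z) = \ddisk$, in which $D = C$ and the Poisson upper bound still applies pointwise on $\ddisk$.

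There is no real obstacle here: the argument is a textbook localization of harmonic measure via Poisson kernel estimates. The only genuine choice is that of $d$, which is forced by requiring the harmonic mass of $C$ lost upon restricting to $dI(z)$ to be strictly less than the mass $c_0$ produced by Lemma~\ref{lemma:ArturLemma}, so that the restricted set $D$ remains quantitatively nontrivial.
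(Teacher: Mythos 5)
Your proof is correct and follows essentially the same route as the paper's: take $c_0$ from Lemma~\ref{lemma:ArturLemma}, choose $d$ so the Poisson tail outside $dI(z)$ is at most $c_0/2$, conclude $w(z, C\cap dI(z))\geq c_0/2$, and then apply the pointwise bound $P_z\leq(1+|z|)/(1-|z|)\leq 2/(1-|z|)$ to obtain $\measure{D}\geq(c_0/4)(1-|z|)$ and set $c=c_0/4$. The only nominal difference is that you make the decay rate $w(z,\ddisk\setminus dI(z))\lesssim 1/d$ explicit, whereas the paper simply asserts such a $d$ exists by direct computation.
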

    \begin{proof}
        Denote by $c_0=c_0(f)$ the constant appearing in Lemma~\ref{lemma:ArturLemma} and observe that the lemma still holds if one takes a different constant $c \leq c_0.$
        By direct computation one can check that there is a large enough positive integer $d=d(c_0)$ such that the Poisson kernel satisfies
        \begin{equation*}
            \int_{\ddisk\setminus dI(z)} P_z(\xi)\,\mathdm(\xi)=\int_{\ddisk\setminus dI(z)} \frac{1-|z|^2}{|\xi-z|^2}\,\mathdm(\xi) \leq \frac{c_0}{2}.
        \end{equation*}
        Thus, the set $C$ defined by \eqref{eq:ArturLemmaSet} satisfies
        \begin{equation*}
            w(z,C \cap dI(z)) = \int_{C \cap dI(z)} P_z(\xi)\,\mathdm(\xi)
            \geq \frac{c_0}{2}.
        \end{equation*}
        Next, using the estimate
        \begin{equation*}
            P_z(\xi) \leq \frac{1+|z|}{1-|z|}, \qquad\forall\xi \in \ddisk,
        \end{equation*}
        we get that
        \begin{equation*}
            \measure{D} \geq \frac{c_0}{2}\cdot\frac{1-|z|}{1+|z|} \geq \frac{c_0}{4} (1-|z|).
        \end{equation*}
      Thus, setting $c:=c_0/4$ concludes the proof.
    \end{proof}

    In the rest of this section, we collect without proof some results
    due to Donaire and Nicolau \cite{ref:DonaireNicolau}
    that will be necessary for the construction required to prove our main results.
    Observe that if $f$ is a finite Blaschke product fixing the origin
    which is not a rotation, then $\min\{|f'(\xi)|\colon \xi \in \ddisk\} > 1.$
    In other words, the induced map $f\colon \ddisk \rightarrow \ddisk$ at the boundary
    is expanding in the sense that $\measure{f(I)} > \measure{I}$ for any proper
    subarc of $\ddisk.$
    The following lemma quantifies this expanding behaviour.

    \begin{lemma}[{\cite[Lemma~2.1]{ref:DonaireNicolau}}]
     \label{lemma:QuantitativeExpansion}
     Consider a finite Blaschke product $f$ with $f(0) = 0$ which is not a rotation
     and let $K = K(f) = \min\{|f'(\xi)|\colon \xi \in \ddisk\} > 1.$
     Let $N$ be a positive integer and consider an arc $I \subseteq \ddisk$
     with $\measure{f^N(I)} \leq \delta < 1.$
     Then
     \begin{equation*}
      |f^k(\xi) - f^k(\xi')| \leq \delta K^{k-N}
     \end{equation*}
     for any $\xi,\xi' \in I$ and any integer $1 \leq k \leq N.$
    \end{lemma}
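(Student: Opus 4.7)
The strategy is to combine the chain rule, which propagates the uniform derivative bound $|f'|\geq K$ to all iterates, with a monotone-lift argument that bounds the total variation of $f^N$ on $I$ by $\measure{f^N(I)}$, crucially using the hypothesis $\delta<1$.

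Fix $\xi,\xi' \in I$ and let $J \subseteq I$ be the subarc joining them, parametrized by arclength on the circle. Since the Euclidean chord between two points is dominated by the length of any connecting curve, one has
\begin{equation*}
    |f^k(\xi) - f^k(\xi')| \leq \int_J |(f^k)'(s)| \, ds.
\end{equation*}
Applying the chain rule $(f^N)'(s) = (f^{N-k})'(f^k(s))\cdot(f^k)'(s)$ together with the iterated lower bound $|(f^{N-k})'|\geq K^{N-k}$ (a direct consequence of $|f'|\geq K$ on $\ddisk$), this becomes
\begin{equation*}
    |f^k(\xi) - f^k(\xi')| \leq K^{k-N}\int_J |(f^N)'(s)| \, ds \leq K^{k-N}\int_I |(f^N)'(s)| \, ds,
\end{equation*}
reducing the problem to showing that the total variation $\int_I |(f^N)'(s)|\,ds$ is bounded by $\delta$, up to the universal constant relating arclength and $\measure{\cdot}$.

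The total variation estimate is the main subtlety, because a priori $f^N$ could fold $I$ several times onto its image, so that $\int_I |(f^N)'(s)|\,ds$ strictly exceeds $\measure{f^N(I)}$. To rule this out, I would lift $f^N$ restricted to $I$ through the universal cover $\real\to\ddisk$ to a continuous map $h\colon I\to\real$. Since $|(f^N)'|\geq K^N>0$ everywhere on $\ddisk$, the derivative of $h$ never vanishes, so $h$ is strictly monotonic, and the total variation equals the length of the interval $h(I)$. If that length exceeded $1$ (in the normalization where $\ddisk$ has total length $1$), then the projection of $h(I)$ back down to $\ddisk$ would cover the entire circle and force $\measure{f^N(I)}=1$, contradicting the hypothesis $\measure{f^N(I)}\leq\delta<1$. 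Consequently the total variation equals $\measure{f^N(I)}\leq\delta$, and combining with the previous display yields the claimed inequality. The role of the assumption $\delta<1$ is precisely to exclude winding of the lift around the circle; the rest of the argument is elementary bookkeeping with the chain rule.
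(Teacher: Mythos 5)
The paper explicitly collects this lemma from Donaire--Nicolau without giving a proof, so there is no in-paper argument to compare against. Your reconstruction --- bound the chord by the total variation of the image path, use the chain rule and the uniform estimate $|f'|\geq K$ on $\ddisk$ to push $|(f^k)'|$ up to $K^{k-N}|(f^N)'|$, and then identify $\int_I|(f^N)'|$ with the length of $f^N(I)$ via a monotone lift through the universal cover --- is exactly the natural argument, and you have correctly isolated the role of the hypothesis $\delta<1$: since $|(f^N)'|$ never vanishes, the lift is strictly monotone, and $\measure{f^N(I)}<1$ is what prevents it from winding around the circle, so that the total variation really does equal the length of the image arc rather than some multiple of it.

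There is, however, a normalization issue that you flag ("up to the universal constant relating arclength and $\measure{\cdot}$") but then drop when you conclude "yields the claimed inequality." The Euclidean chord is dominated by \emph{unnormalized} arclength ($|e^{i\alpha}-e^{i\beta}|=2|\sin((\alpha-\beta)/2)|\leq|\alpha-\beta|$), whereas you equate the total variation with the \emph{normalized} length $\measure{f^N(I)}$; with the convention used in this paper ($\measure{\ddisk}=1$), these differ by a factor of $2\pi$, and your argument actually yields $|f^k(\xi)-f^k(\xi')|\leq 2\pi\,\delta\,K^{k-N}$. This has no bearing on anything the lemma is used for here, since the factor is absorbed into the constants appearing in Corollaries~\ref{corl:OscillationControl} and~\ref{corl:OscillationControlSequence} and Lemma~\ref{lemma:QuasiConstantDerivative}, but a careful write-up should run the whole argument in one normalization. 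A second, smaller slip: you say $f^N$ might a priori "fold" $I$ onto its image, but since $|(f^N)'|>0$ everywhere on $\ddisk$ there can be no folding; the only phenomenon the lift argument needs to exclude is winding, and that is exactly what it does.
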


    The next two results are direct consequences of the previous lemma.
    They will allow us to control the oscillation of partial sums of \eqref{eq:IteratesSeries}
    on certain well chosen arcs.

    \begin{corollary}[{\cite[Corollary~2.2]{ref:DonaireNicolau}}]
     \label{corl:OscillationControl}
     Consider a finite Blaschke product $f$ with $f(0) = 0$ which is not a rotation.
     Then there exists a constant $c = c(f) > 0$ such that the following holds.
     If $M < N$ are positive integers, $\{a_n\}_{n=M}^N$ a sequence of complex numbers
     and $I \subseteq \ddisk$ an arc with $\measure{f^N(I)} \leq \delta < 1,$
     then
     \begin{equation*}
      \left|\sum_{k=M}^N a_k (f^k(\xi)-f^k(\xi'))\right|
      \leq c\delta \left(\sum_{n=M}^{N}|a_n|^2\right)^{1/2}
     \end{equation*}
     for any $\xi,\xi' \in I.$
    \end{corollary}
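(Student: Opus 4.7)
The plan is to reduce the desired bound to a pointwise estimate from Lemma~\ref{lemma:QuantitativeExpansion} followed by a Cauchy--Schwarz argument. First I would push the absolute value inside by the triangle inequality to obtain
\begin{equation*}
\left|\sum_{k=M}^N a_k (f^k(\xi)-f^k(\xi'))\right| \leq \sum_{k=M}^N |a_k| \cdot |f^k(\xi)-f^k(\xi')|.
\end{equation*}
Then Lemma~\ref{lemma:QuantitativeExpansion}, applied with the hypothesis $\measure{f^N(I)} \leq \delta$, gives the pointwise bound $|f^k(\xi)-f^k(\xi')| \leq \delta K^{k-N}$ uniformly for $\xi,\xi' \in I$ and $M \leq k \leq N$, where $K = K(f) > 1$.

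Next I would separate the coefficient factor from the decaying geometric factor via the Cauchy--Schwarz inequality:
\begin{equation*}
\sum_{k=M}^N |a_k|\, \delta K^{k-N} \leq \delta \left(\sum_{k=M}^N |a_k|^2\right)^{1/2} \left(\sum_{k=M}^N K^{2(k-N)}\right)^{1/2}.
\end{equation*}
The second factor is controlled, uniformly in $M$ and $N$, by the convergent geometric series $\sum_{j=0}^\infty K^{-2j} = K^2/(K^2-1)$, which depends only on $f$. Absorbing this constant into $c = c(f) := K/\sqrt{K^2-1}$ produces the claimed inequality.

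I do not anticipate any real obstacle here: the backwards exponential contraction $|f^k(\xi) - f^k(\xi')| \leq \delta K^{k-N}$ supplied by the preceding lemma is precisely what is needed for Cauchy--Schwarz to yield the $\ell^2$ norm of $\{a_n\}$ on the right-hand side with a constant independent of the length of the block $[M,N]$. The only mild subtlety is to notice that the geometric series telescopes from $k=N$ downward rather than from $k=M$ upward, which is exactly what makes the bound scale-invariant in the length of the block.
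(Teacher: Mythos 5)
Your proof is correct and is the standard derivation of this corollary from Lemma~\ref{lemma:QuantitativeExpansion}; the paper itself only cites \cite[Corollary~2.2]{ref:DonaireNicolau} without re-proving it, and the argument there is the same triangle-inequality-plus-Cauchy--Schwarz reduction you outline, with the geometric series anchored at $k=N$.
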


    \begin{corollary}[{\cite[Corollary~2.3]{ref:DonaireNicolau}}]
     \label{corl:OscillationControlSequence}
     Consider a finite Blaschke product $f$ with $f(0) = 0$ which is not a rotation.
     Let $\{a_n\}$ be a sequence of complex numbers tending to zero.
     For each positive integer $N,$ consider an arc $I_N \subseteq \ddisk$
     so that $\sup_N \measure{f^N(I_N)} < 1.$
     Then
     \begin{equation*}
      \max \left\{\left|\sum_{k=1}^N a_k (f^k(\xi)-f^k(\xi'))\right|
      \colon \xi,\xi' \in I_N\right\} \longrightarrow 0
     \end{equation*}
     as $N \to \infty.$
    \end{corollary}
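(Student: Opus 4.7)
The plan is to combine the pointwise estimate produced by Lemma~\ref{lemma:QuantitativeExpansion} with an elementary geometric-series argument exploiting $a_n \to 0$. Set $\delta_0 = \sup_N \measure{f^N(I_N)} < 1$ and let $K = K(f) > 1$ be the expansion constant from Lemma~\ref{lemma:QuantitativeExpansion}. For each $N$, the hypothesis $\measure{f^N(I_N)} \leq \delta_0$ allows me to apply that lemma to the arc $I_N$, which yields
\[
|f^k(\xi) - f^k(\xi')| \leq \delta_0\, K^{k-N}
\qquad \text{for every } \xi,\xi' \in I_N \text{ and every } 1 \leq k \leq N.
\]
By the triangle inequality, it therefore suffices to show that
\[
S_N := \sum_{k=1}^N |a_k|\, K^{k-N} \longrightarrow 0
\qquad \text{as } N \to \infty.
\]

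The key point is that the weights $K^{k-N}$ concentrate the mass of $S_N$ near $k=N$, precisely where $|a_k|$ is small. Given $\e > 0$, I would choose $M = M(\e)$ so that $|a_k| < \e$ for all $k > M$, and split $S_N$ at the index $M$ (for $N > M$). The head $\sum_{k=1}^M |a_k| K^{k-N}$ is bounded by $\bigl(\max_{k \leq M} |a_k|\bigr) K^{M-N+1}/(K-1)$, which tends to $0$ as $N \to \infty$ with $M$ fixed. The tail $\sum_{k=M+1}^N |a_k| K^{k-N}$ is bounded by $\e \sum_{j=0}^\infty K^{-j} = \e K/(K-1)$, uniformly in $N$. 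Taking first $\limsup_{N \to \infty}$ and then $\e \to 0$ gives $S_N \to 0$, which closes the argument.

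I do not foresee any serious obstacle: the entire statement is essentially a consequence of Lemma~\ref{lemma:QuantitativeExpansion} combined with the splitting trick above. It is worth noting that one cannot simply apply Corollary~\ref{corl:OscillationControl} to the full sum from $1$ to $N$, since the hypothesis $a_n \to 0$ provides no control on $\bigl(\sum_{k=1}^N |a_k|^2\bigr)^{1/2}$; it is instead the termwise bound from Lemma~\ref{lemma:QuantitativeExpansion} that makes the argument close, thanks to the geometric factor $K^{k-N}$, which produces absolute convergence of the auxiliary sum regardless of the $\ell^2$ behaviour of $\{a_n\}$.
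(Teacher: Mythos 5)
Your argument is correct and complete. Note that the paper itself states Corollary~\ref{corl:OscillationControlSequence} without proof, quoting it directly from Donaire and Nicolau; there is therefore no in-paper proof to compare against. Your derivation — bounding termwise via Lemma~\ref{lemma:QuantitativeExpansion} to obtain $|f^k(\xi)-f^k(\xi')| \leq \delta_0 K^{k-N}$ and then splitting $\sum_{k=1}^N |a_k| K^{k-N}$ at a fixed threshold $M$ so that the head is killed by the factor $K^{M-N}$ and the tail by $a_k \to 0$ together with the geometric series $\sum_j K^{-j}$ — is the standard and essentially forced route, and is surely the one used in the cited reference. Your closing remark is also well placed: Corollary~\ref{corl:OscillationControl} would give a bound proportional to $\bigl(\sum_{k=1}^N |a_k|^2\bigr)^{1/2}$, over which the hypothesis $a_n \to 0$ gives no control, so the termwise geometric weights are genuinely what rescue the argument.
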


    The result that follows relates the behaviour of $f$ on $\ddisk$ and on $\disk.$
    In particular, it relates the sizes of $f^n(z(I))$ and $f^n(I).$
    This is a minor modification of \cite[Lemma~2.4]{ref:DonaireNicolau},
    although the proof is identical.

    \begin{lemma}
     \label{lemma:RelationInteriorBoundary}
     \begin{enumerate}[(a)]
      \item
      \label{item:InteriorToBoundary}
      For any $0 < \gamma < 1,$ there exists $\delta = \delta(\gamma) > 0$
      such that if $f$ is a finite Blaschke product and $z \in \disk$
      satisfies $|f(z)| \leq \gamma,$ then $\measure{f(I(z))} \geq \delta.$
      \item
      \label{item:BoundaryToInterior}
      Consider a finite Blaschke product $f$ with $f(0) = 0$ which is not a rotation.
      Then, given $0 < \delta < 1/4,$ there exists $0 < \gamma = \gamma(f,\delta) < 1$
      such that if $N$ is a positive integer and $I \subseteq \ddisk$ is an arc with
      $\delta \leq \measure{f^N(I)} \leq 4\delta,$ then $|f^N(z(I))| \leq \gamma.$
     \end{enumerate}
    \end{lemma}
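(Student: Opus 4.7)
For part (a), I would use the harmonic-measure transformation $w(z, f^{-1}(E)) = w(f(z), E)$, valid for any inner function $f$ and Borel set $E \subseteq \ddisk$. Taking $E = f(I(z))$ and using $I(z) \subseteq f^{-1}(f(I(z)))$ gives $w(f(z), f(I(z))) \geq w(z, I(z)) \geq c_0$ for an absolute constant $c_0 > 0$ (obtained from the elementary lower bound $P_z(\xi) \geq \tfrac{1}{4(1-|z|)}$ on $I(z)$, which yields $w(z, I(z)) \geq \tfrac{1}{4}$). On the other hand, $\|P_{f(z)}\|_\infty \leq 2/(1-|f(z)|) \leq 2/(1-\gamma)$, so $w(f(z), f(I(z))) \leq 2\measure{f(I(z))}/(1-\gamma)$. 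Combining the two bounds yields $\measure{f(I(z))} \geq c_0(1-\gamma)/2$, so one may take $\delta(\gamma) := c_0(1-\gamma)/2$.

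For part (b), the analogous harmonic-measure estimate gives only the \emph{wrong-direction} bound on $1 - |f^N(z(I))|$. Instead, I would exploit the uniform boundary expansion $K := \min_{\ddisk}|f'| > 1$ (which holds since $f(0)=0$ and $f$ is not a rotation; cf.\ Lemma~\ref{lemma:QuantitativeExpansion}) together with the standard inner-function identity
\begin{equation*}
    \xi \, g'(\xi)/g(\xi) = |g'(\xi)|, \qquad \xi \in \ddisk, \ g \text{ inner with } g(\xi) \neq 0,
\end{equation*}
which follows from differentiating $|g(e^{i\theta})|^2 \equiv 1$. Set $\eta := z(I)/|z(I)| \in I$ (the midpoint of $I$), $\eta_k := f^k(\eta) \in \ddisk$, $z_k := f^k(z(I))$, and $\ell_k := 1 - |z_k|$. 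A first-order Taylor expansion of $f$ at $\eta_k$, combined with the above identity applied to $g = f$ at $\xi = \eta_k$, gives $z_{k+1} = \eta_{k+1}(1 - \ell_k|f'(\eta_k)|) + O(\ell_k^2)$, whence the recursion $\ell_{k+1} = \ell_k|f'(\eta_k)| + O(\ell_k^2)$.

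The hypothesis $\measure{f^N(I)} \leq 4\delta$ combined with Lemma~\ref{lemma:QuantitativeExpansion} forces $|f^k(\xi) - f^k(\xi')| \leq 4\delta K^{k-N}$ for $\xi, \xi' \in I$ and $1 \leq k \leq N$. Combined with the above recursion, this yields $\ell_k \leq C\delta K^{k-N}$ for all $0 \leq k \leq N$ and some $C = C(f)$, so in particular $\sum_k \ell_k = O(\delta)$. Iterating the Taylor relation therefore gives
\begin{equation*}
    \ell_N = \measure{I} \, |(f^N)'(\eta)| \, (1 + O(\delta)),
\end{equation*}
while a parallel bounded-distortion estimate for $(f^N)'$ on $I$ (a consequence of the same diameter bounds) gives $\measure{f^N(I)} = \measure{I} \, |(f^N)'(\eta)| \, (1 + O(\delta))$. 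Comparing these, $\ell_N \asymp \measure{f^N(I)} \geq \delta$, and hence $|f^N(z(I))| \leq 1 - c(f)\delta =: \gamma(f,\delta) < 1$, valid for $\delta < 1/4$.

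The main technical obstacle is the careful bookkeeping of the linearization and distortion error terms, verifying that the accumulated $O(\ell_k^2)$ corrections remain subordinate to the leading $\delta$-term along the orbit. This is carried out exactly as in the original proof of~\cite[Lemma~2.4]{ref:DonaireNicolau}, whose argument applies with only the minor modifications needed to accommodate the two-sided hypothesis $\delta \leq \measure{f^N(I)} \leq 4\delta$.
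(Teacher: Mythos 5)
The paper does not actually supply a proof of this lemma: it states it ``is a minor modification of \cite[Lemma~2.4]{ref:DonaireNicolau}, although the proof is identical,'' so there is no internal argument here to compare against. Judging your proposal on its own merits, and against what that reference must contain:

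\emph{Part (a).} Your argument is correct and clean. The conformal invariance $w(z,f^{-1}(E))=w(f(z),E)$, the monotonicity $f^{-1}(f(I(z)))\supseteq I(z)$, the lower bound $w(z,I(z))\geq c_0$ (an absolute constant), and the Poisson-kernel supremum $\|P_{f(z)}\|_\infty\leq 2/(1-\gamma)$ combine exactly as you say to give $\measure{f(I(z))}\geq c_0(1-\gamma)/2$. (The numerical constant $1/4$ in your pointwise lower bound for $P_z$ on $I(z)$ is not quite right once you unwind the normalization $\measure{I(z)}=1-|z|$, but that is immaterial.)

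\emph{Part (b).} The overall strategy --- a first-order linearization of $f$ along the orbit of $z(I)$ combined with bounded distortion to show $1-|f^N(z(I))|\asymp\measure{f^N(I)}$, and hence $\geq c(f)\delta$ --- is the right one, and is the mechanism behind \cite[Lemma~2.4]{ref:DonaireNicolau}. However, there is a circularity in the step where you claim the pointwise bound $\ell_k\leq C\delta K^{k-N}$ ``follows from the hypothesis $\measure{f^N(I)}\leq 4\delta$ combined with Lemma~\ref{lemma:QuantitativeExpansion} and the recursion.'' Lemma~\ref{lemma:QuantitativeExpansion} only controls boundary arc lengths $\measure{f^k(I)}$, not interior depths $\ell_k$, and your Taylor recursion $\ell_{k+1}=\ell_k|f'(\eta_k)|+O(\ell_k^2)$ is itself only valid once one already knows $\ell_k$ is small; moreover a naive forward induction from $\ell_0=\measure{I}$ fails because $|f'(\eta_k)|$ can be as large as $\max_{\ddisk}|f'|>K$ at individual steps, so the one-step factor can overshoot $K$. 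You need an a priori bound before the recursion can be run. The most economical fix is to notice that the very harmonic-measure computation you used in part (a) gives, applied at each intermediate level, $1-|f^k(z(I))|\leq \tfrac{2}{c_0}\measure{f^k(I)}\leq\tfrac{8\delta}{c_0}K^{k-N}$; this is the a priori upper bound needed, and it is worth remarking that although you correctly observe that harmonic measure gives the \emph{wrong} direction for the final conclusion (a lower bound on $\ell_N$), it gives exactly the \emph{right} direction for the intermediate estimate (an upper bound on all $\ell_k$). Alternatively, one can bootstrap: induct on $k$ with $\ell_j\leq 2C(f)\measure{f^j(I)}$ as the hypothesis, using Lemma~\ref{lemma:QuasiConstantDerivative}-type distortion to compare $|(f^k)'(\eta)|$ with $\measure{f^k(I)}/\measure{I}$ and close the induction for $\delta$ small. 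You also suppress the issue that $z_k$ drifts tangentially off the ray through $\eta_k$ by second-order amounts that get multiplied by the derivative factors in subsequent steps; the accumulated drift is $O(\delta^2)$ and hence harmless, but it needs to be tracked. You flag both points as ``bookkeeping ... carried out exactly as in \cite[Lemma~2.4]{ref:DonaireNicolau},'' which is an honest deferral that mirrors what the paper itself does; just be aware that the phrase ``combined with the above recursion'' as written asserts something that does not follow without the additional ingredient above.
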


    We finish with a lemma that states the locally constant character of the derivatives of iterates of finite Blaschke products.
    The proof of this estimate is contained in \cite[Proof~of~Lemma~2.4]{ref:DonaireNicolau}.
    However, we include it here explicitly for the convenience of the reader.

    \begin{lemma}
        \label{lemma:QuasiConstantDerivative}
        Consider a finite Blaschke product $f$ with $f(0) = 0$ which is not a rotation
        and let $N$ be a positive integer.
        Then there exists a constant $1 < C = C(f) < \infty$ such that,
        for any arc $I \subseteq \ddisk$ with $\measure{f^N(I)} < 1,$
        it holds that
        \begin{equation*}
            |(f^N)'(\xi)| \leq C |(f^N)'(\xi^\ast)|
        \end{equation*}
        for all $\xi, \xi^\ast \in I.$
    \end{lemma}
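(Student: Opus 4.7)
The plan is a standard bounded-distortion argument. Starting from the chain rule
\[
\log |(f^N)'(\xi)| - \log |(f^N)'(\xi^*)| = \sum_{k=0}^{N-1} \bigl(\log|f'(f^k(\xi))| - \log|f'(f^k(\xi^*))|\bigr),
\]
we exploit two facts. First, because $f$ is a finite Blaschke product that fixes the origin and is not a rotation, $f'$ extends smoothly to a neighbourhood of $\overline{\disk}$ and satisfies $K := \min_{\xi \in \ddisk} |f'(\xi)| > 1$; thus $\log|f'|$ is smooth on $\ddisk$, and in particular Lipschitz there with some constant $L = L(f)$. This yields
\[
\bigl| \log |(f^N)'(\xi)| - \log |(f^N)'(\xi^*)| \bigr| \leq L \sum_{k=0}^{N-1} |f^k(\xi) - f^k(\xi^*)|.
\]
Second, setting $\delta = \measure{f^N(I)} < 1$, Lemma~\ref{lemma:QuantitativeExpansion} directly provides $|f^k(\xi) - f^k(\xi^*)| \leq \delta K^{k-N}$ for every $1 \leq k \leq N$.

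For the missing $k = 0$ term, we use that since $f^N(I)$ is a proper arc of $\ddisk$, the map $f^N|_I$ must be injective. Combined with the pointwise lower bound $|f'| \geq K$ and the chain rule for $f^N$, this gives $\measure{I} \leq K^{-N} \measure{f^N(I)} = \delta K^{-N}$, and passing from arc length to chord length yields $|\xi - \xi^*| \leq 2\pi \delta K^{-N}$. Summing the resulting geometric series,
\[
\sum_{k=0}^{N-1} |f^k(\xi) - f^k(\xi^*)| \leq 2\pi \delta + \delta \sum_{k=1}^{N-1} K^{k-N} \leq 2\pi + \frac{1}{K-1},
\]
where in the last step we used $\delta < 1$. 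Exponentiating the Lipschitz bound above produces the claim with $C = \exp \bigl( L(2\pi + 1/(K-1)) \bigr)$, a constant depending only on $f$.

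No substantial obstacle is expected: this is a routine bounded-distortion computation. The only points requiring care are the Lipschitz property of $\log|f'|$, which is immediate once one knows $|f'|$ is bounded below away from $0$ on $\ddisk$, and the treatment of the $k = 0$ term, which the quantitative expansion lemma does not cover directly but which follows from the injectivity of $f^N$ on $I$ under the hypothesis $\measure{f^N(I)} < 1$. Everything else is bookkeeping with constants depending only on $f$.
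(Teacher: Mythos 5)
Your proof is correct and takes essentially the same route as the paper: chain rule to decompose $\log|(f^N)'|$, a Lipschitz bound on $\log|f'|$ over $\ddisk$ (which the paper achieves via $|\log x - \log y| \leq |x - y|$ for $x,y > 1$ together with a mean-value bound using $\max|f''|$), and Lemma~\ref{lemma:QuantitativeExpansion} to sum the resulting geometric decay of iterate differences. One small improvement: the paper's displayed chain-rule sum is written over $k = 1,\dots,N-1$, tacitly dropping the $k=0$ factor, whereas you handle that term explicitly using the injectivity of $f^N$ on $I$ and the contraction $\measure{I} \leq K^{-N}\measure{f^N(I)}$, which tidies up that minor omission.
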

    \begin{proof}
     Since for any $x,y > 1$ it holds that $|\log x - \log y| \leq |x-y|,$
     we get that
     \begin{equation*}
      \left|\log \frac{|(f^N)'(\xi)|}{|(f^N)'(\xi^\ast)|}\right| =
      \left|\sum_{k=1}^{N-1} \log \frac{|f'(f^k(\xi))|}{|f'(f^k(\xi^\ast))|}\right|
      \leq \sum_{k=1}^{N-1} |f'(f^k(\xi))-f'(f^k(\xi^\ast))|.
     \end{equation*}
     Now, take $C_1 = \max \{|f''(\xi)|\colon \xi \in \ddisk\}.$
     Then, the previous sum is bounded by
     \begin{equation*}
      \sum_{k=1}^{N-1} |f'(f^k(\xi))-f'(f^k(\xi^\ast))| \leq
      C_1 \sum_{k=1}^{N-1} |f^k(\xi)-f^k(\xi^\ast)|.
     \end{equation*}
     Since $\measure{f^N(I)} \leq \delta$ for some $\delta < 1,$ Lemma~\ref{lemma:QuantitativeExpansion}
     yields the desired result with some constant $C > 1$ that only depends on $C_1$
     and $\min \{|f'(\xi)|\colon \xi \in \ddisk\}.$
    \end{proof}

\section{The construction of Cantor like sets}
\label{sec:Construction}

Consider a finite Blaschke product $f$ and a sequence $\{a_n\}$ satisfying
the hypotheses of Theorem~\ref{thm:PaleyWeissDimensionThm}.
Consider as well a measure function $\varphi.$
As mentioned in the introduction, in order to show that for a given $w \in \complex$
the set
\begin{equation*}
 A(w) = \set{\xi \in \ddisk\colon \sum_{n=1}^\infty a_n f^n(\xi) \text{ converges and } \sum_{n=1}^\infty a_n f^n(\xi)=w}
\end{equation*}
has $\hausdorff{\varphi}{A(w)} > 0,$
we will construct a Cantor like set $E \subseteq A(w)$ for which $\hausdorff{\varphi}{E} > 0$
will hold.
The following lemma contains a collection of sufficient conditions for a Cantor like set
to have positive $\varphi$-Hausdorff measure.
This result is just a minor modification of~\cite[Lemma~2]{ref:HungerfordThesis}
(see also~\cite[Section~3]{ref:RohdeBoundaryBehaviourBlochFunctions}
and~\cite[Theorem~10.5]{ref:PommerenkeBoundaryBehaviourConformalMaps} for further reference).

\begin{lemma}
\label{lemma:HausdorffMeasure}
Consider a measure function $\varphi$ such that the function
\begin{equation*}
 \psi(t) \coloneq \frac{\varphi(t)}{t}
\end{equation*}
is decreasing and satisfies $\lim_{t \to 0^+} \psi(t) = \infty.$
Let $(\cF_n)^{\infty}_{n=0}$ be a sequence of nonempty families of pairwise disjoint
nondegenerate closed arcs of $\ddisk$ such that the following hold:
\begin{enumerate}[(i)]
 \item
 For all $n=0,1,2\ldots$ and for all $J\in\cF_n,$
 there exists $I\in\cF_{n+1}$ such that $I\subseteq J.$
 \item
 For all $n=1,2,\ldots$ and for all $I\in\cF_n,$
 there exists $J\in\cF_{n-1}$ such that $I\subseteq J.$
 \item
 \label{item:ArcCoveringRequirement}
 There exists $0 < c < 1$ such that
 \begin{equation*}
  \sum_{\substack{I\in\cF_{n+1}\\I\subseteq J}}\measure{I} \geq c \cdot \measure{J}
 \end{equation*}
 for all $J\in\cF_n$ and for all $n=0,1,2,\ldots.$
 \item
 \label{item:ArcSizeBound}
 For all $n=0,1,2,\ldots$ and for all $J\in\cF_n$
 it holds that
 if $I\in\cF_{n+1}$ with $I\subseteq J,$ then
 \begin{equation*}
  \frac{\measure{I}}{\measure{J}}
  \leq \min\left\{\frac{c}{2},\frac{\psi^{-1}(c^{-(n+1)})}{\psi^{-1}(c^{-n})}\right\}.
 \end{equation*}
\end{enumerate}

Set
\begin{equation*}
 E \coloneq \bigcap_{n=1}^{\infty}\left(\bigcup_{I\in\cF_n}I\right).
\end{equation*}
Then, there holds $\hausdorff{\varphi}{E} > 0.$
\end{lemma}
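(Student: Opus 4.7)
The plan is to construct a nonzero Borel measure $\mu$ supported on $E$ that satisfies a Frostman-type estimate $\mu(I) \leq C\varphi(\measure{I})$ for every arc $I \subseteq \ddisk$ of sufficiently small length; the mass distribution principle then gives $\hausdorff{\varphi}{E} \geq \mu(E)/C > 0$.

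After restricting to the subtree of descendants of a single $J_0 \in \cF_0$ sufficiently small that $\measure{J_0} \leq \psi^{-1}(1)$ (which preserves all four hypotheses, the reduction being possible since iterating (iv) forces $\measure{J} \to 0$ as the level grows), I would define $\mu$ recursively by $\mu(J_0) = 1$ and
\begin{equation*}
\mu(I) = \mu(J)\cdot\frac{\measure{I}}{\sum_{I'\in\cF_{n+1},\,I'\subseteq J}\measure{I'}}, \qquad I \in \cF_{n+1},\,I \subseteq J \in \cF_n.
\end{equation*}
Hypotheses (i) and (ii) make this assignment consistent, and $\mu$ is a Borel probability measure carried by $E$. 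Iterating the recursion and using (iii) to bound each denominator below by $c\measure{J}$ yields $\mu(J) \leq c^{-n}\measure{J}$ for every $J \in \cF_n$. In parallel, iterating (iv) telescopically gives $\measure{J} \leq \psi^{-1}(c^{-n})$, and since $\psi$ is decreasing this translates into $\psi(\measure{J}) \geq c^{-n}$. Multiplying the two inequalities yields the Frostman bound $\mu(J) \leq \varphi(\measure{J})$ on every arc of the construction.

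To extend the bound to an arbitrary short arc $I$ meeting $E$, I would select the unique $n$ for which $\psi^{-1}(c^{-(n+1)}) < \measure{I} \leq \psi^{-1}(c^{-n})$. Arcs of $\cF_{n+1}$ then have length strictly smaller than $\measure{I}$, so
\begin{equation*}
\mu(I) \leq \sum_{\substack{I'\in\cF_{n+1}\\I'\cap I\neq\emptyset}} \mu(I').
\end{equation*}
A short combinatorial argument using the pairwise disjointness of the $\cF_{n+1}$-arcs together with the $c/2$ separation from (iv) then controls the sum: arcs fully contained in $I$ contribute a total length at most $\measure{I}$, while at most two arcs straddle the endpoints of $I$ (each of length at most $\psi^{-1}(c^{-(n+1)}) \leq \measure{I}$). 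Invoking the Frostman bound on each $I'$ and using $\varphi(\measure{I'}) \leq \varphi(\psi^{-1}(c^{-(n+1)})) = c^{-(n+1)}\psi^{-1}(c^{-(n+1)})$ together with the comparison $\varphi(\measure{I}) \geq c^{-n}\measure{I} \geq c^{-n}\psi^{-1}(c^{-(n+1)})$ delivers $\mu(I) \leq C\varphi(\measure{I})$, and the mass distribution principle concludes the proof.

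The main obstacle I foresee is precisely this final extension step: because $\varphi$ is not assumed to be doubling, the bound on $\mu(I)$ cannot rely on any direct $\varphi$-comparison between adjacent scales and must instead exploit the precise size choice $\psi^{-1}(c^{-(n+1)}) < \measure{I} \leq \psi^{-1}(c^{-n})$ together with the $c/2$ ratio in (iv), which simultaneously caps the $\mu$-mass and the total length of $\cF_{n+1}$-arcs meeting $I$ in a way that matches the target $\varphi(\measure{I})$.
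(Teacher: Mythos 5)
Your proposal follows the paper's proof essentially line by line: same recursive construction of the Frostman measure on the Cantor tree, same scale selection $\psi^{-1}(c^{-(n+1)}) < \measure{I} \leq \psi^{-1}(c^{-n})$ for an arbitrary arc, same three-arc combinatorial bound on the overlap (with the $c/2$ cap in (iv) guaranteeing the straddling arcs are no longer than $I$), and the same mass distribution principle at the end. The only cosmetic difference is that the paper bounds $\sum_{I' \cap I \neq \emptyset}\measure{I'} \leq 3\measure{I}$ directly and then multiplies by $c^{-(n+1)}$, whereas you route the straddling arcs through the pointwise Frostman bound and compare $\varphi$ across adjacent scales — the resulting constant is the same.
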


Note that in this lemma, we talk about $\psi^{-1}(t),$
even though $\psi$ is only decreasing, instead of strictly decreasing,
and it might be only continuous to the right, instead of actually continuous.
Here and in the rest of the paper, by $\psi^{-1}$ we mean the generalised inverse
\begin{equation*}
    \psi^{-1}(t) = \inf \{s > 0\colon \psi(s) > t\}.
\end{equation*}

\begin{proof}
First of all, we will assume without loss of generality that
$\cF_0$ contains a single closed arc $J_0,$ since the general case follows from the same argument.
We will also assume that $\measure{J_0} \leq \psi^{-1}(1),$
since only the asymptotic behaviour of $\varphi(t)$ as $t \to 0$ is relevant.
Let us denote
\begin{equation*}
 E_n = \bigcup_{I\in\cF_n}I
\end{equation*}
for each integer $n \geq 0.$
We construct a sequence of measures $\{\nu_n\}$ such that $\support{\nu_n} = E_n$ for each $n.$
Define $\nu_0$ by $\nu_0(\ddisk \setminus J_0) = 0$ and by
\begin{equation*}
 \nu_0 = \frac{1}{\measure{J_0}} \mathrm{m}
\end{equation*}
restricted to $J_0.$
That is, $\nu_0$ is the probability measure that is equal to a multiple of
the Lebesgue measure restricted on $J_0.$
Next, assume that we have defined $\nu_{n-1}$ for some integer $n \geq 1$ and
let us construct $\nu_n.$
Observe that $\ddisk \setminus E_{n-1} \subseteq \ddisk \setminus E_n$ and set
\begin{equation*}
 \nu_n(\ddisk \setminus E_n) = 0.
\end{equation*}
For each $I \in \cF_n$ define the restriction of $\nu_n$ on $I$ to be
\begin{equation*}
 \nu_n = \frac{\nu_{n-1}(J)}{\sum_{\substack{K\in\cF_n\\K \subseteq J}} \measure{K}} \mathrm{m},
\end{equation*}
where $J$ is the unique arc in $\cF_{n-1}$ such that $I \subseteq J.$
It is clear that $\nu_n,$ $n \geq 1,$ are all probability measures since
for every $J \in \cF_{n-1}$ we have that
\begin{equation*}
 \sum_{\substack{I\in\cF_n\\I \subseteq J}} \nu_n(I) = \nu_{n-1}(J),
\end{equation*}
so that, by an induction argument, one has $\nu_n(\ddisk) = \sum_{I\in\cF_n} \nu_n(I) = \nu_0(J_0) = 1.$
Moreover, for every nonnegative integer $n$ and for each $I \in \cF_n,$ it holds
by condition \eqref{item:ArcCoveringRequirement} that
\begin{equation}
 \label{eq:SequenceGrowthCondition}
 \nu_n(I) \leq \frac{c^{-n}}{\measure{J_0}} \measure{I}.
\end{equation}
A standard limiting argument as in the proof of~\cite[Lemma~2]{ref:HungerfordThesis} allows us to define a probability Borel measure $\nu$
supported on $E$ such that $\nu(A) = \lim_n \nu_n(A)$ for each Borel set $A.$
In addition, since by construction
\begin{equation*}
 \nu_{n+m}(I) = \nu_n(I)
\end{equation*}
for every $n,m \geq 0$ and $I \in \cF_n,$ we have by \eqref{eq:SequenceGrowthCondition} that
\begin{equation}
 \label{eq:LimitGrowthCondition}
 \nu(I) \leq \frac{c^{-n}}{\measure{J_0}} \measure{I}
\end{equation}
for every $I \in \cF_n$ and $n \geq 0.$

Consider now an arbitrary arc $K$ of $\ddisk,$ for which we can assume that $\measure{K}\leq\psi^{-1}(1).$
Then, there exists a nonnegative integer $N$ such that
\begin{equation*}
 \psi^{-1}(c^{-N-1})\leq\measure{K}\leq\psi^{-1}(c^{-N}).
\end{equation*}
Set $\cF(K) \coloneq \{I\in\cF_{N+1}\colon I \cap K \neq \emptyset\}.$
Observe that, by \eqref{item:ArcSizeBound}, for any nonnegative integer $n$
and for each $I \in \cF_n$ we have that
\begin{equation*}
 \measure{I} \leq \psi^{-1}(c^{-n}).
\end{equation*}
Thus
\begin{equation*}
 \measure{I} \leq \psi^{-1}(c^{-N-1}) \leq \measure{K}
\end{equation*}
for all $I \in \cF(K).$
Then, since $K$ is an arc and $\cF(K)$ is a pairwise disjoint family of arcs $I$
with $I \cap K \neq \emptyset$ and $\measure{I} \leq \measure{K},$
one can easily see that $\sum_{I\in\cF(K)} \measure{I} \leq 3\measure{K}.$
Therefore, by this last observation and by \eqref{eq:LimitGrowthCondition}, we get that
\begin{align*}
 \nu(K) &\leq \sum_{I\in\cF(K)}\nu(I) \leq \frac{3}{\measure{J_0}} c^{-N-1}\measure{K}\\
 &\leq \frac{3c^{-1}}{\measure{J_0}} \psi(\measure{K})\measure{K}
 = \frac{3c^{-1}}{\measure{J_0}} \varphi(\measure{K}).
\end{align*}
An application of a minor variation of one of the implications of Frostman's lemma (see e.g.~\cite[Theorem~8.8]{ref:MattilaGeometryOfSetsAndMeasures}) yields then the desired result.
\end{proof}

For the rest of this section, we set
\begin{equation*}
 C_0 \coloneq \min \{|f'(\xi)|\colon \xi \in \ddisk\} > 1.
\end{equation*}
The next lemma will be our basic construction block.

\begin{lemma}
        \label{lemma:AlternativeInductiveStep}
        Consider a finite Blaschke product $f$ which is not a rotation and such that $f(0) = 0.$
        Then, there exist constants $0 < \e = \e(f) < 1,$
        $0 < c = c(f) < 1,$ $0 < \eta = \eta(f) < 1/4,$ $1 < K = K(f)<\infty$
        and a positive integer $d = d(f)$ such that the following holds.
        Let $M,N$ be positive integers with $M < N,$
        consider $z \in \disk$ such that $|f^M(z)| < \e$
        and let $\{a_n\}_{n=M}^N$ be a sequence of complex numbers.
        Then, there exists a nonempty finite family $\cF$ of pairwise disjoint closed subarcs of $dI(z)$ such that
        \begin{equation}
            \label{eq:IndStepRealPartBound}
            \re{\sum_{n=M}^{N} a_n f^n(\xi)} \geq c\left(\sum_{k=M}^{N}|a_k|^2\right)^{1/2},
            \qquad \text{ for every } \xi \in \bigcup_{I\in\cF} I,
        \end{equation}
        \begin{equation}
            \label{eq:IndStepCoveringRequirement}
            \sum_{I\in\cF} \measure{I} \geq \frac{c}{2d}\measure{dI(z)},
        \end{equation}
        \begin{equation}
            \label{eq:IndStepSizeControl}
            \eta \leq \measure{f^N(I)} \leq 4\eta, \qquad \text{ for each } I \in \cF
        \end{equation}
        and
        \begin{equation}
            \label{eq:IndStepGrowthCondition}
            \measure{I} \leq K C_0^{-(N-M)}\measure{dI(z)}, \qquad \text{ for each } I \in \cF.
        \end{equation}
\end{lemma}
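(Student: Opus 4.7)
The plan is to combine the localized real-part estimate of Corollary~\ref{corl:LocalisedArturLemma} with a pull-back construction that exploits the boundary expansion of $f$. Let $c_0$, $\e_0$, $d_0$ denote the constants from Corollary~\ref{corl:LocalisedArturLemma}, and let $C_1$ denote the constant from Corollary~\ref{corl:OscillationControl}. I set $\e := \e_0$, $d := d_0$, and fix $\eta \in (0,1/4)$ small enough that $4 C_1 \eta \leq c_0/2$ together with the arc-fitting requirements spelled out below; the constant $c = c(f)$ and the number $K = K(f)$ will be chosen at the end so that all the estimates line up.

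First, I apply Corollary~\ref{corl:LocalisedArturLemma} to obtain a Borel set $D \subseteq dI(z)$ with $\measure{D} \geq c_0 (1-|z|)$ on which $\re{\sum_{n=M}^{N} a_n f^n(\xi)} \geq c_0 \left(\sum_{n=M}^{N}|a_n|^2\right)^{1/2}$. For each $\xi \in D$, let $V_\xi \subseteq \ddisk$ denote the arc of length $\eta$ centered at $f^N(\xi)$. Since $|f'|>1$ on $\ddisk$, the restriction $f^N|_{\ddisk}$ is a smooth covering map, so for $\eta$ small enough the connected component $J_\xi$ of $(f^N)^{-1}(V_\xi)$ containing $\xi$ is a closed arc mapped homeomorphically onto $V_\xi$ by $f^N$; in particular $\measure{f^N(J_\xi)} = \eta$, so \eqref{eq:IndStepSizeControl} holds. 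Corollary~\ref{corl:OscillationControl} then bounds the oscillation of $\re{\sum_{n=M}^{N} a_n f^n}$ on $J_\xi$ by $C_1 \eta \left(\sum_{n=M}^{N}|a_n|^2\right)^{1/2} \leq (c_0/2)\left(\sum_{n=M}^{N}|a_n|^2\right)^{1/2}$, which combined with the estimate on $D$ yields \eqref{eq:IndStepRealPartBound} throughout $J_\xi$ with constant $c_0/2$.

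The key technical step is to bound $\measure{J_\xi}$ and verify that $J_\xi \subseteq dI(z)$. Lemma~\ref{lemma:QuantitativeExpansion} (with $\delta = \eta$ and $K = C_0$) applied to $J_\xi$ gives $\measure{f^M(J_\xi)} \leq \eta\, C_0^{-(N-M)}$. On the other hand, the hypothesis $|f^M(z)| < \e$ combined with Lemma~\ref{lemma:RelationInteriorBoundary}\eqref{item:InteriorToBoundary} applied to the Blaschke product $f^M$ gives $\measure{f^M(I(z))} \geq \delta_0$ for some $\delta_0 = \delta_0(\e) > 0$. Together with Lemma~\ref{lemma:QuasiConstantDerivative} (applied to $f^M$ first on the arc $I(z)$, then on a comparable arc inside $dI(z)$), this produces a uniform lower bound of the form $|(f^M)'(\xi^\ast)| \geq c_2\, \delta_0/(1-|z|)$ for $\xi^\ast \in dI(z)$, with a constant $c_2 = c_2(f) > 0$. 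A further application of Lemma~\ref{lemma:QuasiConstantDerivative} on $J_\xi$ then yields $\measure{J_\xi} \leq C\, \measure{f^M(J_\xi)}/|(f^M)'(\xi^\ast)| \leq K\, C_0^{-(N-M)} \measure{dI(z)}$ for a suitable $K = K(f) > 1$, which is \eqref{eq:IndStepGrowthCondition}; shrinking $\eta$ further if needed (depending on $f$ through $\delta_0$ and $d$) also guarantees $J_\xi \subseteq dI(z)$.

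Finally, I extract a pairwise disjoint finite subfamily $\cF$ of $\{J_\xi\colon \xi \in D\}$ by the $1/3$-covering lemma for arcs on the circle (first restricting to a compact subset of $D$ of almost full measure and choosing a finite subcover, then applying the covering lemma), obtaining $\sum_{I \in \cF} \measure{I} \geq \measure{D}/4 \geq (c_0/4)(1-|z|) \geq (c/(2d))\measure{dI(z)}$ once $c \leq c_0/2$, which establishes \eqref{eq:IndStepCoveringRequirement}. I expect the main obstacle to be the careful bookkeeping in the third paragraph: precisely converting the deep-interior hypothesis $|f^M(z)| < \e$ into a uniform lower bound for $|(f^M)'|$ on $dI(z)$ (where $dI(z)$ itself may or may not satisfy the smallness hypothesis of Lemma~\ref{lemma:QuasiConstantDerivative} directly, so some care is needed in partitioning it into subarcs on which the lemma does apply), and choosing $\eta$ consistently with every one of the resulting constraints so that $J_\xi$ is genuinely contained in $dI(z)$.
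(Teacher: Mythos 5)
Your route is genuinely different from the paper's: you lift a small arc $V_\xi$ through $f^N$ at each point of the good set $D$ and then extract a disjoint subfamily by a Vitali covering argument, whereas the paper first partitions $dI(z)$ into consecutive arcs $J_1,\dots,J_L$ with $\measure{f^N(J_i)}\asymp\delta_1$, keeps those that meet the good set $E,$ and trims each to a nondegenerate subarc $\widetilde J_i\subseteq\mathrm{Int}(J_i)$ retaining half the measure of $E\cap J_i$ and half of $\measure{f^N(J_i)}.$ The partition--and--trim scheme buys two things automatically that your scheme has to earn, and these are precisely the two issues you flag as ``careful bookkeeping'' in the third paragraph --- but they are where the real work lies, not just bookkeeping.

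First, the growth estimate~\eqref{eq:IndStepGrowthCondition}. A pointwise lower bound $|(f^M)'(\xi^\ast)|\gtrsim\delta_0/(1-|z|)$ uniformly over $\xi^\ast\in dI(z)$ does not follow from Lemma~\ref{lemma:QuasiConstantDerivative} applied to $I(z)$ or to $dI(z),$ since $\measure{f^M(I(z))}$ (and a fortiori $\measure{f^M(dI(z))}$) can equal $1,$ and then the lemma's hypothesis fails and $|(f^M)'|$ can oscillate substantially along the arc. To recover a uniform pointwise bound you would have to partition $dI(z)$ into subarcs whose $f^M$-images have controlled length, apply quasi-constancy on each piece, and then assemble --- which essentially reinvents the paper's partition. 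The paper sidesteps the pointwise bound entirely: it chooses an intermediate arc $I\subseteq\widetilde I\subseteq dI(z)$ with $\measure{f^M(\widetilde I)}=\delta_1<1,$ writes $\measure{f^N(I)}$ and $\measure{f^M(\widetilde I)}$ as integrals of $|(f^N)'|$ and $|(f^M)'|,$ factors $(f^N)'=((f^{N-M})'\circ f^M)\cdot(f^M)'$ with $|(f^{N-M})'|\geq C_0^{N-M},$ and invokes quasi-constancy of $|(f^M)'|$ only on $\widetilde I,$ where the hypothesis holds. This ratio argument gives~\eqref{eq:IndStepGrowthCondition} cleanly without a pointwise derivative estimate.

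Second, the containment $J_\xi\subseteq dI(z).$ For $\xi$ near an endpoint of $dI(z)$ the lifted arc $J_\xi$ can protrude, and ``shrinking $\eta$'' only helps once you have already shown that $\measure{J_\xi}/\measure{dI(z)}$ goes to zero with $\eta$ uniformly --- which is exactly the derivative control discussed above, coordinated with $\delta_0$ --- and you also need to restrict $D$ to a slightly trimmed interior of $dI(z),$ checking that the measure you discard is harmless. None of this is inherently wrong, but it is nontrivial and it is where the paper's partition-and-trim wins: in their construction $\widetilde J_i\subseteq\mathrm{Int}(J_i)\subseteq dI(z)$ by fiat, so containment costs nothing. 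If you carry out the third paragraph in full, your argument converges to the paper's in all its essentials.
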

\begin{proof}
Let $\e = \e(f),$ $c_1 = c_1(f)$ and $d = d(f)$ be the constants given by Corollary~\ref{corl:LocalisedArturLemma}.
Consider the set
\begin{equation*}
 E \coloneq \left\{\xi\in dI(z)\colon
 \re{\sum_{k=M}^{N}a_kf^{k}(\xi)} \geq c_1\left(\sum_{k=M}^{N}|a_k|^2\right)^{1/2}\right\},
\end{equation*}
which by Corollary~\ref{corl:LocalisedArturLemma} has
$\measure{E} \geq c_1(1-|z|) \geq (c_1/d) \measure{dI(z)}.$

Observe that since $f(0)=0$, by Schwarz's Lemma we have
\begin{equation*}
 |f^{N}(z)| = |f^{N-M}(f^{M})(z)| \leq |f^{M}(z)| < \e.
\end{equation*}
Therefore, by part~\eqref{item:InteriorToBoundary} of Lemma~\ref{lemma:RelationInteriorBoundary}
we have $\measure{f^{N}(I(z))}, \measure{f^{M}(I(z))} \geq \delta_0$ for some $0 < \delta_0 = \delta_0(\e) < 1$
(and this is still true if we substitute $\delta_0$ by $0 < \delta < \delta_0$).
In the rest of the argument, we will assume that $\measure{dI(z)} < 1,$
since the case $\measure{dI(z)} = 1$ follows with minor modifications.
Let now $0 < \delta_1 = \delta_1(f) < \delta_0$ be a small enough constant
that will be determined later.
For brevity we set $J \coloneq dI(z).$

Now we split the arc $J$ into a collection of subarcs $J_1,\ldots,J_L$
for which we have a precise estimate of $\measure{f^N(J_i)},$ $i = 1,\ldots,L.$
Since we assume that $\measure{J} < 1,$ this arc has two endpoints.
Pick a subarc $J_1$ of $J$ having a common endpoint with $J$ and such that
\begin{equation*}
 \measure{f^{N}(J_1)} = \frac{\delta_1}{2}.
\end{equation*}
We explain the general inductive step.
Suppose that we have chosen $J_1,\ldots,J_{k-1}.$
If $$\measure{f^{N}\left(J\setminus(\bigcup_{1\leq i < k}J_i)\right)} < \delta_1/2,$$
set $L = k-1,$ substitute $J_{k-1}$ by $$J_{k-1} \cup J\setminus(\bigcup_{1\leq i < k}J_i)
= J\setminus(\bigcup_{1\leq i < k-1}J_i)$$ and this finishes the splitting.
Otherwise, pick the subarc $J_k$ of $J\setminus(\bigcup_{1\leq i < k}J_i)$
that has a common endpoint with $J_{k-1}$
(that is, $J_k$ and $J_{k-1}$ have disjoint interiors) and that satisfies
\begin{equation*}
 \measure{f^{N}(J_k)} = \frac{\delta_1}{2}.
\end{equation*}
After finitely many steps, we get a finite sequence of arcs $J_1,\ldots,J_L$
that have disjoint interiors, that cover $J$ and for which
\begin{equation*}
 \frac{\delta_1}{2} \leq \measure{f^N(J_i)} \leq \delta_1
\end{equation*}
for each $i = 1,\ldots,L.$
The reason why there are only finitely many of these arcs is that
they all have lengths bounded below by $\delta_1 (\max\{|f'(\xi)|\colon \xi \in \ddisk\})^{-N}/2.$

Now set
\begin{equation*}
 F \coloneq \{1 \leq i \leq L\colon \measure{E \cap J_i} > 0\}.
\end{equation*}
Clearly, $F \neq \emptyset.$
For each $i \in F,$ pick a nondegenerate closed subarc $\wt{J}_i$ of $J_i$ such that
\begin{equation*}
 \wt{J}_i \subseteq \mathrm{Int}(J_i),
\end{equation*}
that
\begin{equation}
 \label{eq:SubArcCoveringE}
 \measure{E\cap\wt{J}_i} \geq \frac{1}{2}\measure{E\cap J_i}
\end{equation}
and that
\begin{equation}
 \label{eq:SubArcLowerSize}
 \measure{f^{N}(\wt{J}_i)} \geq \frac{1}{2}\measure{f^{N}(J_i)}.
\end{equation}

Set
\begin{equation*}
 \cF \coloneq \{\wt{J}_i\colon i \in F\}
\end{equation*}
and let us check that this collection has the desired properties. Set $c:=c(f)=c_1/4$ (this choice is explained below). First, by \eqref{eq:SubArcCoveringE}, we have that
\begin{equation*}
 \sum_{I\in\cF}\measure{I} \geq \frac{1}{2}\sum_{i=1}^{L}\measure{E\cap J_i}
 =\frac{\measure{E}}{2} \geq \frac{c_1}{2}(1-|z|) \geq \frac{c}{2d}\measure{dI(z)},
\end{equation*}
which shows \eqref{eq:IndStepCoveringRequirement}.
Next, we show \eqref{eq:IndStepGrowthCondition}.
Let $I\in\cF$ be arbitrary and observe that
\begin{equation*}
 \measure{f^{M}(I)} \leq \measure{f^{N}(I)} \leq \delta_1
 < \delta_0 \leq \measure{f^{M}(J)}.
\end{equation*}
Thus, we can pick a subarc $\wt{I}$ of $J$ with $I\subseteq\wt{I}$ such that $\measure{f^{M}(\wt{I})} = \delta_1.$
Let $C = C(f) > 1$ be the constant given by Lemma~\ref{lemma:QuasiConstantDerivative}.
Then, we observe that
\begin{align*}
 1 &\geq \frac{\measure{f^{N}(I)}}{\measure{f^{M}(\wt{I})}}
 = \frac{\int_{I}|(f^{N})'(\xi)|\,\mathdm(\xi)}{\int_{\wt{I}}|(f^{M})'(\xi)|\,\mathdm (\xi)}\\
 &= \frac{\int_{I}|(f^{N-M})'(f^{M}(\xi))|\cdot|(f^{M})'(\xi)|\,\mathdm (\xi)}{\int_{\wt{I}}|(f^{M})'(\xi)|\,\mathdm(\xi)}\\
 &\geq C_0^{N-M} \frac{\int_{I}|(f^{M})'(\xi)|\,\mathdm (\xi)}{\int_{\wt{I}}|(f^{M})'(\xi)|\,\mathdm(\xi)}\\
 &\geq C_0^{N-M} \frac{\measure{I}\cdot(\min_{\xi\in \wt{I}}|(f^{M})'(\xi)|)}{\measure{\wt{I}}\cdot(\max_{\xi\in \wt{I}}|(f^{M})'(\xi)|)}\\
 &\geq \frac{C_0^{N-M}}{C} \frac{\measure{I}}{\measure{\wt{I}}}
 \geq \frac{C_0^{N-M}}{C} \frac{\measure{I}}{\measure{J}},
\end{align*}
which proves \eqref{eq:IndStepGrowthCondition}.
Now, since for all $I \in \cF$ we have $I \cap E \neq \emptyset,$
by Corollary~\ref{corl:OscillationControl} we can pick $\delta_1 = \delta_1(f) > 0$
small enough so that
\begin{equation*}
 \re{\sum_{k=M}^{N}a_kf^{k}(\xi)} \geq
 \frac{c_1}{4} \left(\sum_{k=M}^{N}|a_k|^2\right)^{1/2}
\end{equation*}
for all $\xi \in I.$
Thus, with our choice $c = c_1/4$ property \eqref{eq:IndStepRealPartBound} holds.
Finally, because of \eqref{eq:SubArcLowerSize},
if we set $\eta = \eta(f) = \delta_1/4,$ we also get condition \eqref{eq:IndStepSizeControl}.
\end{proof}

Let us stress an important special case of Lemma~\ref{lemma:HausdorffMeasure}.
Assume that $\varphi(t) = t^{1-\delta}$ for some $0 < \delta < 1.$
Then condition~\eqref{item:ArcSizeBound} in that lemma becomes
\begin{equation*}
 \frac{\measure{I}}{\measure{J}} \leq \min\left\{\frac{\tilde{c}}{2},\tilde{c}^{1/\delta}\right\},
\end{equation*}
where $\tilde{c}$ is the constant appearing in conditions~\eqref{item:ArcCoveringRequirement}
and~\eqref{item:ArcSizeBound}.
In the application, $\tilde{c}$ will be equal to the constant $c/(2d)$ from
condition~\eqref{eq:IndStepCoveringRequirement} from~Lemma~\ref{lemma:AlternativeInductiveStep},
so that condition~\eqref{item:ArcCoveringRequirement} in Lemma~\ref{lemma:HausdorffMeasure}
is automatically granted.
Thus, in order to use condition~\eqref{eq:IndStepGrowthCondition},
we will pick sequences $\{(\overline{M}_k,\overline{N}_k)\}$ with
\begin{equation*}
 K C_0^{-(\overline{N}_k-\overline{M}_k)} \leq \min\left\{\frac{c}{4d},\left(\frac{c}{2d}\right)^{1/\delta}\right\}
\end{equation*}
to allow us to apply Lemma~\ref{lemma:HausdorffMeasure}.
In other words, for measure functions of this form
we can take the differences $\overline{N}_k-\overline{M}_k$ to be constant, once one fixes $\delta.$
In fact, this difference depends linearly on $1/\delta.$
For a general gauge function $\varphi$ we will define the sequences $\{(\overline{M}_k,\overline{N}_k)\}$
to assure that
\begin{equation}
\label{eq:IndexDifferenceEstimate}
 K C_0^{-(\overline{N}_k-\overline{M}_k)} \leq \min\left\{\frac{\tilde{c}}{2},\frac{\psi^{-1}(\tilde{c}^{-(k+1)})}{\psi^{-1}(\tilde{c}^{-k})}\right\}
\end{equation}
for all $k,$ for the same $\tilde{c} = \frac{c}{2d}$
and where $\psi(t) = \varphi(t)/t.$
This enforces the concrete dependence of the differences $\overline{N}_k-\overline{M}_k$ on $\varphi$ and $f.$
In particular, observe that the differences $\overline{N}_k-\overline{M}_k$
(and more generally the sequences $\{\overline{N}_k\}$ and $\{\overline{M}_k\}$ themselves)
do not depend in any way on the sequence $\{a_n\}.$

The definition of the sequences $\{(\overline{M}_k,\overline{N}_k)\}$ is as follows.
Fix positive integers $Q$ and $N$.
Consider a fixed finite Blaschke product $f$ fixing the origin and which is not a rotation.
Recall that $C_0 = \min\{|f'(\xi)|\colon \xi \in \ddisk\} > 1.$
Also consider a fixed measure function $\varphi$ and denote $\psi(t) = \varphi(t)/t;$
we assume that $\psi$ is decreasing and satisfies $\lim_{t\to 0^+} \psi(t) = \infty.$
In addition, consider the constants $0 < c = c(f) < 1,$ $1 < K = K(f) < \infty$
and the positive integer $d = d(f)$ given by Lemma~\ref{lemma:AlternativeInductiveStep},
and define $C = c/(2d).$
Define first the sequence $\{\e_k\} = \{\e_k\}(\varphi,f)$ by
\begin{equation}
 \label{eq:EpsilonKs}
 \e_k = \frac{1}{4} \min\left\{C,\frac{\psi^{-1}(C^{-(k+1)})}{\psi^{-1}(C^{-k})}\right\},
\end{equation}
for each positive integer $k$
(cf. condition~\eqref{item:ArcSizeBound} in Lemma~\ref{lemma:HausdorffMeasure}).
Next, define the sequence $\{G_k\} = \{G_k\}(\varphi,f,Q)$ by taking $G_k$ to be
the first positive integer such that
\begin{equation*}
 K C_0^{-G_k} C_0^Q \leq \e_k
\end{equation*}
for each $k \geq 1$ (cf. estimate~\eqref{eq:IndexDifferenceEstimate}).
Finally, to construct the sequences $$\{(\overline{M}_k,\overline{N}_k)\} =
\{(\overline{M}_k,\overline{N}_k)\}(\varphi,f,Q,N),$$ take $\overline{M}_1 = 1$ and define
\begin{equation}
 \label{eq:SequencesGeneralCase}
 \overline{N}_k = \overline{M}_k + G_k, \qquad
 \overline{M}_{k+1} = \overline{N}_k + NQ
\end{equation}
for $k \geq 1.$

Even though the conditions of Theorem~\ref{thm:GeneralPositiveExample}
are stated in terms of $\{(\overline{M}_k,\overline{N}_k)\},$
the application of Lemma~\ref{lemma:AlternativeInductiveStep} will use
the following auxiliary sequences. As before, consider $\varphi,$ $f,$ $Q$ and $N$ fixed
and let $\{a_n\}$ be a nonsummable sequence of complex numbers tending to zero.
For each positive integer $k,$ pick the integer $1 \leq t_k \leq N$ such that the sum
\begin{equation*}
 \sum_{n=\overline{N}_k+(t_k-1)Q}^{\overline{N}_k+t_kQ-1} |a_n|
\end{equation*}
is the minimum.
In particular, observe that once $t_k$ is determined, we have that
\begin{equation}
 \label{eq:ShortBlockSmall}
 \sum_{n=\overline{N}_k+(t_k-1)Q}^{\overline{N}_k+t_kQ-1} |a_n|
 \leq \frac{1}{N} \sum_{n=\overline{N}_k}^{\overline{M}_{k+1}-1} |a_n|.
\end{equation}
We now define the sequences $\{(M_k,N_k)\} = \{(M_k,N_k)\}(\varphi,f,Q,N,\{a_n\})$ by setting
$M_1 = \overline{M}_1 = 1$ and
\begin{equation}
 \label{eq:ConstructionSequences}
 N_k = \overline{N}_k + (t_k-1)Q, \qquad
 M_{k+1} = \overline{N}_k + t_kQ
\end{equation}
for each positive integer $k.$ In particular $M_{k+1}-N_k=Q,$ for all $k\geq1.$

Note that, for this new sequence, we have
\begin{equation*}
N_{k+1}\leq\overline{N}_{k+1}+(N-1)Q,\quad N_{k}\geq \overline{N}_k=\overline{M}_{k+1}-NQ,
\end{equation*}
for all $k\geq1.$ Assuming thus for a moment that in addition condition~\eqref{eq:LongBlockTendingToZero},
which is
\begin{equation*}
\lim_{k\rightarrow\infty}\sum_{n=\overline{M}_k}^{\overline{N}_{k}-1}|a_n|=0,
\end{equation*}
holds, then since $\lim_{n\rightarrow\infty}a_n=0$ and $N,Q$ are fixed numbers we immediately conclude
\begin{equation*}
 \lim_{k\to\infty}\sum_{n=N_k}^{N_{k+1}}|a_n| = 0.
\end{equation*}
In addition, estimate~\eqref{eq:ShortBlockSmall} becomes
\begin{equation*}
 \sum_{n=N_k}^{M_{k+1}-1} |a_n|
 \leq \frac{1}{N} \sum_{n=\overline{N}_k}^{\overline{M}_{k+1}-1} |a_n|,
\end{equation*}
implying that
\begin{align*}
S_k &\coloneq \sum_{n=N_k+1}^{M_{k+1}-1}|a_n| \leq\sum_{n=N_k}^{M_{k+1}-1}|a_n|\leq
\frac{1}{N}\sum_{n=\overline{N}_k}^{\overline{M}_{k+1}-1}|a_n|\\
&=\frac{1}{N}\left(S_{k}+\sum_{n=\overline{N}_k}^{N_k}|a_n|+\sum_{n=M_{k+1}}^{\overline{M}_{k+1}-1}|a_n|\right)\\
&\leq\frac{1}{N}\left(S_{k}+\sum_{n=M_k}^{N_k}|a_n|+\sum_{n=M_{k+1}}^{N_{k+1}}|a_n|\right),
\end{align*}
therefore
\begin{equation*}
 \sum_{n=M_{k}}^{N_{k}} |a_n| + \sum_{n=M_{k+1}}^{N_{k+1}} |a_n|
 \geq (N-1) \sum_{n=N_{k}}^{M_{k+1}-1} |a_n|.
\end{equation*}
It follows that for all positive integers $k_1 < k_2$ there holds
\begin{equation}
 \label{eq:FirstBoundWithLongBlocks}
 \sum_{k=k_1}^{k_2} \sum_{n=M_k}^{N_k} |a_n|
 \geq \frac{N-1}{2} \sum_{k=k_1}^{k_2-1} \sum_{n=N_k+1}^{M_{k+1}-1} |a_n|,
\end{equation}
and therefore
\begin{equation}
 \label{eq:SecondBoundWithLongBlocks}
 \sum_{n=M_{k_1}}^{N_{k_2}} |a_n|
 \leq \left(1+\frac{2}{N-1}\right) \sum_{k=k_1}^{k_2}\sum_{n=M_k}^{N_k} |a_n|.
\end{equation}
Lastly, by the Cauchy-Schwarz inequality, if condition~\eqref{eq:LongBlockReverseCauchy}, which is
\begin{equation*}
\left(\sum_{n=\overline{M}_k}^{\overline{N}_k-1}|a_n|^2\right)^{1/2}\geq\beta\sum_{n=\overline{M}_k}^{\overline{N}_k-1}|a_n|,\quad k=1,2,\ldots
\end{equation*}
happens to be satisfied for some constant $0<\beta<1$, then we obtain that
\begin{align*}
&\left(\sum_{n=M_k}^{N_k}|a_n|^2\right)^{1/2}=\left(\sum_{n=M_k}^{\overline{M}_k-1}|a_n|^2+\sum_{n=\overline{M}_k}^{\overline{N}_k-1}|a_n|^2+\sum_{n=\overline{N}_k}^{N_k}|a_n|^2\right)^{1/2}\\
&\geq\frac{1}{\sqrt{3}}\left[\left(\sum_{n=M_k}^{\overline{M}_k-1}|a_n|^2\right)^{1/2}+\left(\sum_{n=\overline{M}_k}^{\overline{N}_k-1}|a_n|^2\right)^{1/2}+\left(\sum_{n=\overline{N}_k}^{N_k}|a_n|^2\right)^{1/2}\right]\\
&\geq\frac{1}{\sqrt{3}}\left[\frac{1}{\sqrt{NQ+1}}\sum_{n=M_k}^{\overline{M}_k-1}|a_n|+\beta\sum_{n=\overline{M}_k}^{\overline{N}_k-1}|a_n|+\frac{1}{\sqrt{NQ+1}}\sum_{n=\overline{N}_k}^{N_k}|a_n|\right],
\end{align*}
and therefore
\begin{equation}
 \label{eq:CoefficientsReverseCauchy}
 \left(\sum_{n=M_k}^{N_k} |a_n|^2\right)^{1/2}
 \geq \beta_1 \sum_{n=M_k}^{N_k} |a_n|
\end{equation}
for all $k \geq 1$ by taking
\begin{equation}
 \label{eq:BetaDefinition}
 \beta_1 = \frac{1}{2\sqrt{3}} \min\left\{\beta, \frac{1}{\sqrt{NQ+1}}\right\}.
\end{equation}

The following proposition deals with the general construction of our Cantor like sets.
Here, we will need to use the pseudohyperbolic distance in the unit disk.
Recall that, given two points $z,w \in \disk,$ we define their pseudohyperbolic distance
$\rho(z,w)$ by the expression
\begin{equation*}
    \rho(z,w) = \left|\frac{z-w}{1-\overline{w}z}\right|.
\end{equation*}
We refer the reader to~\cite[Section~I.1]{ref:GarnettBoundedAnalyticFunctions}
for the basic facts and properties of the pseudohyperbolic distance.

\begin{proposition}
 \label{prop:LongBlocksConstruction}
 Consider a measure function $\varphi$ with $\psi(t) = \varphi(t)/t$ decreasing and
 with $\lim_{t\to 0^+} \psi(t) = \infty.$
 Consider also a finite Blaschke product $f$ with $f(0) = 0$ which is not a rotation.
 Let $\{a_n\}$ be a nonsummable sequence of complex numbers tending to zero.
 Also let $\{\e_k\} = \{\e_k\}(\varphi,f)$ be defined by~\eqref{eq:EpsilonKs}.

 Then, there exist $0 < \varepsilon = \varepsilon(f) < 1,$
 $0 < C = C(f) < 1$ and positive integers $d = d(f)$ and $Q = Q(f)$ for which the following holds.
 Suppose that there exist $0 < \beta < 1$ and a positive integer $N$ for which
 conditions~\eqref{eq:FirstInequalityN}--\eqref{eq:LongBlockReverseCauchy} hold
 (with $C$ taking the role of $c$ in these conditions)
 and consider the sequences $\{(M_k,N_k)\} = \{(M_k,N_k)\}(\varphi,f,Q,N,\{a_n\})$
 defined by~\eqref{eq:ConstructionSequences}.
 Then, for any positive integers $k_1 \leq k_2$ and $z \in \disk$ such that
 $|f^{M_{k_1}}(z)| < \varepsilon,$ there exists a sequence $\{\cF_k\}_{k=k_1}^{k_2}$
 of families of arcs with the following properties:
 \begin{enumerate}[(i)]
  \item
  \label{item:FirstConditionForFamilies}
  For each $k=k_1,\ldots,k_2,$ $\cF_k$ is a nonempty finite family of pairwise disjoint closed subarcs of $dI(z).$
  \item
  For each $k=k_1,\ldots,k_2-1$ and for each $I\in\cF_{k+1}$ there exists $J\in\cF_{k}$ such that $I\subseteq J.$
  \item
  \label{item:ThirdConditionForFamilies}
  For each $k=k_1,\ldots,k_2-1$ and for each $J\in\cF_{k}$ there exists $I\in\cF_{k+1}$ such that $I\subseteq J.$
  \item
  For each $k=k_1,\ldots,k_2-1$ there holds
  \begin{equation}
   \label{eq:LongBlocksSizeControl}
   \max_{I \in \cF_{k+1}}
   \left\{\frac{\measure{I}}{\measure{I'}}\right\} \leq \e_k,
  \end{equation}
  where $I'$ is the unique arc in $\cF_k$ with $I \subseteq I'.$
  \item
  For each $k=k_1,\ldots,k_2-1$ and for each $J\in\cF_{k}$ there holds
  \begin{equation}
   \label{eq:LongBlocksCovering}
   \sum_{\substack{I\in\cF_{k+1}\\I\subseteq J}} \measure{I} \geq C \cdot \measure{J}.
  \end{equation}
  \item
  There holds
  \begin{equation}
   \label{eq:LongBlocksRealPart}
   \re{\sum_{n=M_{k_1}}^{N_{k_2}}a_nf^n(\xi)} \geq R \sum_{n={M_{k_1}}}^{N_{k_2}}|a_n|,\qquad \text{ for every } \xi\in\bigcup_{I\in\cF_{k_2}}I,
  \end{equation}
  where $R = R(f,\beta,N,Q) > 0.$
 \end{enumerate}
\end{proposition}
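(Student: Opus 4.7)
The plan is to construct the families $\{\cF_k\}_{k=k_1}^{k_2}$ by induction on $k$, using Lemma~\ref{lemma:AlternativeInductiveStep} as the basic block at each step. At the base step $k=k_1$ we apply that lemma directly at $z$ (where the hypothesis $|f^{M_{k_1}}(z)|<\varepsilon$ is available) to the long block $\{a_n\}_{n=M_{k_1}}^{N_{k_1}}$, thus producing $\cF_{k_1}$. At the inductive step, for each $J\in\cF_k$ we select an auxiliary point $\tilde z_J\in\disk$ whose direction is the midpoint of $J$ and whose modulus is $1-\measure{J}/d$, so that $dI(\tilde z_J)=J$; we then apply Lemma~\ref{lemma:AlternativeInductiveStep} at $\tilde z_J$ to the block $\{a_n\}_{n=M_{k+1}}^{N_{k+1}}$, and define $\cF_{k+1}$ as the union over $J\in\cF_k$ of the resulting families, all of whose arcs are contained in $J$.

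The crucial prerequisite at each inductive step is the bound $|f^{M_{k+1}}(\tilde z_J)|<\varepsilon$, and this is precisely the role of the integer $Q=Q(f)$. Property~\eqref{eq:IndStepSizeControl} from the previous step gives $\eta\leq\measure{f^{N_k}(J)}\leq 4\eta$; Lemma~\ref{lemma:QuasiConstantDerivative} then shows that the shrunken arc $I(\tilde z_J)\subseteq J$ has $\measure{f^{N_k}(I(\tilde z_J))}$ in a comparable range, and Lemma~\ref{lemma:RelationInteriorBoundary}\eqref{item:BoundaryToInterior} delivers $|f^{N_k}(\tilde z_J)|\leq\gamma$ for some $\gamma=\gamma(f)<1$ uniform in $k$ and $J$. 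Since $f(0)=0$ and $f$ is not a rotation, $f^n\to 0$ uniformly on $\{|w|\leq\gamma\}$, so we fix $Q=Q(f)$ with $|f^Q(w)|<\varepsilon$ on that set; as $M_{k+1}-N_k=Q$, this yields the desired estimate. Properties~\eqref{item:FirstConditionForFamilies}--\eqref{item:ThirdConditionForFamilies} are then immediate, \eqref{eq:LongBlocksCovering} follows from~\eqref{eq:IndStepCoveringRequirement} with $C=c/(2d)$, and \eqref{eq:LongBlocksSizeControl} follows from~\eqref{eq:IndStepGrowthCondition} together with the definition~\eqref{eq:EpsilonKs} of $\e_k$ and the fact that $G_k$ was chosen so that $KC_0^{-G_k}C_0^Q\leq\e_k$, which precisely absorbs the $Q$-shift present in the relation $N_{k+1}-M_{k+1}\geq G_{k+1}$.

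The main difficulty is the real part estimate~\eqref{eq:LongBlocksRealPart}. For $\xi$ lying in some $I\in\cF_{k_2}$, fix the nesting chain $I\subseteq J_{k_2-1}\subseteq\cdots\subseteq J_{k_1}$ with $J_k\in\cF_k$. For each $k$, property~\eqref{eq:IndStepRealPartBound} gives a lower bound $c\bigl(\sum_{n=M_k}^{N_k}|a_n|^2\bigr)^{1/2}$ for the real part of $\sum_{n=M_k}^{N_k}a_nf^n$ on the freshly constructed arcs inside $J_{k-1}$; since $\measure{f^{N_k}(J_k)}\leq 4\eta$, Corollary~\ref{corl:OscillationControl} transfers this bound to our chosen $\xi$ with an additive error that is a fraction of $c$ and can be absorbed by taking $\eta$ small. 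Applying the reverse Cauchy--Schwarz inequality~\eqref{eq:CoefficientsReverseCauchy} with the constant $\beta_1$ from~\eqref{eq:BetaDefinition} converts this into a lower bound proportional to $\sum_{n=M_k}^{N_k}|a_n|$. Summing over $k=k_1,\ldots,k_2$ and invoking~\eqref{eq:SecondBoundWithLongBlocks} controls the long-block contribution from below by a fixed multiple of $\sum_{n=M_{k_1}}^{N_{k_2}}|a_n|$, while the short-block contribution is dominated in absolute value by $\frac{2}{N-1}\sum_{n=M_{k_1}}^{N_{k_2}}|a_n|$ through~\eqref{eq:FirstBoundWithLongBlocks}. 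Collecting these estimates gives a constant of the form $R=C\beta_1\bigl(1+\tfrac{2}{N-1}\bigr)^{-1}-\tfrac{2}{N-1}$; the two cases in the definition of $\beta_1$ correspond exactly to conditions~\eqref{eq:FirstInequalityN} and~\eqref{eq:SecondInequalityN}, which together guarantee $R>0$.
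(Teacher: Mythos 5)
Your plan matches the paper's: iterate Lemma~\ref{lemma:AlternativeInductiveStep}, use the Denjoy--Wolff theorem to fix $Q$, and aggregate the real-part estimates via~\eqref{eq:CoefficientsReverseCauchy}--\eqref{eq:SecondBoundWithLongBlocks}. A few local points are worth tidying, none of which breaks the argument. First, passing from $\eta \leq \measure{f^{N_k}(J)} \leq 4\eta$ to $|f^{N_k}(\tilde z_J)| \leq \gamma$ by applying Lemma~\ref{lemma:QuasiConstantDerivative} and then Lemma~\ref{lemma:RelationInteriorBoundary}\eqref{item:BoundaryToInterior} requires the latter to cover a range of ratio $4C^2$, not $4$; the paper instead applies Lemma~\ref{lemma:RelationInteriorBoundary}\eqref{item:BoundaryToInterior} directly to $J$ to conclude $|f^{N_k}(z(J))| \leq \gamma$, and then passes to $\tilde z_J$ using $\rho(z(J),\tilde z_J)\leq (d-1)/d$ together with the Schwarz--Pick lemma. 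Second, the appeal to Corollary~\ref{corl:OscillationControl} to transfer~\eqref{eq:IndStepRealPartBound} to $\xi$, with the error absorbed by taking $\eta$ small, cannot be carried out as phrased, since $\eta$ is fixed once and for all by Lemma~\ref{lemma:AlternativeInductiveStep}; in fact no transfer is needed, because $\xi$ lies in the nested chain $I\subseteq J_{k_2-1}\subseteq\cdots\subseteq J_l$ and each $J_l$ is one of the freshly produced arcs at level $l$, on all of which~\eqref{eq:IndStepRealPartBound} already holds. Third, the real-part estimate in Lemma~\ref{lemma:AlternativeInductiveStep} carries the constant $c$, not $C=c/(2d)$, so the natural final constant is $R=c\beta_1\bigl(1+\tfrac{2}{N-1}\bigr)^{-1}-\tfrac{2}{N-1}$; your value with $C$ in place of $c$ is still a valid (smaller) lower bound and remains positive under conditions~\eqref{eq:FirstInequalityN} and~\eqref{eq:SecondInequalityN} with $C$ in the role of $c$, but the two constants should not be conflated.
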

\begin{proof}
 Let $\varepsilon = \varepsilon(f)$ be the constant given by Lemma~\ref{lemma:AlternativeInductiveStep}.
 Let also $d = d(f)$ be the positive integer given by the same lemma
 and take $C = C(f) = c/(2d),$ where $0 < c = c(f) < 1$ is also given
 by Lemma~\ref{lemma:AlternativeInductiveStep}.
 First, we determine the value of $Q = Q(f).$
 Since we will have to apply Lemma~\ref{lemma:AlternativeInductiveStep} iteratively,
 consider the constant $0 < \eta = \eta(f) < 1/4$ given in that result.
 By part~\eqref{item:BoundaryToInterior} of Lemma~\ref{lemma:RelationInteriorBoundary},
 there exists a constant $0 < \gamma = \gamma(f,\eta) < 1$ such that if $N$ is any
 positive integer and $I \subseteq \ddisk$ is an arc with
 $\eta < \measure{f^N(I)} < 4\eta,$ then $|f^N(z(I))| \leq \gamma.$
 Later on, we will need to consider points $z^\ast$ that are, at most,
 at a fixed hyperbolic distance (depending on $d$) of $z(I).$
 That is, we will consider $z^\ast$ with $\rho(z(I),z^\ast) < \rho_0 = \rho_0(d) < 1.$
 Therefore, by Schwarz Lemma there exists $0 < \gamma_1 = \gamma_1(\gamma) < 1$ such that
 $|f^N(z^\ast)| \leq \gamma_1$ for any $z^\ast$ with $\rho(z(I),z^\ast) < \rho_0.$
Finally, since by the Denjoy--Wolff Theorem $f^n$ tend to zero uniformly on compact sets,
 we can choose a positive integer $Q = Q(f,\gamma_1) = Q(f)$ large enough so that
 $|f^Q(z)| < \varepsilon$ whenever $|z| \leq \gamma_1.$

 We now assume that $\beta,$ $N$ and $z \in \disk$ have been chosen satisfying
 the required conditions and show how to perform the inductive construction.
 For the first step, we have by assumption $|f^{M_{k_1}}(z)| < \e.$
 Now we use Lemma~\ref{lemma:AlternativeInductiveStep} with the integers $M_{k_1} < N_{k_1}$
 to pick a nonempty finite family $\cF_{k_1}$ of pairwise disjoint closed subarcs of $dI(z)$ such that
 \begin{equation*}
  \re{\sum_{m=M_{k_1}}^{N_{k_1}} a_mf^{m}(\xi)}
  \geq c \left(\sum_{n=M_{k_1}}^{N_{k_1}} |a_n|^2\right)^{1/2},\qquad
  \text{ for every } \xi\in\bigcup_{I\in\cF_{k_1}} I,
 \end{equation*}
 \begin{equation*}
  \sum_{I\in\cF_{k_1}} \measure{I} \geq \frac{c}{2d}\measure{dI(z)},
 \end{equation*}
 \begin{equation*}
  \eta \leq \measure{f^{N_{k_1}}(I)} \leq 4\eta,\qquad
  \text{ for each } I\in\cF_{k_1},
 \end{equation*}
 and
 \begin{equation*}
  \measure{I} \leq K C_0^{-(N_{k_1}-M_{k_1})} \measure{dI(z)},\qquad
  \text{ for each } I\in\cF_{k_1},
 \end{equation*}
 where $1 < K = K(f) < \infty$ is the constant given by Lemma~\ref{lemma:AlternativeInductiveStep}
 and $C_0 = \min\{|f'(\xi)|\colon \xi \in \ddisk\} > 1.$
 If $k_1 = k_2,$ we have finished, so we assume that $k_2 > k_1.$
 Suppose that we have already defined $\cF_{k}$ up to some $k < k_2$
 and consider $J \in \cF_k.$
 Take $z^\ast(J) \in \disk$ such that $dI(z^\ast(J)) = I(z(J)).$
 This can be done by picking $z^\ast(J)$ on the same radius as $z(J)$
 with $d(1-|z^\ast(J)|) = 1-|z(J)|.$
 In particular, we have that
 \begin{equation*}
  \rho(z^\ast(J),z(J)) \leq \frac{d-1}{d} < 1.
 \end{equation*}
 Hence, by~\eqref{eq:IndStepSizeControl} and part (\ref{item:BoundaryToInterior}) of Lemma~\ref{lemma:RelationInteriorBoundary},
 we have that $|f^{N_k}(z(J))| \leq \gamma$ and, by the previous argument,
 also $|f^{N_k}(z^\ast(J))| \leq \gamma_1.$
 By the choice of $Q$ and because $M_{k+1}-N_k = Q,$ it follows that
 $|f^{M_{k+1}}(z)| < \varepsilon.$
 Therefore, we can apply Lemma~\ref{lemma:AlternativeInductiveStep}
 with $z^\ast(J)$ and the integers $M_{k+1} < N_{k+1}$ to obtain
 a nonempty finite family $\cF_{k+1}(J)$ of pairwise disjoint closed subarcs of $J$
 such that
 \begin{equation*}
  \re{\sum_{n=M_{k+1}}^{N_{k+1}} a_nf^{n}(\xi)} \geq
  c \left(\sum_{n=M_{k+1}}^{N_{k+1}} |a_n|^2\right)^{1/2},\qquad
  \text{ for all } \xi\in\bigcup_{I\in\cF_{k+1}(J)} I,
 \end{equation*}
 \begin{equation*}
  \sum_{I\in\cF_{k+1}(J)} \measure{I} \geq \frac{c}{2d} \measure{J},
 \end{equation*}
 \begin{equation*}
  \eta \leq \measure{f^{N_{k+1}}(I)} \leq 4\eta,\qquad
  \text{ for each } I\in\cF_{k+1}(J),
 \end{equation*}
 and
 \begin{equation}
  \label{eq:GenerationSizeControl}
  \measure{I} \leq K C_0^{-(N_{k+1}-M_{k+1})} \measure{J},\qquad
  \text{ for each } I\in\cF_{k+1}(J).
 \end{equation}
 Finally, we set
 \begin{equation*}
  \cF_{k+1} \coloneq \bigcup_{J\in\cF_k}\cF_{k+1}(J).
 \end{equation*}

 Conditions \eqref{item:FirstConditionForFamilies}--\eqref{item:ThirdConditionForFamilies}
 are immediately satisfied due to Lemma~\ref{lemma:AlternativeInductiveStep}.
 Condition~\eqref{eq:LongBlocksSizeControl} is easily verified
 using~\eqref{eq:GenerationSizeControl}.
 Indeed, for any $k_1 \leq k < k_2$ and any $I \in \cF_{k+1},$
 if $I'$ is the unique arc in $\cF_k$ such that $I \subseteq I',$ we have that
 \begin{equation*}
  \frac{\measure{I}}{\measure{I'}} \leq K C_0^{-(N_{k+1}-M_{k+1})} \leq
  K C_0^{-G_k} \leq \e_k.
 \end{equation*}
 Condition~\eqref{eq:LongBlocksCovering} also follows immediately from
 Lemma~\ref{lemma:AlternativeInductiveStep}.
 
 We are only left with checking~\eqref{eq:LongBlocksRealPart}.
 It is clear that for any $k_1 \leq l \leq k_2$ we have that
 \begin{equation*}
  \re{\sum_{n=M_{l}}^{N_{l}} a_nf^{n}(\xi)} \geq
  c \left(\sum_{n=M_{l}}^{N_{l}} |a_n|^2\right)^{1/2},\qquad
  \text{ for every } \xi\in\bigcup_{I\in\cF_{k_2}}I.
 \end{equation*}
 It follows by~\eqref{eq:CoefficientsReverseCauchy} that
 \begin{equation*}
  \re{\sum_{n=M_{l}}^{N_{l}} a_nf^{n}(\xi)} \geq
  c \beta_1 \sum_{n=M_{l}}^{N_{l}} |a_n|,\qquad
  \text{ for every } \xi\in\bigcup_{I\in\cF_{k_2}} I,
 \end{equation*}
 where $\beta_1$ is defined by~\eqref{eq:BetaDefinition}.
 Thus
 \begin{equation}
\label{eq:SummedCoefficientsReverseCauchy}
  \re{\sum_{k=k_1}^{k_2} \sum_{n=M_{k}}^{N_{k}} a_nf^{n}(\xi)} \geq
  c \beta_1 \sum_{k=k_1}^{k_2} \sum_{n=M_{k}}^{N_{k}} |a_n|,
 \end{equation}
for every $\xi\in\bigcup_{I\in\cF_{k_2}}I.$
 Hence, using~\eqref{eq:FirstBoundWithLongBlocks},~\eqref{eq:SecondBoundWithLongBlocks} as well as~\eqref{eq:SummedCoefficientsReverseCauchy},
 it follows that for all $\xi\in\bigcup_{I\in\cF_{k_2}}I$ we have
 \begin{align*}
  \re{\sum_{n=M_{k_1}}^{N_{k_2}} a_nf^{n}(\xi)} &=
  \re{\sum_{k=k_1}^{k_2} \sum_{n=M_{k}}^{N_{k}} a_nf^{n}(\xi)}
  + \re{\sum_{k=k_1}^{k_2-1} \sum_{n=N_{k}+1}^{M_{k+1}-1} a_nf^{n}(\xi)}\\
  &\geq c \beta_1 \sum_{k=k_1}^{k_2} \sum_{n=M_{k}}^{N_{k}} |a_n|
  - \sum_{k=k_1}^{k_2-1} \sum_{n=N_{k}+1}^{M_{k+1}-1} |a_n|\\
  &\geq R \sum_{n=M_{k_1}}^{N_{k_2}}|a_n|
 \end{align*}
 by choosing for instance
 \begin{equation*}
  R = c \beta_1 \left(1+\frac{2}{N-1}\right)^{-1} - \frac{2}{N-1},
 \end{equation*}
 which is positive by conditions~\eqref{eq:FirstInequalityN} and~\eqref{eq:SecondInequalityN}.
\end{proof}

\begin{corollary}
 \label{corl:BuildingBlock}
 Consider a measure function $\varphi$ with $\psi(t) = \varphi(t)/t$ decreasing and
 with $\lim_{t\to 0^+} \psi(t) = \infty.$
 Consider also a finite Blaschke product $f$ with $f(0) = 0$ which is not a rotation.
 Let $\{a_n\}$ be a nonsummable sequence of complex numbers tending to zero.
 Also let $\{\e_k\} = \{\e_k\}(\varphi,f)$ be defined by~\eqref{eq:EpsilonKs}.

 Then, there exist $0 < \varepsilon = \varepsilon(f) < 1,$
 $0 < C = C(f) < 1$ and positive integers $d = d(f)$ and $Q = Q(f)$ for which the following holds.
 Suppose that there exist $0 < \beta < 1$ and a positive integer $N$ for which
 conditions~\eqref{eq:FirstInequalityN}--\eqref{eq:LongBlockReverseCauchy} hold
 (with $C$ taking the role of $c$ in these conditions)
 and consider the sequences $\{(M_k,N_k)\} = \{(M_k,N_k)\}(\varphi,f,Q,N,\{a_n\})$
 defined by~\eqref{eq:ConstructionSequences}.
 Then, there exists $r = r(f,\beta,Q,N) > 0$ such that,
 for any positive integers $k_1 \leq k_2,$
 any $z \in \disk$ such that $|f^{M_{k_1}}(z)| < \varepsilon$
 and any $w \in \complex$ such that
 \begin{equation*}
  \sum_{n=M_{k_1}}^{N_{k_2}} |a_n| \leq r|w|,
 \end{equation*}
 there exists a sequence $\{\cF_k\}_{k=k_1}^{k_2}$
 of families of arcs with the following properties:
 \begin{enumerate}[(i)]
  \item
  For each $k=k_1,\ldots,k_2,$ $\cF_k$ is a nonempty finite family of pairwise disjoint closed subarcs of $dI(z).$
  \item
  For each $k=k_1,\ldots,k_2-1$ and for each $I\in\cF_{k+1}$ there exists $J\in\cF_{k}$ such that $I\subseteq J.$
  \item
  For each $k=k_1,\ldots,k_2-1$ and for each $J\in\cF_{k}$ there exists $I\in\cF_{k+1}$ such that $I\subseteq J.$
  \item
  For each $k=k_1,\ldots,k_2-1$ there holds
  \begin{equation*}
   \max_{I \in \cF_{k+1}}
   \left\{\frac{\measure{I}}{\measure{I'}}\right\} \leq \e_k,
  \end{equation*}
  where $I'$ is the unique arc in $\cF_k$ with $I \subseteq I'.$
  \item
  For each $k=k_1,\ldots,k_2-1$ and for each $J\in\cF_{k}$ there holds
  \begin{equation*}
   \sum_{\substack{I\in\cF_{k+1}\\I\subseteq J}} \measure{I} \geq C \cdot \measure{J}.
  \end{equation*}
  \item
  There holds
  \begin{equation*}
   \left|w - \sum_{n=M_{k_1}}^{N_{k_2}} a_nf^n(\xi)\right| \leq
   |w| - r \sum_{n={M_{k_1}}}^{N_{k_2}} |a_n|,\qquad
   \text{ for every } \xi\in\bigcup_{I\in\cF_{k_2}}I.
  \end{equation*}
 \end{enumerate}
\end{corollary}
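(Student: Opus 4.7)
The plan is to reduce to Proposition~\ref{prop:LongBlocksConstruction} by rotating the coefficients. Write the target value as $w = |w|e^{i\theta}$. Observe that the sequence $\{a_n e^{-i\theta}\}$ has exactly the same moduli as $\{a_n\}$, so it also tends to zero, is not absolutely summable, and satisfies the conditions \eqref{eq:LongBlockTendingToZero}--\eqref{eq:LongBlockReverseCauchy} with the same $\beta$. Hence Proposition~\ref{prop:LongBlocksConstruction} applied to $\{a_n e^{-i\theta}\}$ produces, with the same $\varepsilon$, $C$, $d$, $Q$, a sequence of families $\{\cF_k\}_{k=k_1}^{k_2}$ for which all the structural conclusions (i)--(v) of Corollary~\ref{corl:BuildingBlock} hold verbatim, together with the real-part estimate
\begin{equation*}
 \re{e^{-i\theta}\sum_{n=M_{k_1}}^{N_{k_2}}a_nf^n(\xi)} \geq R \sum_{n=M_{k_1}}^{N_{k_2}}|a_n|,
 \qquad \xi\in\bigcup_{I\in\cF_{k_2}}I,
\end{equation*}
where $R=R(f,\beta,N,Q)>0$ is the constant supplied by that proposition.

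The only thing left to verify is the approximation estimate (vi), which is an elementary planar inequality. Set $A = \sum_{n=M_{k_1}}^{N_{k_2}}|a_n|$ and $S = \sum_{n=M_{k_1}}^{N_{k_2}}a_nf^n(\xi)$ for $\xi$ in the final union. Since $|f^n(\xi)|=1$, we have $|S|\leq A$, and by the rotated real-part bound, $\re{e^{-i\theta}S}\geq RA$. Expanding,
\begin{equation*}
 |w-S|^2 = |w|^2 - 2|w|\re{e^{-i\theta}S} + |S|^2 \leq |w|^2 - 2|w|RA + A^2.
\end{equation*}
We want to compare this with $(|w|-rA)^2 = |w|^2 - 2r|w|A + r^2A^2$. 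Subtracting, we need $A^2(1-r^2) \leq 2|w|A(R-r)$, i.e.\ $(1-r^2)A \leq 2(R-r)|w|$. Using the hypothesis $A\leq r|w|$, it is enough to require $r(1-r^2)\leq 2(R-r)$, i.e.\ $r(3-r^2)\leq 2R$. This holds as soon as $r\leq 2R/3$, say. Choose
\begin{equation*}
 r = r(f,\beta,N,Q) \coloneq \min\left\{\tfrac{2R}{3},R\right\}>0;
\end{equation*}
then $|w-S|\leq |w|-rA$, which is precisely (vi).

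The main (very small) obstacle is the bookkeeping in the last display: the bound on $|S|$ that comes for free is $|S|\leq A$, which is worse than what one might naively like, and so the quadratic estimate has to be done carefully with the hypothesis $A\leq r|w|$ to absorb the $A^2$ term. Apart from this short algebraic check, the corollary is essentially a rephrasing of Proposition~\ref{prop:LongBlocksConstruction} in which the distinguished direction has been rotated from the positive real axis to the direction of $w$, and the rest of the conclusions are inherited unchanged.
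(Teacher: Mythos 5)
Your proof is correct and follows essentially the same route as the paper: reduce to Proposition~\ref{prop:LongBlocksConstruction} and then convert the real-part lower bound into the distance estimate. The paper simply cites ``the same geometric argument as [Donaire--Nicolau, Corollary 3.1]'' without spelling it out, whereas you make it self-contained via the rotation trick plus the quadratic expansion; these are two phrasings of the same elementary fact. Two small remarks on your bookkeeping, both fine but worth noting: the sequences $\{(M_k,N_k)\}$ depend on $\{a_n\}$ only through the moduli $|a_n|$ (via the choice of $t_k$), so replacing $a_n$ by $a_n e^{-i\theta}$ does not change the block structure and the same families $\cF_k$ serve both sequences, as you implicitly use; and one should check $|w|-rA\geq 0$ before taking square roots, which follows from $A\leq r|w|$ and $r<1$ (indeed $R<1$ in \eqref{eq:LongBlocksRealPart}, so $r=2R/3<1$), giving $|w|-rA\geq (1-r^2)|w|\geq 0$.
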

\begin{proof}
 The proof uses the same geometric argument than the one applied in
 \cite[Corollary~3.1]{ref:DonaireNicolau} to a single point.
 The only difference is that one has to apply it to all points in
 $\bigcup_{I\in\cF_{k_2}}I$ given by Proposition~\ref{prop:LongBlocksConstruction}.
\end{proof}

\section{Proof of main results}
\label{sec:ProofMainResults}
We begin this section by proving Theorem~\ref{thm:PaleyWeissDimensionThm}
using Theorem~\ref{thm:GeneralPositiveExample}.

\begin{proof}[Proof of Theorem~\ref{thm:PaleyWeissDimensionThm}]
 Let $0 < \delta < 1$ be arbitrary.
 Set $\f(t) = t^{1-\delta}.$
 In particular, note that
 \begin{equation*}
  \psi(t) \coloneq \frac{\varphi(t)}{t} = t^{-\delta}
 \end{equation*}
 is decreasing and $\lim_{t\to 0^+} \psi(t) = \infty.$
 Take $C = c/(2d)$ where the constant $0 < c < 1$ and the positive integer $d$
 are given by Lemma~\ref{lemma:AlternativeInductiveStep} and depend only on $f.$
 If we define $\{\e_k\}$ by~\eqref{eq:EpsilonKs}, we get
 \begin{equation*}
  \varepsilon_k = \frac{1}{4} \min\{C,C^{1/\delta}\} = \frac{C^{1/\delta}}{4}.
 \end{equation*}
 In other words, $\e_k = \e = \e(\varphi,f)$ for all $k \geq 1,$
 so we also get $G_k = G = G(\varphi,f)$ for all $k.$

 Now, we show that for any nonsummable sequence $\{a_n\}$ of complex numbers tending to zero,
 we can choose $0 < \beta < 1$ and a positive integer $N$ for which the requirements
 of Theorem~\ref{thm:GeneralPositiveExample} are satisfied.
 Let $Q$ be the integer given by Theorem~\ref{thm:GeneralPositiveExample}.
 Choose
 \begin{equation*}
  \beta = \frac{1}{\sqrt{G}}
 \end{equation*}
 and pick a positive integer $N$ large enough so that
 conditions~\eqref{eq:FirstInequalityN} and~\eqref{eq:SecondInequalityN} are satisfied.
 Let $\{(\overline{M}_k,\overline{N}_k)\}$ be the sequences defined
 by~\eqref{eq:SequencesGeneralCase}.
 Then, since $\overline{N}_k - \overline{M}_k = G$ for each $k \geq 1,$
 Cauchy--Schwarz inequality and our choice of $\beta$ give
 condition~\eqref{eq:LongBlockReverseCauchy}.
 Moreover, since $|a_n| \longrightarrow 0$ as $n \to \infty,$
 we also get condition~\eqref{eq:LongBlockTendingToZero}.
 Therefore, Theorem~\ref{thm:GeneralPositiveExample} yields that
 for any $w \in \complex,$ we have that $\hausdorff{1-\delta}{A(w)} > 0.$
 The fact that $\delta$ was arbitrary implies that $\dimension{A(w)} = 1,$
 as we wanted to show.
\end{proof}

The rest of this section is devoted to proof Theorem~\ref{thm:GeneralPositiveExample},
which includes the case of a general measure function $\varphi.$

\begin{proof}[Proof of Theorem~\ref{thm:GeneralPositiveExample}]
 Given a finite Blaschke product $f$ with $f(0) = 0$ which is not a rotation,
 consider the constants $0 < \e = \e(f) < 1$ and $0 < C = C(f) < 1$ and
 the integers $d = d(f)$ and $Q = Q(f)$ given by Corollary~\ref{corl:BuildingBlock}.
 Given a sequence $\{a_n\}$ of complex numbers tending to zero but such that
 \begin{equation*}
  \sum_{n=1}^\infty |a_n| = \infty,
 \end{equation*}
 assume that there are $0 < \beta < 1$ and a (large enough) positive integer $N$ so that
 conditions~\eqref{eq:FirstInequalityN}--\eqref{eq:LongBlockReverseCauchy}
 are satisfied.
 Consider also the sequences $\{\e_k\}$ (defined by~\eqref{eq:EpsilonKs}),
 $\{(\overline{M}_k,\overline{N}_k)\}$ (defined by~\eqref{eq:SequencesGeneralCase})
 and $\{(M_k,N_k)\}$ (defined by~\eqref{eq:ConstructionSequences}).
 Lastly, consider a fixed $w \in \complex.$

 Pick $z_0 \in \disk \setminus \{0\}$ such that $|f(z_0)| < \e$ and
 set $J_0 \coloneq dI(z_0)$ and $\cG_0 = \{J_0\}.$
 We define inductively a sequence $(\cG_k)^{\infty}_{k=0}$
 of nonempty finite families of pairwise disjoint closed arcs,
 having the following properties:
 \begin{enumerate}[(i)]
  \item
  \label{item:GeneralConstructionFirst}
  For all $k=0,1,2,\ldots$ and for all $J\in\cG_k,$
  there exists $I\in\cG_{k+1}$ with $I\subseteq J.$
  \item
  \label{item:GeneralConstructionSecond}
  For all $k=0,1,2,\ldots$ and for all $I\in\cG_{k+1},$
  there exists $J \in \cG_k$ such that $I \subseteq J.$
  \item
  \label{item:GeneralConstructionVanishingLength}
  $\lim_{k\to\infty} \max\ci{I\in\cG_k} \measure{I} = 0.$
 \end{enumerate}

 At the same time, for each $k=0,1,2,\ldots,$ for every $J \in \cG_{k}$
 we will define inductively positive integers $t_{k+1}(J),$ $L_{k+1}(J)$ and $U_{k+1}(J)$
 with $L_{k+1}(J) < U_{k+1}(J).$
 They will be chosen so that, for every sequence $\{J_k\}_{k=0}^{\infty}$ of arcs with
 $J_k \in \cG_k$ and $J_{k+1} \subseteq J_k,$ $k=0,1,2,\ldots$ the following properties hold:
 \begin{enumerate}[(i)]
  \setcounter{enumi}{3}
  \item
  \label{item:GeneralConstructionIntegerSequences}
  $L_1(J_0) = 1,$ $t_1(J_0)=1,$ the sequence $\{t_{k+1}(J_k)\}^{\infty}_{k=0}$ is strictly increasing
  and
  \begin{equation*}
   U_{k+1}(J_k) = N_{t_{k+1}(J_k)},\qquad
   L_{k+2}(J_{k+1}) = M_{t_{k+1}(J_{k})+1},
  \end{equation*}
  for all $k=0,1,2,\ldots.$
  In particular $L_{k+2}(J_{k+1}) = U_{k+1}(J_k)+Q,$ for all $k=0,1,2,\ldots.$
  \item
  \label{item:GeneralConstructionSizeControl}
  For all $k=0,1,2,\ldots$ and for all $J \in \cG_k,$
  there will hold that
  \begin{equation}
   \label{eq:GeneralConstructionSizeControl}
   \eta \leq \measure{f^{U_{k+1}(J_k)}(I)} \leq 4\eta
  \end{equation}
  for all $I \in \cG_{k+1}$ with $I\subseteq J$
  and where $0 < \eta < 1/4$ is the constant given by
  Lemma~\ref{lemma:AlternativeInductiveStep}, for all $k=0,1,2,\ldots.$
  \item
  \label{item:GeneralConstructionVanishingBlocks}
  There will hold
  \begin{equation}
  \label{eq:GeneralConstructionVanishingBlocks}
   \lim_{k\to\infty} \sum_{n=U_{k+1}(J_k)}^{U_{k+2}(J_{k+1})} |a_n| = 0.
  \end{equation}
  \item
  If we set
  \begin{equation*}
   F_k^{J_k} \coloneq \sum_{n=1}^{U_{k+1}(J_k)} a_nf^n,\quad k=0,1,2,\ldots,
  \end{equation*}
  and
  \begin{equation*}
   d_{k}^{J_k} \coloneq \max\{|F_{k-1}^{J_{k-1}}(\xi)-w|\colon \xi \in J_k\},\quad k=1,2,\ldots,
  \end{equation*}
  then there will hold
  \begin{equation}
   \label{eq:GeneralConstructionVanishingErrors}
   \lim_{k\to\infty} d_k^{J_k} = 0.
  \end{equation}
 \end{enumerate}

 Let $\{J_k\}^{\infty}_{k=0}$ be a sequence of arcs as before
 and set
 \begin{equation*}
  \alpha_k^{J_k} \coloneq \max\{|F_{k-1}^{J_{k-1}}(\xi)-F_{k-1}^{J_{k-1}}(\xi')|\colon
  \xi,\xi' \in J_{k}\},\quad k=1,2,\ldots.
 \end{equation*}
 Observe that after the construction is complete, Corollary~\ref{corl:OscillationControlSequence} coupled with estimate~\eqref{eq:GeneralConstructionSizeControl} (without using~\eqref{eq:GeneralConstructionVanishingBlocks} or~\eqref{eq:GeneralConstructionVanishingErrors}) will yield
 \begin{equation}
  \label{eq:VanishingIntermediateErrors}
  \lim_{k\to\infty} \alpha_k^{J_k} = 0,
 \end{equation}
since $\lim_{n\rightarrow\infty}a_n=0$.
 Moreover, once we are done, combining~\eqref{eq:GeneralConstructionVanishingBlocks} and~\eqref{eq:GeneralConstructionVanishingErrors} we will have that
 \begin{equation*}
  \sum_{n=1}^{\infty} a_nf_n(\xi) = w,
 \end{equation*}
 where $\xi$ is the unique point of $\bigcap_{k=0}^{\infty} J_k,$
 for every sequence of arcs $\{J_k\}^{\infty}_{k=0}$ with $J_k \in \cG_k$ and $J_{k+1} \subseteq J_k$ for $k=0,1,2\ldots.$

 Finally, for all $k = 1,2,\ldots$ and for all $J\in\cG_{k-1},$
 we will define nonempty families $\{\cF_{t}(J)\}$ of pairwise disjoint subarcs of $J,$
 where $t$ ranges over $1,\ldots,t_{k}(J)$ if $k = 1,$
 and over $t_{k-1}(J')+1,\ldots,t_k(J)$ if $k > 1,$
 where in the latter case $J'$ is the unique arc in $\cG_{k-2}$ with $J \subseteq J'.$
 These nonempty families of arcs will satisfy for all $k=1,2,\ldots$ the following properties:
 \begin{enumerate}[(i)]
 \setcounter{enumi}{7}
 \item
 \label{item:GeneralConstructionIterativeLemmaFirst}
 For each admissible $t \leq t_{k}(J)-1$ and for each $I \in \cF_{t+1}(J),$
 there exists $L \in \cF_{t}(J)$ such that $I \subseteq L.$
 \item
 For each admissible $t \leq t_{k}(J)-1$ and for each $L \in \cF_{t}(J),$
 there exists $I \in \cF_{t+1}(J)$ such that $I \subseteq L.$
 \item
 For each $t \leq t_k(J)-1$ there holds
 \begin{equation*}
  \max_{I \in \cF_{t+1}(J)}
   \left\{\frac{\measure{I}}{\measure{I'}}\right\} \leq \e_{t+1}
 \end{equation*}
 where $I'$ is the unique arc in $\cF_{t}(J)$ with $I \subseteq I'.$
 \item
 For each admissible $t \leq t_k(J)-1$ and for each $L \in \cF_t(J),$
 there holds
 \begin{equation*}
  \sum_{\substack{I\in\cF_{t+1}(J)\\ I \subseteq L}} \measure{I} \geq C \cdot \measure{L}.
 \end{equation*}
 \item
 \label{item:GeneralConstructionIterativeLemmaLast}
 There holds $\cF_{t_k(J)}(J) = \{I \in \cG_{k}\colon I \subseteq J\}.$
\end{enumerate}

 To start the process, we just apply Corollary~\ref{corl:BuildingBlock}
 on $J_0$ between $M_1 = 1$ and $N_1.$
 So $\cG_0 \coloneq \{J_0\},$ $L_{1}(J_0) = M_1 = 1,$
 $U_1(J_0) = N_1,$ $t_1(J_0) = 1$ and $\cG_1 = \cF_1(J_0)$
 is the last family of subarcs of $J_0$ we obtain.
 Conditions~\eqref{item:GeneralConstructionFirst}--\eqref{item:GeneralConstructionSecond},
 \eqref{item:GeneralConstructionIterativeLemmaFirst}--\eqref{item:GeneralConstructionIterativeLemmaLast}
 and \eqref{item:GeneralConstructionIntegerSequences}--\eqref{item:GeneralConstructionSizeControl}
 are satisfied in this first step.

 Now let us describe the inductive step.
 Assume that we have defined all the various objects that are indexed by indices up to and including $k,$
 for some positive integer $k.$
 We want to define the objects that will be indexed by $k+1.$

 Let $J\in\cG_{k}$ be arbitrary.
 We want to define $L_{k+1}(J_k),U_{k+1}(J_k)$
 and $\{I \in \cG_{k+1}\colon I \subseteq J\}.$
 Let us set $J_k \coloneq J$ and let $J_0,\ldots,J_{k-1}$ be the predecessors
 of $J$ in $\cG_0,\ldots,\cG_{k-1}$ respectively.
 We have by the inductive hypothesis
 \begin{equation*}
  \eta \leq \measure{f^{U_{k}(J_{k-1})}(J)} \leq 4\eta,
 \end{equation*}
 so by part~\eqref{item:BoundaryToInterior} of Lemma~\ref{lemma:RelationInteriorBoundary}
 we have $|f^{U_k(J_{k-1})}(z(J))| \leq \gamma.$
 Pick $z^{\ast}(J)$ on the same radius as $z(J)$ so that $dI(z^{\ast}(J)) = J$
 (as it was done in the proof of Proposition~\ref{prop:LongBlocksConstruction}).
 Then, by Schwarz Lemma we have $|f^{U_{k}(J_{k-1})}(z^{\ast}(J))| \leq \gamma_1.$
 So we set $L_{k+1}(J_k) \coloneq U_{k}(J_{k-1}) + Q$ and
 observe that $|f^{L_{k+1}(J_k)}(z^{\ast}(J))| < \e.$
 Moreover, we have $U_{k}(J_{k-1}) = N_{t_{k}(J_{k-1})},$
 thus $L_{k+1}(J_k) = M_{t_{k}(J_{k-1})+1}$
 from the way the sequences $\{M_l\}^{\infty}_{l=1}$ and $\{N_l\}^{\infty}_{l=1}$ were defined.
 For brevity, we set $t \coloneq t_{k}(J_{k-1}).$

 To continue, we distinguish two cases.
 Let $r = r(f,\beta,Q,N) > 0$ be the constant given by Corollary~\ref{corl:BuildingBlock}.

 \textbf{Case 1.} There holds
 \begin{equation*}
  \max_{l\geq t} \sum_{n=N_l}^{N_{l+1}} |a_n| \leq \frac{r d_{k}^{J_{k}}}{2}.
 \end{equation*}
 We pick the smallest $t_1 \geq t+1$ such that
 \begin{equation*}
  \sum_{n=M_{t+1}}^{N_{t_1}} |a_n| \geq \frac{rd_{k}^{J_{k}}}{2}.
 \end{equation*}
 Set $t_{k+1}(J_k) \coloneq t_1.$
 The minimality of $t_1$ and the assumption for the present case gives
 \begin{equation*}
  rd_{k}^{J_{k}} \geq \sum_{n=M_{t+1}}^{N_{t_1}} |a_n|
  \geq \frac{r d_{k}^{J_{k}}}{2}.
 \end{equation*}
 We set $U_{k+1}(J_k) \coloneq N_{t_1}.$
 Pick $\xi_k \in J_k$ with
 \begin{equation*}
  |w-F_{k-1}^{J_{k-1}}(\xi_k)|=d_k^{J_k}.
 \end{equation*}
 We apply then Corollary~\ref{corl:BuildingBlock} from $t+1$ to $t_1,$
 with $w$ replaced by $w-F_{k-1}^{J_{k-1}}(\xi_k),$
 and we let $\{I \in \cG_{k+1}\colon I \subseteq J\}$ be
 the final family of subarcs of $J_k$ we obtain.
 We also let $\cF_{t_{k}(J_{k-1})+1}(J_k),\ldots,\cF_{t_{k+1}(J_k)}(J_k)$ be
 the sequence of intermediate families of arcs we obtain.

 Let $I$ be any arc of $\{I \in \cG_{k+1}\colon I \subseteq J\}.$
 Let $\xi \in I$ be arbitrary.
 Then, we have
 \begin{align*}
  \left|w-F_{k-1}^{J_{k-1}}(\xi_k)-\sum_{n=M_{t+1}}^{N_{t_1}}a_nf^n(\xi)\right|
  &\leq |w-F_{k-1}^{J_{k-1}}(\xi_k)| - r\sum_{n=M_{t+1}}^{N_{t_1}}|a_n|\\
  &\leq d_{k}^{J_{k}}\left(1-\frac{r^2}{2}\right),
 \end{align*}
 therefore
 \begin{equation*}
  |w-F_{k}^{J_k}(\xi)| \leq
  \left(1-\frac{r^2}{2}\right) d_{k}^{J_{k}} + \alpha_k^{J_k} + \sum_{n=N_{t}}^{M_{t+1}-1}|a_n|.
 \end{equation*}
 In particular, we get that
 \begin{equation*}
  d_{k+1}^{I} \leq
  \left(1-\frac{r^2}{2}\right) d_{k}^{J_{k}} + \alpha_k^{J_k}
  + \sum_{n=U_{k}(J_{k-1})}^{L_{k+1}(J_k)}|a_n|.
 \end{equation*}

 \textbf{Case 2.} There holds
 \begin{equation*}
  \max_{l\geq t}\sum_{n=N_l}^{N_{l+1}}|a_n|>\frac{r d_{k}^{J_{k}}}{2}.
 \end{equation*}
 In this case, we just pick $U_{k+1}(J_k) \coloneq N_{t+1}$ and
 set $t_{k+1}(J_k) \coloneq t+1.$
 We apply Corollary~\ref{corl:BuildingBlock} on $J_k$ between $M_{t+1}$ and $N_{t+1}.$
 We let $\{I \in \cG_{k+1} \colon I \subseteq J\} = \cF_{t+1}(J)$ be
 the last family of subarcs of $J_k$ we obtain.
 As in Case 1, pick $\xi_k \in J$ with
 \begin{equation*}
  |w-F_{k-1}^{J_{k-1}}(\xi_k)| = d_k^{J_k}.
 \end{equation*}
 Let $I$ be any arc of $\{I \in \cG_{k+1} \colon I \subseteq J\}.$
 Let $\xi \in I$ be arbitrary.
 Then, we have
 \begin{align*}
  &\left|\sum_{n=1}^{N_{t+1}}a_nf^n(\xi)-w\right| \leq
  |F_{k-1}^{J_{k-1}}(\xi)-w| + \sum_{n=N_t+1}^{N_{t+1}}|a_n|\\
  &\leq |F_{k-1}^{J_{k-1}}(\xi)-F_{k-1}^{J_{k-1}}(\xi_k)|
  + |F_{k-1}^{J_{k-1}}(\xi_k)-w| + \sum_{n=N_t+1}^{N_{t+1}}|a_n|\\
  &\leq \alpha_k^{J_{k}} + d_k^{J_k} + \sum_{n=N_t+1}^{N_{t+1}}|a_n|
  \leq \alpha_k^{J_{k}} + \left(1+\frac{2}{r}\right) \max_{l\geq t}\sum_{n=N_l}^{N_{l+1}} |a_n|.
 \end{align*}
 Therefore, in this case we have that
 \begin{equation*}
  d_{k+1}^{I} \leq \alpha_k^{J_{k}}
  + \left(1+\frac{2}{r}\right) \max_{l\geq t} \sum_{n=N_l}^{N_{l+1}} |a_n|.
 \end{equation*}

 This completes the inductive construction.
 Let us check that all required conditions are satisfied.
 Conditions~\eqref{item:GeneralConstructionFirst}--\eqref{item:GeneralConstructionVanishingLength}
 and~\eqref{item:GeneralConstructionIterativeLemmaFirst}--\eqref{item:GeneralConstructionIterativeLemmaLast}
 are clearly satisfied due to Corollary~\ref{corl:BuildingBlock}.
 Moreover, the construction and Lemma~\ref{lemma:AlternativeInductiveStep}
 also ensure conditions~\eqref{item:GeneralConstructionIntegerSequences}
 and~\eqref{item:GeneralConstructionSizeControl}.

 Next, we show that condition~\eqref{eq:GeneralConstructionVanishingErrors} is satisfied.
 Let $\{J_k\}^{\infty}_{k=0}$ be any sequence of arcs with $J_{k+1} \subseteq J_k$ and
 $J_k \in \cG_k$ for each $k \geq 0.$
 The construction yields
 \begin{equation*}
  d_{k}^{J_k} \leq
  \left(1-\frac{r^2}{2}\right) d_{k-1}^{J_{k-1}}
  + \alpha_{k-1}^{J_{k-1}} + \sum_{n=U_{k-1}(J_{k-2})}^{L_{k}(J_{k-1})} |a_n|
  + \left(1+\frac{2}{r}\right) \max_{l \geq t_{k-1}(J_{k-2})} \sum_{n=N_l}^{N_{l+1}} |a_n|
 \end{equation*}
 for all $k \geq 2.$
 Now, using that
 \begin{equation*}
  1-\frac{r^2}{2} < 1,
 \end{equation*}
 observation~\eqref{eq:VanishingIntermediateErrors},
 the fact that $L_{k}(J_{k-1}) - U_{k-1}(J_{k-2}) = Q,$
 the assumption that $\lim_{n\to\infty} |a_n| = 0$ and
 \begin{equation*}
  \lim_{t\to\infty} \max_{l \geq t} \sum_{n=N_l}^{N_{l+1}} |a_n| = 0,
 \end{equation*}
 we conclude that $\lim_{k\to\infty} d_k^{J_k} = 0.$

 Finally, we establish condition~\eqref{item:GeneralConstructionVanishingBlocks}.
 The construction shows that
 \begin{equation*}
  \sum_{n=U_{k+1}(J_k)}^{U_{k+2}(J_{k+1})} |a_n| \leq
  \max \left(\sum_{n=U_{k+1}(J_k)}^{L_{k+2}(J_{k+1})} |a_n| + rd_{k+1}^{J_{k+1}},
  \sum_{n=N_{t_{k+1}(J_k)}}^{N_{t_{k+1}(J_k)+1}} |a_n|\right)
 \end{equation*}
 for all $k \geq 0.$
 Since $\lim_{k\to\infty} d_k^{J_k} = 0$ and
 \begin{equation*}
  \lim_{t\to \infty} \sum_{n=N_t}^{N_{t+1}} |a_n| = 0
 \end{equation*}
 we conclude immediately the desired result.

 We define the Cantor like set
 \begin{equation*}
  E \coloneq \bigcap_{k=1}^{\infty} \bigcup_{I \in \cG_k} I.
 \end{equation*}
 Then it is clear that
 \begin{equation*}
  \sum_{n=1}^{\infty} a_nf^{n}(\xi) = w
 \end{equation*}
 for all $\xi \in E,$ so that $E \subseteq A(w).$
 Therefore, by monotonicity of Hausdorff measure, once we see that $\hausdorff{\varphi}{E} > 0$
 we will have proved the theorem.
 To this end, we make use of Lemma~\ref{lemma:HausdorffMeasure}.
 Set $\cF_0 \coloneq \{J_0\}$ and
 \begin{equation*}
  \cF_{t} \coloneq \{I\colon \text{ there is } k \geq 0 \text{ and } J \in \cG_k
  \text{ such that } I \in \cF_t(J) \text{ for some admissible } t\}.
 \end{equation*}
 It is clear that $\{\cF_t\}^{\infty}_{t=0}$ satisfies
 the conditions of Lemma~\ref{lemma:HausdorffMeasure} by the previous construction
 and that
 \begin{equation*}
  E = \bigcap_{t=0}^{\infty} \bigcup_{I \in \cF_t} I,
 \end{equation*}
 thus concluding the proof.
\end{proof}

 \section{Theorem~\ref{thm:PaleyWeissDimensionThm} is optimal}
 \label{sec:Optimality}
 The main goal of this section is to prove the Theorem~\ref{thm:Optimality},
 which we restate here for the reader's convenience.
 \theoremstyle{plain}
 \newtheorem*{theorem:Optimality}{Theorem~\ref{thm:Optimality}}
 \begin{theorem:Optimality}
  Consider a measure function $\varphi$ such that
  \begin{equation}
   \label{eq:MeasureFunctionRestrictiveness}
   \lim_{t \to 0^+} \frac{\varphi(t)}{t^s} = 0
  \end{equation}
  for any $0 < s < 1.$
  Then, there exist a finite Blaschke product $f$ and
  a sequence $\{a_n\} = \{a_n\}(\varphi,f)$ of complex numbers tending to zero with
  $\sum_n |a_n| = \infty$ such that for any $w \in \complex,$ the set
  \begin{equation*}
   A(w) \coloneq \left\{\xi \in \ddisk\colon \sum_{n=1}^\infty a_n f^n(\xi)
   \text{ converges and } \sum_{n=1}^\infty a_n f^n(\xi) = w\right\}
  \end{equation*}
  has Hausdorff measure $\hausdorff{\varphi}{A(w)} = 0.$
 \end{theorem:Optimality}
 In fact, one can see that our construction yields something a bit stronger.
 Not only we will have $\sum_n |a_n| = \infty,$ but also $\sum_n |a_n|^2 = \infty.$
 This turns out to be absolutely necessary for our argument.
 For this reason, one could ask if it is possible to construct a similar example with a sequence
 with $\sum_n |a_n|^2 < \infty$ or if one can have a stronger conclusion
 when considering only such sequences.

 First, we can assume that
 \begin{equation}
  \label{eq:MeasureFunctionLessThanLebesgue}
  \lim_{t \to 0^+} \frac{t}{\varphi(t)} = 0
 \end{equation}
 because for any $w \in \complex$ we must have $\measure{A(w)} = \hausdorff{1}{A(w)} = 0$
 (otherwise, $\sum_n a_n f^n$ would be a constant function).
 Let us consider, for the rest of this section, a fixed measure function $\varphi$
 satisfying~\eqref{eq:MeasureFunctionRestrictiveness} and
 also~\eqref{eq:MeasureFunctionLessThanLebesgue}.
 In other words, we consider a fixed measure function that is more restrictive
 than any power function $\varphi_s(t) = t^s$ for any $0 < s <1,$
 but with the Lebesgue measure still being more restrictive than $\varphi.$
 Define
 \begin{equation*}
  \psi(t) \coloneq \frac{\varphi(t)}{t},
 \end{equation*}
 which we will assume without loss of generality to be strictly decreasing.
 Moreover, by~\eqref{eq:MeasureFunctionLessThanLebesgue} we have that
 $\lim_{t \to 0^+} \psi(t) = \infty.$
 For our later convenience, observe that
 condition~\eqref{eq:MeasureFunctionRestrictiveness} can be expressed in the following way.
 For any $0 < s < 1$ there exists $t_s > 0$ such that
 \begin{equation*}
  \psi(t) \leq t^{-s},\qquad \text{ for all } t \leq t_s.
 \end{equation*}
 Again without loss of generality, we can also assume that,
 if $s_1 > s_2,$ then $t_{s_1} > t_{s_2}.$

 To construct the examples that show Theorem~\ref{thm:Optimality},
 we will use finite Blaschke products of the form $f(z) = z^\nu,$
 for some integer $\nu \geq 2.$
 Thus, consider $\nu \geq 2$ to be fixed from now on.
 For a given sequence $\{a_n\}$ of complex numbers, we will denote
 \begin{equation*}
  F(z) \coloneq \sum_{n=1}^\infty a_n f^n(z) = \sum_{n=1}^\infty a_n z^{\nu^n},\qquad
  z \in \disk,
 \end{equation*}
 and we will refer to its partial sums as
 \begin{equation*}
  F_N(z) \coloneq \sum_{n=1}^{N} a_n f^n(z) = \sum_{n=1}^N a_n z^{\nu^n},\qquad
  z \in \disk,\, N \geq 1.
 \end{equation*}
 For completeness, we define $F_0 \equiv 0.$ 
 Our construction of a suitable sequence $\{a_n\}$ is based on
 an argument due to Makarov~\cite[Section~5.C]{ref:MakarovConformalMappings}.

 Note that $F$ is a function in the Bloch space
 (see~\cite{ref:AndersonCluniePommerenkeBlochFunctions} for the result for lacunary series
 and~\cite[Theorem~4.5]{ref:NicolauConvergenceIterates} for iterates of general inner functions).
 In particular, the averages
 \begin{equation*}
  \frac{1}{\measure{I}} \int_I F(\xi)\, \mathdm(\xi)
 \end{equation*}
 are well defined for any arc $I \subseteq \ddisk$
 (see~\cite[Section~2]{ref:LlorenteMartingalesAndApplications}
 for an exposition of this and other facts relating Bloch functions and dyadic martingales).
 We will use these averages to define a martingale as follows.
 Consider, for $n \geq 0,$ the collection $\cF_n$ of $\nu$-adic arcs
 of length $\nu^{-(n+1)}.$
 In other words, we set
 \begin{equation*}
  \cF_n \coloneq \{[\exp(2 \pi i k \nu^{-(n+1)}),\exp(2 \pi i (k+1) \nu^{-(n+1)}))
  \colon 0 \leq k < \nu^{n+1}\}
 \end{equation*}
 for $n \geq 0.$
 We define the $\nu$-adic martingale $\{M_n\}_{n=0}^\infty$ as
 \begin{equation}
  \label{eq:MartingaleDefinition}
  M_n(\xi) \coloneq (F|\cF_n)(\xi)
  = \frac{1}{\measure{I(\xi)}} \int_{I(\xi)} F(z)\, \mathdm(z),
 \end{equation}
 where $I(\xi)$ is the unique arc in $\cF_n$ with $\xi \in I(\xi).$
 The following lemma states that the martingale $\{M_n\},$
 at a given time $N,$ only sees the partial sum $F_N.$

 \begin{lemma}
  \label{lemma:MartingalePartialSums}
  Consider $F,$ $F_n$ and $\{M_n\}$ defined as before.
  Then, for a given integer $N \geq 0,$ we have that
  \begin{equation*}
   M_N(\xi) = (F_N|\cF_N)(\xi)
   = \frac{1}{\measure{I(\xi)}} \int_{I(\xi)} F_N(z)\, \mathdm(z).
  \end{equation*}
 \end{lemma}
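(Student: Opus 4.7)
The claim reduces to verifying that
\begin{equation*}
 \int_I F(\xi)\, \mathdm(\xi) = \int_I F_N(\xi)\, \mathdm(\xi)
\end{equation*}
for every $I \in \cF_N$, since the second equality in the lemma statement is simply the definition of the conditional expectation $(F_N \mid \cF_N)$ applied to the integrable function $F_N$. Equivalently, I want to show that $\int_I (F-F_N)(\xi)\,\mathdm(\xi)=0$, where $F - F_N = \sum_{n=N+1}^{\infty} a_n \xi^{\nu^n}$.

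The essential arithmetic point is this: an arc $I \in \cF_N$ has normalised length $\nu^{-(N+1)}$, and for every $n \geq N+1$ we have $\nu^n \cdot \nu^{-(N+1)} = \nu^{n-N-1} \in \integer_{\geq 1}$. Parametrising $I = \{e^{2\pi i \theta}\colon \theta \in [k\nu^{-(N+1)},(k+1)\nu^{-(N+1)})\}$, a direct computation gives
\begin{equation*}
 \int_I \xi^{\nu^n}\, \mathdm(\xi) = \int_{k\nu^{-(N+1)}}^{(k+1)\nu^{-(N+1)}} e^{2\pi i \nu^n \theta}\, d\theta = 0
\end{equation*}
for each $n \geq N+1$, since the integrand completes $\nu^{n-N-1}$ full periods. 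This is the monomial-level orthogonality underlying the whole statement.

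To pass from individual monomials to the full tail series, I would use radial approximants. For each fixed $0 < r < 1$ the series $\sum_n a_n r^{\nu^n} \xi^{\nu^n}$ converges absolutely and uniformly on $\overline{\disk}$ (the coefficients are bounded and $r^{\nu^n}$ decays super-geometrically), so termwise integration is legitimate and the calculation above yields
\begin{equation*}
 \int_I F(r\xi)\, \mathdm(\xi) = \sum_{n=1}^{N} a_n r^{\nu^n} \int_I \xi^{\nu^n}\, \mathdm(\xi) = \int_I F_N(r\xi)\, \mathdm(\xi).
\end{equation*}
Letting $r \to 1^-$, the right-hand side tends to $\int_I F_N(\xi)\, \mathdm(\xi)$ by uniform convergence of the polynomial $F_N(r\,\cdot\,) \to F_N$ on $\overline{\disk}$, and the left-hand side tends to $\int_I F(\xi)\, \mathdm(\xi)$ by the convention under which arc averages of the Bloch function $F$ are defined (as noted in the paragraph preceding \eqref{eq:MartingaleDefinition}).

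The only potentially delicate point is the $r \to 1^-$ passage on the left, but the paper takes the well-definedness of arc averages of Bloch functions as given, so no real obstacle remains. In essence, the lemma expresses the fact that the Fourier spectrum of the tail $F - F_N$ sits entirely in frequencies that are multiples of $\nu^{N+1}$, and integration over an arc of length $\nu^{-(N+1)}$ annihilates precisely those frequencies.
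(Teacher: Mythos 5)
Your proof is correct, and the central arithmetic observation is the same as the paper's: for $n \geq N+1$ the frequency $\nu^n$ is a multiple of $\nu^{N+1}$, so integrating $\xi^{\nu^n}$ over a $\nu$-adic arc of length $\nu^{-(N+1)}$ gives zero. Where you differ is in how the tail is handled. You argue via radial approximants: integrate $F(r\,\cdot\,)$ termwise for $r<1$, observe $\int_I F(r\xi)\,\mathdm(\xi) = \int_I F_N(r\xi)\,\mathdm(\xi)$, then let $r\to 1^-$, invoking the radial-limit meaning of arc averages of Bloch functions. The paper instead performs the change of variable $w = z^{\nu^{N+1}}$, which carries $I$ bijectively onto $\ddisk$, absorbs the whole tail $F_{>N}$ at once into
\begin{equation*}
(F_{>N}\mid\cF_N)(\xi) = \frac{1}{\nu^{N+1}\measure{I(\xi)}}\int_{\ddisk}\sum_{m>N}a_m w^{\nu^{m-N-1}}\,\mathdm(w),
\end{equation*}
and kills it by Cauchy's formula, since the re-indexed series has no constant term. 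The paper's version is more compact and avoids writing out the $r\to 1^-$ limit, but it implicitly leans on the same interpretation of boundary integrals of Bloch (hence not pointwise-defined) functions. Your version is more explicit about where that issue enters, at the modest cost of an extra limiting step; neither argument is circular, and both are legitimate given the Bloch averaging convention the paper adopts in the paragraph before \eqref{eq:MartingaleDefinition}.
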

 \begin{proof}
  Fix a positive integer $m > N$ and consider $I \in \cF_N.$
  Observe that by the change of variable formula it holds that
  \begin{equation*}
   \frac{1}{\measure{I}} \int_I z^{\nu^m}\, \mathdm(z)
   = \frac{1}{\nu^{N+1}\measure{I}} \int_{\ddisk} z^{\nu^{m-N-1}}\, \mathdm(z).
  \end{equation*}
  Since $m > N,$ the last integral vanishes.

  Next, write
  \begin{equation*}
      F(z) = F_N(z) + F_{>N}(z),
  \end{equation*}
  so that by linearity we get
  \begin{equation*}
      M_N(\xi) = (F_N|\cF_N)(\xi) + (F_{>N}|\cF_N)(\xi).
  \end{equation*}
  Using the same change of variable as before to the function $F_{>N},$ we see that
  \begin{equation*}
      (F_{>N}|\cF_N)(\xi) =
      \frac{1}{\nu^{N+1}\measure{I(\xi)}}
      \int_{\ddisk} \sum_{m>N} a_{m} z^{\nu^{m-N-1}}\, \mathdm(z)
  \end{equation*}
  and the function in the last integral is an analytic function on the disk
  whose value at the origin is zero.
  Therefore, by Cauchy's formula we get that
  \begin{equation*}
      (F_{>N}|\cF_N)(\xi) = 0,
  \end{equation*}
  which concludes the proof.
 \end{proof}

 For a given integer $n \geq 1,$ we define the increment $\Delta M_n$ of
 the martingale $\{M_n\}$ by
 \begin{equation*}
  \Delta M_n(\xi) \coloneq M_n(\xi) - M_{n-1}(\xi).
 \end{equation*}
 Since $\{M_n\}$ is a martingale,
 the increments $\{\Delta M_n\}$ are orthogonal in the $\Lp{2}$ sense.
 Later in our construction, we will set some of the coefficients $a_n$ equal to zero.
 Next result gives an estimate on $|\Delta M_n|$ when we have various null coefficients.

 \begin{lemma}
  \label{lemma:IncrementsWithNullCoefficients}
  Consider $F,$ $F_n,$ $\{M_n\}$ and the increments $\Delta M_n$ defined as before.
  Fix integers $N \geq 1$ and $0 \leq m \leq N-1.$
  In addition, assume that $\sup_n |a_n| \leq 1$ and that $a_n = 0$ for $N-m \leq n \leq N-1$
  (so if $m = 0,$ this last condition applies to no coefficients).
  Then there exist constants $C > 0$ and $0 < c = c(\nu) < 1$ such that
  \begin{equation*}
   \big||\Delta M_N| - c|a_N|\big| \leq C \nu^{-m}.
  \end{equation*}
 \end{lemma}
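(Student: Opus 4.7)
The strategy is to split $\Delta M_N$ into a dominant piece coming from the single nonzero high-frequency monomial $a_N z^{\nu^N}$ and a small error coming from the low-frequency tail $F_{N-m-1}$. By Lemma~\ref{lemma:MartingalePartialSums}, $M_N = (F_N|\cF_N)$ and $M_{N-1} = (F_{N-1}|\cF_{N-1})$, and the hypothesis that $a_n = 0$ for $N-m \leq n \leq N-1$ forces $F_{N-1} = F_{N-m-1}$ and $F_N = F_{N-m-1} + a_N z^{\nu^N}$. Hence by linearity,
\begin{equation*}
 \Delta M_N = \bigl[(F_{N-m-1}|\cF_N) - (F_{N-m-1}|\cF_{N-1})\bigr] + a_N\bigl[(z^{\nu^N}|\cF_N) - (z^{\nu^N}|\cF_{N-1})\bigr].
\end{equation*}

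For the main term, a direct computation on any arc $J \in \cF_{N-1}$ of length $\nu^{-N}$ gives $\int_J z^{\nu^N}\, \mathdm(z) = 0$, since the integrand traverses an integer number of complete periods. On any arc $I \in \cF_N$ of length $\nu^{-(N+1)}$, writing $I$ as an interval $[k/\nu^{N+1},(k+1)/\nu^{N+1}]$ in $\theta$ and changing variables, one gets
\begin{equation*}
 \frac{1}{\measure{I}} \int_I z^{\nu^N}\, \mathdm(z) = \frac{\nu\, e^{2\pi i k/\nu}}{2\pi i}\bigl(e^{2\pi i/\nu} - 1\bigr),
\end{equation*}
whose modulus is the constant $c = c(\nu) \coloneq \nu \sin(\pi/\nu)/\pi$, lying in $(0,1)$ since $\sin(x) < x$ for $x > 0$. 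Thus the main-term contribution to $\Delta M_N(\xi)$ has modulus exactly $c|a_N|$ at every $\xi \in \ddisk$.

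For the error term I would use the Bernstein-type estimate
\begin{equation*}
 \|F_{N-m-1}'\|_\infty \leq \sum_{n=1}^{N-m-1} |a_n|\,\nu^n \leq \frac{\nu^{N-m}}{\nu - 1},
\end{equation*}
which uses the assumption $\sup_n |a_n| \leq 1$. Combined with the arc lengths $\nu^{-N}$ and $\nu^{-(N+1)}$, this shows that both $(F_{N-m-1}|\cF_N)(\xi)$ and $(F_{N-m-1}|\cF_{N-1})(\xi)$ differ from $F_{N-m-1}(\xi)$ by at most $O(\nu^{-m})$, so their difference is $O(\nu^{-m})$. Applying the reverse triangle inequality $||A|-|B||\leq|A-B|$ to the decomposition above, with $B = a_N(z^{\nu^N}|\cF_N)(\xi)$ of modulus $c|a_N|$, yields $\bigl||\Delta M_N(\xi)| - c|a_N|\bigr| \leq C\nu^{-m}$ pointwise.

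No serious obstacle is expected. The only mildly delicate point is bookkeeping the dependence of the error constant $C$: it depends on $\nu$ through the factor $(\nu - 1)^{-1}$ and on the circumference normalization, but is independent of $m$, $N$, and the coefficients $\{a_n\}$, which is exactly what the statement requires.
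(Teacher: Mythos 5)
Your proof is correct, and it reaches the same constant $c = \nu\sin(\pi/\nu)/\pi$ as the paper, but you bound the error term by a genuinely different route. The paper's proof never separates off the low-frequency tail $F_{N-m-1}$ as a single object; instead it applies the change-of-variable identity
\begin{equation*}
\frac{1}{\measure{I}}\int_I z^{\nu^k}\,\mathdm(z)
= \frac{1}{\measure{J}}\int_J z\,\mathdm(z),
\qquad I\in\cF_N,\ J\in\cF_{N-k},
\end{equation*}
to rewrite $\Delta M_N(\xi)$ coefficient by coefficient as a telescoping sum of differences of averages of $z$ over consecutive-generation arcs, and then bounds each difference by $C\nu^{-j}$ via an elementary geometric estimate on endpoints of $\nu$-adic arcs; the hypothesis $a_k=0$ for $N-m\leq k\leq N-1$ kills the first $m$ terms of the geometric series. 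You instead observe that $F_{N-1}=F_{N-m-1}$, isolate the dominant monomial $a_N z^{\nu^N}$, and control the tail $(F_{N-m-1}|\cF_N)-(F_{N-m-1}|\cF_{N-1})$ by a Bernstein-type derivative bound $\|F_{N-m-1}'\|_{\infty}\leq\nu^{N-m}/(\nu-1)$ combined with the arc lengths $\nu^{-N}$ and $\nu^{-(N+1)}$. Both yield the same $O(\nu^{-m})$ error. Your route is more concise and makes the source of the gain (low frequency of the tail versus short arc length) especially transparent; the paper's telescoping argument is more self-contained in that it works entirely with averages of $z$ and avoids invoking a derivative bound, which is closer in spirit to how such lacunary-increment estimates are usually set up. Either way, the constant $C$ ends up depending only on $\nu$, as you note, which is what the statement requires.
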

 \begin{proof}
  First, consider a positive integer $k \leq N.$
  Let $I$ be any arc of $\cF_N.$
  Then, by the change of variable formula, it holds that
  \begin{equation*}
   \frac{1}{\measure{I}} \int_I z^{\nu^k}\, \mathdm(z)
   = \frac{1}{\nu^k\measure{I}} \int_J z\, \mathdm(z)
   = \frac{1}{\measure{J}} \int_J z\, \mathdm(z),
  \end{equation*}
  where $J$ is the arc in $\cF_{N-k}$ which is the image of $I$ under $z^{\nu^k}.$
  Therefore, for a given $\xi \in \ddisk,$
  we can express the increment $\Delta M_N (\xi)$ as
  \begin{equation}
   \label{eq:IncrementExpression}
   \begin{split}
    \Delta &M_N(\xi) = a_N \frac{1}{\measure{I_0}} \int_{I_0} z\, \mathdm(z)\\
    &+ \sum_{k=1}^{N-1} a_k
    \left(\frac{1}{\measure{I_{N-k}}} \int_{I_{N-k}} z\, \mathdm(z)
    - \frac{1}{\measure{I_{N-k-1}}} \int_{I_{N-k-1}} z\, \mathdm(z)\right),
   \end{split}
  \end{equation}
  where for $0 \leq j \leq N-1$ we denote by $I_j$ the arc in $\cF_j$ that contains
  $\xi^{\nu^{N-j}}$ or, in other words,
  the image of the arc $I \in \cF_N$ that contains $\xi$ by the mapping $z^{\nu^{N-j}}.$
  In particular, observe that for $1 \leq j \leq N-1$ the arc $I_{j-1}$ is
  the unique arc in $\cF_{j-1}$ that contains $I_j.$

  Next, we estimate the differences in the last sum of~\eqref{eq:IncrementExpression}.
  Take an arc $I \in \cF_j$ with $1 \leq j \leq N-1$ and
  denote by $J$ the unique arc in $\cF_{j-1}$ such that $I \subseteq J.$
  We start by expressing
  \begin{equation}
   \label{eq:DifferentGenerationsAverageExpression}
   \begin{split}
    \frac{1}{\measure{I}} &\int_{I} z\, \mathdm(z)
    - \frac{1}{\measure{J}} \int_{J} z\, \mathdm(z)\\
    &= \frac{1}{\measure{J}}
    \sum_{l=1}^{\nu} \left(\int_I z\, \mathdm(z) - \int_{I_l} z\, \mathdm(z)\right),
   \end{split}
  \end{equation}
  where $I_l,$ $1 \leq l \leq \nu,$ are all the arcs in $\cF_j$ contained in $J.$
  Since all arcs in consideration have length less than $1,$
  for each such arc $K$ we can define its anticlockwise extreme point $z_+(K)$ and
  its clockwise extreme point $z_-(K).$
  Thus, for such an arc $K$ it happens that
  \begin{equation*}
   \int_K z\, \mathdm(z) = \frac{-i}{2\pi} (z_+(K)-z_-(K)).
  \end{equation*}
  Let us denote by $z_T$ the complex number parallel to the tangent to $\ddisk$
  at point $z_-(J)$ (oriented in the anticlockwise sense) with $|z_T| = \measure{I}.$
  Then, if we enumerate the arcs $I_l$ consecutively starting from $z_-(J),$
  elementary trigonometry shows that
  \begin{equation}
   \label{eq:RightAngleApprox}
   \begin{split}
    |z_+(I_l) - (z_-(J)+lz_T)| \leq C (\measure{I})^2,\\
    |z_-(I_l) - (z_-(J)+(l-1)z_T)| \leq C (\measure{I})^2,
   \end{split}
  \end{equation}
  for some constant $C > 0$ independent of $j$ and $I$ and for each $1 \leq  l \leq \nu.$
  Hence, for each $1 \leq l \leq \nu,$ we find that
  \begin{equation}
   \label{eq:SameGenerationAverageComparison}
   \left|\int_I z\, \mathdm(z) - \int_{I_l} z\, \mathdm(z)\right|
   \leq |z_+(I) - z_-(I) - z_+(I_l) + z_-(I_l)| \leq C \cdot \measure{I}^2,
  \end{equation}
  where in the last inequality we have used~\eqref{eq:RightAngleApprox} and
  where $C > 0$ denotes an absolute positive constant possibly different from
  the one appearing in~\eqref{eq:RightAngleApprox}.
  Joining~\eqref{eq:DifferentGenerationsAverageExpression}
  and~\eqref{eq:SameGenerationAverageComparison} we get that
  \begin{equation}
   \label{eq:AverageDifferencesDecay}
   \left|\frac{1}{\measure{I}} \int_{I} z\, \mathdm(z)
       - \frac{1}{\measure{J}} \int_{J} z\, \mathdm(z)\right|
   \leq C \nu \frac{(\measure{I})^2}{\measure{J}}
   = C \nu^{-j}.
  \end{equation}

  Now, take
  \begin{equation*}
   c = \left|\frac{1}{\measure{I}} \int_{I} z\, \mathdm(z)\right|,
  \end{equation*}
  where $I$ is any arc in $\cF_0$
  (it is easy to see that the result does not depend on the choice of $I,$
  only on its length and, hence, on the choice of $\nu$).
  Then, expression~\eqref{eq:IncrementExpression} gives that
  \begin{equation*}
   \begin{split}
    \big||\Delta M_N| &- c|a_N|\big|
    \leq \left|\Delta M_N - a_N \frac{1}{\measure{I_0}} \int_{I_0} z\, \mathdm(z)\right|\\
    &\leq \sum_{k=1}^{N-1} |a_k|
    \left|\frac{1}{\measure{I_{N-k}}} \int_{I_{N-k}} z\, \mathdm(z)
    - \frac{1}{\measure{I_{N-k-1}}} \int_{I_{N-k-1}} z\, \mathdm(z)\right|.
   \end{split}
  \end{equation*}
  So, using the assumptions that $|a_n| \leq 1$ and $a_k = 0$ for $N-m \leq k \leq N-1$
  together with estimate~\eqref{eq:AverageDifferencesDecay}, we get that
  \begin{equation*}
   \big||\Delta M_N| - c|a_N|\big| \leq C \sum_{j=m+1}^\infty \nu^{-j}
   \leq C \nu^{-m},
  \end{equation*}
  where the last instance of $C$ denotes a possibly different positive constant,
  although still an absolute one, as we wanted to see.
 \end{proof}

 Let us denote the $\Lp{2}$ norm of $M_N$ by
 \begin{equation*}
  \sigma(N) \coloneq \left(\int_{\ddisk} |M_N(\xi)|^2\, \mathdm(\xi)\right)^{1/2}.
 \end{equation*}
 Recall also that, since the increments of the martingale are $\Lp{2}$ orthogonal and
 in our construction $M_0 = 0,$ we have that
 \begin{equation*}
  \sigma(N) = \left(\sum_{k=1}^N \int_{\ddisk} |\Delta M_k(\xi)|^2\, \mathdm(\xi)\right)^{1/2}.
 \end{equation*}
 In the proof of Theorem~\ref{thm:Optimality}, we will construct sequences $\{a_n\}$
 in which the nonzero coefficients will be separated by several null ones.
 The amount of null coefficients between two nonzero ones will be chosen so that,
 if $|a_n| > 0,$ then by Lemma~\ref{lemma:IncrementsWithNullCoefficients}
 we have $|\Delta M_n(\xi)| \simeq |a_n|,$
 with the comparability constants independent of $n.$
 Furthermore, the nonzero coefficients will be chosen to be such that
 $\sigma(N) \longrightarrow \infty$ as $N \to \infty.$
 The reason for this choice is the following lemma about $\Lp{2}$ divergent martingales
 that resemble a random walk,
 which relates the probability of the martingale to remain inside a certain boundary
 with the divergence of $\sigma(N)^2.$
 Here, for an increasing (possibly finite) sequence of positive integers $\tau = \{n_l\},$
 denote
 \begin{equation*}
    \overline{\sigma}(N,\tau) = \overline{\sigma}(N,\{n_l\}) = \left(\sum_{n_l\colon n_l \leq N}
                                \int_{\ddisk} |\Delta M_{n_l}(\xi)|^2\, \mathdm(\xi)\right)^{1/2}.
 \end{equation*}
 Also for later convenience,
 given a sequence $\tau = \{n_l\}$ of positive integers as before and a positive integer $k,$
 define
 \begin{equation*}
  \overline{N}(k,\tau) \coloneq \inf \{N \geq 1\colon
                                \overline{\sigma}(N,\tau)^2 \geq k\},
 \end{equation*}
 with the convention that $\inf \emptyset = \infty.$
 Whenever the sequence $\tau$ is clear from the context, we will omit it
 and just denote $\overline{\sigma}(N)$ and $\overline{N}(k).$

 \begin{lemma}
  \label{lemma:BoundaryProbabilityDecay}
  Consider a martingale $\{M_n\}$ which is $\Lp{2}$ divergent
  with $|\Delta M_n|^2 \leq K$ for some $K > 0.$
  Moreover, assume that there is an increasing sequence $\tau = \{n_l\}$ of positive integers
  and a sequence $\{a_l\}$ of complex numbers with $\sum_l |a_l|^2 = \infty$
  and such that $C_0 |\Delta M_{n_l}(\xi)|^2 \geq |a_l|^2$ for $l \geq 1,$
  with the comparability constant $C_0 \geq 1$ independent of $l.$
  For each $R > 0,$ there exist constants $C = C(C_0,R,K) > 0$ and $0 < c = c(R,K) < 1$ so that
  \begin{equation*}
   \measure{\set{\xi \in \ddisk\colon \max_{1 \leq n \leq \overline{N}(k,\tau)} |M_n(\xi)| \leq R}}
   \leq C c^k
  \end{equation*}
  for all integer $k \geq 1.$
 \end{lemma}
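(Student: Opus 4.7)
The plan is to obtain geometric decay via a recursive exit-time estimate. I partition the deterministic indices $\overline{N}(k,\tau)$ into super-blocks of length $L = L(R,K,C_0)$ (to be chosen below), write $T_j^\ast \coloneq \overline{N}(jL,\tau)$, and set $A_k \coloneq \set{\xi \in \ddisk \colon \max_{1 \leq n \leq \overline{N}(k,\tau)}|M_n(\xi)| \leq R}$. The core assertion to prove is that there exists $\delta = \delta(R,K,C_0) > 0$ with
\begin{equation*}
 \measure{A_{jL}} \leq (1-\delta)\measure{A_{(j-1)L}}, \qquad j \geq 1.
\end{equation*}
Iterating this bound and writing a general $k$ as $k = jL+r$ with $0 \leq r < L$ yields $\measure{A_k} \leq \measure{A_{jL}} \leq (1-\delta)^j \leq Cc^k$ with $c = (1-\delta)^{1/L}$, which is the statement of the lemma.

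The recursive inequality reduces to the conditional statement that, on the event $A_{(j-1)L}$ and conditional on $\mathcal{F}_{T_{j-1}^\ast}$, the martingale exits $\overline{B}(0,R)$ during $(T_{j-1}^\ast, T_j^\ast]$ with probability at least $\delta$. Introducing the shifted martingale $Y_n \coloneq M_{T_{j-1}^\ast + n} - M_{T_{j-1}^\ast}$, which starts at $0$, and using $|M_{T_{j-1}^\ast}| \leq R$ on $A_{(j-1)L}$, the task reduces to showing the anti-concentration estimate
\begin{equation*}
 P\left(\sup_{0 \leq n \leq T_j^\ast - T_{j-1}^\ast}|Y_n| > 2R \,\Big|\, \mathcal{F}_{T_{j-1}^\ast}\right) \geq \delta \qquad \text{a.s. on } A_{(j-1)L}.
\end{equation*}
For this I plan to apply a Paley--Zygmund style argument to $|Y_{T_j^\ast-T_{j-1}^\ast}|^2$. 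The hypothesis $|\Delta M_n|^2 \leq K$ forces each single-step jump of $\overline{\sigma}^2$ to be at most $K$, so the definition of $\overline{N}$ yields $\overline{\sigma}(T_j^\ast)^2 - \overline{\sigma}(T_{j-1}^\ast)^2 \geq L - K$, which is the sought lower bound on the $L^2$-mass of $Y$. The fourth moment $E|Y|^4$ is controlled by the Burkholder--Davis--Gundy inequality combined with the pointwise bound $|\Delta M_n|^2 \leq K$, and Doob's maximal inequality upgrades the Paley--Zygmund estimate on $|Y_{T_j^\ast-T_{j-1}^\ast}|$ to one on $\sup_n |Y_n|$. Choosing $L$ large enough in terms of $R$, $K$, $C_0$ so that $L-K$ dominates a sufficiently large multiple of $R^2$ then delivers the required $\delta > 0$.

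The main technical obstacle is that the lower bound $E|Y|^2 \geq L-K$ is inherently an unconditional $L^2$-statement, whereas the recursive step requires an $\mathcal{F}_{T_{j-1}^\ast}$-conditional a.s. lower bound on the conditional second moment of $Y$. This is precisely where the pointwise hypothesis $C_0|\Delta M_{n_l}|^2 \geq |a_l|^2$ becomes indispensable: it survives conditioning and forces $E[|\Delta M_{n_l}|^2 \mid \mathcal{F}_{T_{j-1}^\ast}] \geq |a_l|^2/C_0$ almost surely, so the conditional quadratic variation accumulated over the super-block is bounded below a.s. by $C_0^{-1}\sum_{n_l \in (T_{j-1}^\ast, T_j^\ast]}|a_l|^2$. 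Extracting a deterministic lower bound on this last sum in terms of $L$ alone requires a delicate accounting, separating super-blocks dominated by a few jumps with $|a_l|^2$ comparable to $K$ from super-blocks composed of many jumps with tiny $|a_l|^2$; this accounting --- rather than the martingale machinery itself --- is what I expect to be the most delicate part of the argument and what ultimately fixes the dependence of the final constants $C$ and $c$ on $C_0$, $R$ and $K$.
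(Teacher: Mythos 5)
Your skeleton coincides with the paper's: chop the $\overline{\sigma}^2$-timeline into super-blocks of a fixed $\overline{\sigma}^2$-increment, prove a per-block exit estimate that is uniform over the cells of the stopped filtration, and iterate. You also correctly identify why the pointwise hypothesis $C_0|\Delta M_{n_l}|^2 \ge |a_l|^2$ is indispensable: it is the only ingredient that survives restriction to a $\nu$-adic arc, and it is what turns the unconditional $\Lp{2}$-divergence into a conditional one.

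Where you diverge from the paper, and where a genuine gap appears, is in the engine you propose for the per-block exit probability. Paley--Zygmund applied to $|Y_{T_j^\ast - T_{j-1}^\ast}|^2$ needs a fourth-moment control, and the Burkholder--Davis--Gundy bound $E|Y|^4 \lesssim E\bigl[\langle Y\rangle^2\bigr]$ with $\langle Y\rangle = \sum_{n \in \text{block}}|\Delta M_n|^2$ is not usable here: the only pointwise estimate on $\langle Y\rangle$ supplied by the hypotheses is $\langle Y\rangle \le K\left(\overline{N}(jL,\tau) - \overline{N}((j-1)L,\tau)\right)$, and nothing bounds the \emph{time}-length of a super-block. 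The definition of $\overline{N}$ controls only the accumulated $\int_{\ddisk}|\Delta M_{n_l}|^2\,\mathdm$ along $\tau$; it says nothing about how many indices outside $\tau$ fall inside the block nor how large their increments are (beyond $\le K$). Consequently the Paley--Zygmund ratio $(E|Y|^2)^2 / E|Y|^4$ admits no uniform lower bound, and no choice of $L$ repairs this. (Doob's maximal inequality is also the wrong tool once you have a lower bound on $P(|Y_{T_j^\ast - T_{j-1}^\ast}|>2R)$: the events nest and no ``upgrade'' is required, and Doob in any case gives only upper bounds on the running maximum.)

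The paper avoids all of this by using the elementary reverse Kolmogorov-type inequality
\begin{equation*}
 \measure{\set{\xi\in\ddisk\colon \max_{1\le n\le N}|M_n(\xi)|\le R}} \le \frac{(R+K)^2}{\sigma(N)^2},
\end{equation*}
which needs only a \emph{lower} bound on the terminal second moment and is entirely insensitive to how long the super-block is or how much total quadratic variation it carries --- extra variance only helps. Concretely, $A(R,\overline{N}(m(k-1)))$ is a finite union of arcs $I \in \cF_{\overline{N}(m(k-1))}$; on each such $I$ the offset process $X_n \coloneq M_n - M_{\overline{N}(m(k-1))}$ is a martingale under the normalised Lebesgue measure of $I$, its increments remain orthogonal, and the pointwise hypothesis yields $\|X_{\overline{N}(mk)}\|_{\Lp{2}(I)}^2 \ge C_0^{-1}(m-K)$. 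Applying the displayed inequality on $I$ with $2R$ in place of $R$ and summing over all such arcs gives the iterate $\measure{A(R,\overline{N}(mk))} \le \delta\,\measure{A(R,\overline{N}(m(k-1)))}$ once $m$ is chosen so that $C_0(2R+K)^2/(m-K)\le\delta<1$. There is, in particular, no ``delicate accounting'' distinguishing blocks with few large jumps from blocks with many small ones: the definition of $\overline{N}$ already delivers the $\Lp{2}$-increment in one stroke. So the high-level outline in your proposal is the paper's, but the per-block tool is both heavier than necessary and, as stated, not valid; replace Paley--Zygmund and BDG by this one-sided maximal inequality and the two arguments coincide.
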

 \begin{proof}
  Let us denote
  \begin{equation*}
   A(R,N) = \set{\xi \in \ddisk\colon \max_{1 \leq n \leq N} |M_n(\xi)| \leq R}
  \end{equation*}
  and observe that these sets are decreasing with $N.$
  We will use the maximal inequality
  \begin{equation}
   \label{eq:KolmogorovReverseInequality}
   \measure{\set{\xi \in \ddisk\colon \max_{1 \leq n \leq N} |M_n(\xi)| \leq R}}
   \leq \frac{(R+K)^2}{\sigma(N)^2},
  \end{equation}
  which can be deduced by basic properties of dyadic martingales and stopping times
  (see for instance~\cite[Section~4.4]{ref:DurrettProbabilityTheory}).
  
  First, choose an integer $m = m(R,K) > K$ such that
  \begin{equation*}
   \frac{C_0(2R+K)^2}{m-K} \leq \delta < 1.
  \end{equation*}
  Clearly $\sigma(\overline{N}(m,\tau))^2 \geq \overline{\sigma}(\overline{N}(m,\tau),\tau)^2 \geq m,$
  so by~\eqref{eq:KolmogorovReverseInequality} we have that
  \begin{equation*}
   \measure{A(R,\overline{N}(m,\tau))} \leq \frac{(R+K)^2}{\sigma(\overline{N}(m,\tau))^2} \leq \delta < 1.
  \end{equation*}
  We use the notation $\overline{N}(j)$ as the sequence $\tau$ is fixed.
  Now, assume that we have shown that
  \begin{equation*}
   \measure{A(R,\overline{N}(m(k-1)))} \leq \delta^{k-1}
  \end{equation*}
  for some integer $k > 1.$
  Note that we have $\overline{N}(m(k-1)) < \overline{N}(mk)$ because $m > K.$
  Thus, it holds that
  \begin{equation*}
   \begin{split}
    A(R,&\overline{N}(mk)) \subseteq A(R,\overline{N}(m(k-1)))\\
    &\cap
    \set{\xi \in \ddisk\colon
        \max_{\overline{N}(m(k-1)) \leq n \leq \overline{N}(mk)} |(M_n-M_{\overline{N}(m(k-1))})(\xi)| \leq 2R}.
   \end{split}
  \end{equation*}
  Next, we show that the measure of the intersection in the right-hand side
  of the previous expression
  is bounded by
  \begin{equation}
      \label{eq:IntersectionBound}
      C_0\, \measure{A(R,\overline{N}(m(k-1)))} \frac{(2R+K)^2}{m-K}.
  \end{equation}
  Observe that the set $A(R,\overline{N}(m(k-1)))$
  is a possibly empty collection of arcs of $\cF_{\overline{N}(m(k-1))}.$
  If it is empty, there is nothing to prove, so we assume that at least it contains an arc.
  Consider $I$ to be one of these arcs.
  Also consider the martingale $\{X_n\}_{n=\overline{N}(m(k-1))}^\infty$ defined by
  $X_n (\xi) = M_n(\xi)-M_{\overline{N}(m(k-1))}(\xi)$ restricted to $I.$
  Because of the $\Lp{2}$ orthogonality of martingale increments
  and the hypothesis on $|\Delta M_n|$ for $n \in \tau,$ we can see that
  \begin{equation*}
    \begin{split}
      \|X_{\overline{N}(mk)}\|_{2}^2
      &\geq C_0^{-1} (\overline{\sigma}(\overline{N}(mk))^2-\overline{\sigma}(\overline{N}(m(k-1)))^2)\\
      &\geq C_0^{-1} (m-K)
    \end{split}
  \end{equation*}
  where the $\Lp{2}$ norm is computed with respect to the normalised Lebesgue measure on $I.$
  Now, by applying the maximal inequality~\eqref{eq:KolmogorovReverseInequality}
  to the martingale $\{X_n\}$ and using the previous bound we find that
  \begin{equation*}
   \begin{split}
    \measure{\set{\xi \in I\colon
                  \max_{\overline{N}(m(k-1)) \leq n \leq \overline{N}(mk)} |(M_n-M_{\overline{N}(m(k-1))})(\xi)| \leq 2R}}\\
    \leq C_0 \frac{(2R+K)^2}{m-K} \measure{I}.
   \end{split}
  \end{equation*}
  Hence, if we sum over the arcs of $A(R,\overline{N}(m(k-1)))$ we obtain~\eqref{eq:IntersectionBound}.
  Therefore, we find out that
  \begin{equation*}
    \measure{A(R,\overline{N}(mk))}
    \leq C_0\, \measure{A(R,\overline{N}(m(k-1)))} \frac{(2R+K)^2}{m-K}
    \leq \delta^{k}.
  \end{equation*}
  Finally, using that the sets $A(R,N)$ decrease on $N,$
  if we choose $C = \delta^{-m}$ and $c = \delta^{1/m},$ we get that
  \begin{equation*}
   \measure{A(R,\overline{N}(k))} \leq C c^{k}.
  \end{equation*}
 \end{proof}

 \begin{proof}[Proof of Theorem~\ref{thm:Optimality}]
  For a fixed measure function $\varphi$ and an integer $\nu \geq 2,$
  we will show how to construct sequences $\{a_n\}$ of complex numbers tending to zero,
  which are not summable and such that $\hausdorff{\varphi}{A(w)} = 0$
  for any $w \in \complex.$
  First, observe that
  \begin{equation*}
   A(w) \subseteq \set{\xi \in \ddisk\colon \limsup_{N \to \infty} |F_N(\xi)| < \infty}.
  \end{equation*}
  Therefore, once our sequence is constructed, if we show that
  \begin{equation*}
   B(R) \coloneq \set{\xi \in \ddisk\colon \limsup_{N \to \infty} |F_N(\xi)| < R}
  \end{equation*}
  has $\hausdorff{\varphi}{B(R)} = 0$ for all $R > 0,$
  we will have the desired result.
  By the properties of the martingale $\{M_n\}$ defined by~\eqref{eq:MartingaleDefinition},
  this is equivalent to see that the sets
  \begin{equation*}
   A(R) \coloneq \set{\xi \in \ddisk\colon \sup_{n \geq 0} |M_n(\xi)| \leq R}
  \end{equation*}
  satisfy
  \begin{equation*}
   \hausdorff{\varphi}{A(R)} = 0
  \end{equation*}
  for all $R > 0.$

  Let $C > 0$ and $0 < c = c(\nu) < 1$ be the constants in Lemma~\ref{lemma:IncrementsWithNullCoefficients}
  and take an integer $m_0$ such that
  \begin{equation*}
   c_0 \coloneq c - \frac{C}{\nu^{m_0-1}} > 0.
  \end{equation*}
  Also define the sequence $\{t_l\}_{l=0}^\infty$ as follows.
  Set $t_0 = 1$ and for each $l \geq 1$ choose $0 < t_l < t_{l-1}$ so that
  \begin{equation*}
   \psi(t) \leq t^{-\nu^{-3(l+1)}}
  \end{equation*}
  for all $t \leq t_l.$

  We are ready to construct a sequence $\{a_n\}$ so that $\hausdorff{\varphi}{A(w)} = 0$
  for any $w \in \complex.$
  At the same time we will construct an increasing sequence $\tau$ of positive integers
  which will contain those indices $n$ for which $a_n \neq 0.$
  Consider $\widetilde{N}_1$ the first integer such that
  \begin{equation*}
   \nu^{-\widetilde{N}_1} \leq t_1
  \end{equation*}
  and let $\tau$ be the empty sequence.
  Now, we will choose $a_n$ such that
  \begin{equation}
   \label{eq:FirstSizeCondition}
   \begin{cases}
     1 \geq |a_n| \geq \nu^{-1} &\text{ if } n \equiv 1 \pmod{m_0+1}\\
     a_n = 0  &\text{ otherwise }
   \end{cases},
  \end{equation}
  for $1 \leq n \leq N_1$ to be determined according to the following procedure.
  Start fixing $a_1$ satisfying~\eqref{eq:FirstSizeCondition}
  and append $1$ to $\tau$ (so at the moment $\tau = (1)$).
  If $|a_1| = 1$ and $\widetilde{N}_1 = 1,$
  we set $N_1' = 1$ and proceed to fix $N_1$ as explained below.
  Otherwise, we continue fixing coefficients according to rule~\eqref{eq:FirstSizeCondition} and,
  whenever we are choosing a nonzero coefficient, we append its index to the sequence $\tau.$
  We repeat this process until we have fixed $N_1'$ coefficients,
  where $N_1'$ is the first integer $N_1' \geq \widetilde{N}_1$ such that
  \begin{equation*}
   \sum_{n=1}^{N_1'} |a_n| \geq 1.
  \end{equation*}
  Next, we keep fixing coefficients according to rule~\eqref{eq:FirstSizeCondition}
  and appending the indices of nonzero coefficients to the sequence $\tau$
  until we have chosen a total of $N_1$ of them, where $N_1$ is the first integer $N_1 \geq N_1'$ such that
  \begin{equation*}
      \overline{\sigma}(N_1,\tau)^2 \geq \left\lceil \overline{\sigma}(N_1',\tau)^2 \right\rceil.
  \end{equation*}
  Now, denote $k_1 = \lfloor\overline{\sigma}(N_1)^2\rfloor,$
  so that $\overline{N}(k_1) = N_1$ because of the minimality of $N_1$ and the fact that $|a_n| \leq 1$
  (we also set $k_0 = 0$ for completeness).

  Observe that, except for $n = 1,$ if $n \in \tau$ then $a_k = 0$ for $n-m_0 \leq k \leq n-1.$
  Thus, by Lemma~\ref{lemma:IncrementsWithNullCoefficients} we have that
  \begin{equation}
   \label{eq:FirstIncrementsEstimate}
   |\Delta M_n| \geq c|a_n| - C\nu^{-m_0} \geq \left(c-\frac{C}{\nu^{m_0-1}}\right) |a_n|
   = c_0 |a_n|
  \end{equation}
  for $n \in \tau$ with $1 \leq n \leq N_1.$
  Note as well that, since $|a_n| \geq \nu^{-1}$ for $n \equiv 1 \pmod{m_0+1}$ and because of
  estimate~\eqref{eq:FirstIncrementsEstimate},
  for each $1 \leq N \leq N_1$ we have that
  \begin{equation*}
   \overline{\sigma}(N,\tau)^2 \geq \frac{c_0^2 \nu^{-2}}{m_0+2} N,
  \end{equation*}
  which implies
  \begin{equation*}
   k \geq \frac{c_0^2 \nu^{-2}}{2(m_0+2)} \overline{N}(k,\tau)
  \end{equation*}
  for each $1 \leq k \leq k_1.$
  
  Assume now that we have already determined $k_{l-1}$ and $N_{l-1}$
  and that we have fixed $a_n$ for $n \leq N_{l-1}$ for some integer $l \geq 2.$
  Also assume that $\tau$ has been set to be the increasing sequence
  of positive integers $n \leq N_{l-1}$ for which $a_n \neq 0.$
  Lastly suppose that with all these parameters fixed, it holds that
  \begin{equation}
   \label{eq:RangeToBoundPsi}
   \nu^{-N_{l-1}} \leq t_{l-1}
  \end{equation}
  and $\overline{N}(k_{l-1},\tau) = N_{l-1}.$
  If it holds that
  \begin{equation*}
   \nu^{-N_{l-1}} \leq t_l,
  \end{equation*}
  fix $N_l = N_{l-1},$ $k_l = k_{l-1}$ and continue to the next step
  without fixing any of the coefficients $a_n.$
  Otherwise, if it is the case that $\nu^{-N_{l-1}} > t_l,$
  set $\widetilde{N}_l > N_{l-1}$ to be the first integer such that
  \begin{equation*}
   \nu^{-\widetilde{N}_{l}} \leq t_l.
  \end{equation*}
  Next, we choose $a_n$ for $N_{l-1} < n \leq N_l,$ where $N_l$ is to be determined, so that
  \begin{equation}
   \label{eq:CoefficientSizeEstimate}
   \begin{cases}
     \nu^{-l+1} \geq |a_n| \geq \nu^{-l} &\text{ if } n-N_{l-1} \equiv 0 \pmod{m_0+l}\\
     a_n = 0 &\text{ otherwise }
   \end{cases}
  \end{equation}
  according to the following procedure.
  First, we choose $a_{N_{l-1}+1}$ satisfying~\eqref{eq:CoefficientSizeEstimate}.
  If $\widetilde{N}_l = N_{l-1}+1$ and
  \begin{equation}
   \label{eq:SquareSumEstimate}
   \sum_{n=1}^{\widetilde{N}_l} |a_n| \geq l,
  \end{equation}
  we fix $N_l' = N_{l-1}+1$ and proceed to fix $N_l$ as explained below.
  Otherwise, we choose the next coefficient according to rule~\eqref{eq:CoefficientSizeEstimate}.
  In addition, every time we fix a nonzero coefficient, we append its index to the sequence $\tau.$
  We repeat this process until we have fixed up to coefficient $a_{N_l'},$
  with $N_l'$ being the first integer $N_l' \geq \widetilde{N}_l$ such that
  \begin{equation*}
   \sum_{n=1}^{N_l'} |a_n| \geq l.
  \end{equation*}
  Then we continue fixing coefficients according to rule~\eqref{eq:CoefficientSizeEstimate}
  and appending the indices of the nonzero ones to the sequence $\tau$
  until we have fixed a total of $N_l,$
  where $N_l$ is the first integer $N_l \geq N_l'$ such that
  \begin{equation}
      \label{eq:VarianceIncrement}
      \overline{\sigma}(N_l,\tau)^2 \geq \left\lceil \overline{\sigma}(N_l',\tau)^2 \right\rceil
  \end{equation}
  and denote $k_l = \lfloor\overline{\sigma}(N_l)^2\rfloor,$ so that $\overline{N}(k_l) = N_l.$
  
  Observe that, for $N_{l-1} < n \leq N_l,$ if we have $|a_n| > 0,$
  then $a_k = 0$ for $n-(m_0+l-1) \leq k \leq n-1.$
  Therefore, by Lemma~\ref{lemma:IncrementsWithNullCoefficients} we have
  \begin{equation}
   \label{eq:IncrementSizeEstimate}
   |\Delta M_n| \geq c|a_n| - C\nu^{-m_0-l+1}
   \geq \left(c-\frac{C}{\nu^{m_0-1}}\right) |a_n| = c_0 |a_n|
  \end{equation}
  for $n \in \tau$ with $N_{l-1} < n \leq N_l.$
  Take into account that, because of~\eqref{eq:CoefficientSizeEstimate}
  and~\eqref{eq:IncrementSizeEstimate},
  for each $N_{l-1} < N \leq N_l$ we have
  \begin{equation*}
   \overline{\sigma}(N,\tau)^2 \geq \frac{c_0^2 \nu^{-2l}}{m_0+l+1} N,
  \end{equation*}
  which in turn implies that for each $k_{l-1} < k \leq k_l$ it holds that
  \begin{equation}
   \label{eq:NToVarianceEstimate}
   k \geq \frac{c_0^2 \nu^{-2l}}{2(m_0+l+1)} \overline{N}(k,\tau).
  \end{equation}

  Notice that this procedure defines a sequence $\{a_n\}$ that satisfies
  the hypotheses of Theorem~\ref{thm:Optimality}.
  Indeed, since $\lim_{t \to 0^+} \psi(t) = \infty$ but $\lim_{t \to 0^+} \psi(t)/t^{-s} = 0$
  for every $s > 0,$ we have that the sequence $\{t_l\}$ tends to zero as $l \to \infty.$
  Therefore, by~\eqref{eq:RangeToBoundPsi}
  the sequence $\{N_l\}$ tends to infinity,
  which implies that at some point we define $a_n$ for every integer $n \geq 1.$
  Moreover, condition~\eqref{eq:CoefficientSizeEstimate} implies that
  $|a_n|$ tends to zero as $n \to \infty.$
  Furthermore, requirement~\eqref{eq:SquareSumEstimate} implies that
  $\sum_n |a_n| = \infty.$
  In addition, take into account that it also happens that
  $\sum_n |a_n|^2 = \infty$ because of requirement~\eqref{eq:VarianceIncrement}
  during the construction, even though this is not necessary
  for Theorem~\ref{thm:Optimality}.
  In particular, the sequence $\{k_l\}$ also tends to infinity.
  Lastly, this procedure also yields the infinite increasing sequence $\tau$
  of indices $n$ for which $a_n \neq 0.$

  Now that we have constructed a suitable sequence $\{a_n\},$
  we show that for this particular choice it happens that
  $\hausdorff{\varphi}{A(R)} = 0.$
  It is clear that $A(R) \subseteq A(R,N)$ for every positive integer $N,$ where
  \begin{equation*}
   A(R,N) \coloneq \set{\xi \in \ddisk\colon \max_{1 \leq n \leq N} |M_n(\xi)| \leq R}.
  \end{equation*}
  Observe that, because $|a_n| \leq 1$ for all $n \geq 1,$
  there is $K > 0$ such that $|\Delta M_n|^2 \leq K$ for all $n \geq 1$
  (for example, one can use Lemma~\ref{lemma:IncrementsWithNullCoefficients} to see this).
  Also, estimate~\eqref{eq:IncrementSizeEstimate} shows that there is $C_0 \geq 1$
  such that $C_0 |\Delta M_n(\xi)|^2 \geq |a_n|^2$ whenever $n \in \tau.$
  Hence, because of Lemma~\ref{lemma:BoundaryProbabilityDecay},
  there exist constants $C = C(C_0,R,K) > 0$ and $0 < c = c(R,K) < 1$ such that
  \begin{equation*}
   \measure{A(R,\overline{N}(k))} \leq C c^k
  \end{equation*}
  for every integer $k \geq 1.$
  Therefore, for a given positive integer $k$ we can cover the set
  $A(R,\overline{N}(k)) \supseteq A(R)$ using at most $C' c^k \nu^{\overline{N}(k)}$ arcs
  of length $\nu^{-\overline{N}(k)},$ for a fixed positive constant $C'.$
  Thus, we get that
  \begin{equation}
   \label{eq:HausdorffEstimate}
   \hausdorff{\varphi}{A(R)} \leq C' c^k \nu^{\overline{N}(k)} \varphi(\nu^{-\overline{N}(k)})
   = C' c^k \psi(\nu^{-\overline{N}(k)}).
  \end{equation}
  We are only left with showing that the rightmost-hand side
  of~\eqref{eq:HausdorffEstimate} tends to zero as $k \to \infty.$
  Fix some integer $l \geq 2$ and consider $k_{l-1} < k \leq k_l.$
  It could be that for a particular value of $l,$ there is no $k$ in this range
  since it could happen that $k_{l-1} = k_l.$
  However, since the sequences $\{N_l\}$ and $\{k_l\}$ both tend to infinity,
  there are infinitely many values of $l$ for which $k_{l-1} < k_l.$
  Thus, we can assume that there exists $k_{l-1} < k \leq k_l.$
  For such values of $k,$ we have that $N_{l-1} < \overline{N}(k) \leq N_l,$
  so condition~\eqref{eq:RangeToBoundPsi} and the definition of $t_l$ gives that
  \begin{equation*}
   \psi(\nu^{-\overline{N}(k)}) \leq \nu^{\overline{N}(k)\nu^{-3l}}.
  \end{equation*}
  Moreover, for this range of values of $k$ we can use~\eqref{eq:NToVarianceEstimate}
  to get that
  \begin{equation}
   \label{eq:PsiEstimate}
   \psi(\nu^{-\overline{N}(k)}) \leq \nu^{2(m_0+l+1)\nu^{-l}k/c_0^2}.
  \end{equation}
  Therefore, inserting estimate~\eqref{eq:PsiEstimate} in~\eqref{eq:HausdorffEstimate}
  and letting $k \to \infty,$ so that $l \to \infty$ as well,
  concludes the proof.
 \end{proof}

    \printbibliography

    \Addresses

\end{document}